\documentclass[11pt]{amsart}
\usepackage[margin=1.1in]{geometry}
\usepackage{amsmath,amssymb,amsthm,mathtools}
\usepackage{graphicx,booktabs,enumitem}
\usepackage[hidelinks]{hyperref}
\hypersetup{pdftitle={Uniqueness of four-body convex central configurations for all masses},
  pdfauthor={Tejasvi Singh Tomar}}

\newtheorem{theorem}{Theorem}[section]
\newtheorem{lemma}[theorem]{Lemma}
\newtheorem{proposition}[theorem]{Proposition}
\newtheorem{corollary}[theorem]{Corollary}
\newtheorem{conjecture}[theorem]{Conjecture}
\theoremstyle{definition}

\theoremstyle{remark}
\newtheorem{remark}[theorem]{Remark}
\theoremstyle{plain}
\newtheorem*{thmA}{Theorem A}
\newtheorem*{thmB}{Theorem B}
\newtheorem*{corC}{Corollary C}
\newtheorem*{corD}{Corollary D}
\numberwithin{equation}{section}
\newcommand{\R}{\mathbb{R}}
\newcommand{\C}{\mathbb{C}}
\newcommand{\dr}{\mathrm{d}r}
\DeclareMathOperator{\tr}{tr}

\newcommand{\lmax}{\lambda_{\max}}
\newcommand{\cS}{\mathcal{S}}
\newcommand{\cC}{\mathcal{C}}
\newcommand{\cE}{\mathcal{E}}

\newcommand{\cZ}{\mathcal{Z}}
\newcommand{\cT}{\mathcal{T}}
\newcommand{\ip}[2]{\langle #1,#2\rangle}
\newcommand{\hA}{\widehat A}
\newcommand{\he}{\hat\eta}
\newcommand{\hw}{\hat w}

\begin{document}

\title[Uniqueness of four-body convex central configurations for all masses]{Uniqueness of four-body convex central\\ configurations for all masses}
\author{Tejasvi Singh Tomar}
\address{Independent researcher}
\renewcommand{\urladdrname}{{\itshape ORCID}}
\urladdr{\href{https://orcid.org/0000-0003-4668-3639}{https://orcid.org/0000-0003-4668-3639}}
\subjclass[2020]{70F10, 70F15, 58K05, 65G40}
\keywords{Central configurations, four-body problem, relative equilibria, uniqueness, nondegeneracy, Morse index, computer-assisted proof, interval arithmetic}

\begin{abstract}
We prove that for every choice of four positive masses and every cyclic ordering of the bodies there is exactly one strictly convex planar central configuration with that ordering, up to similarity. This answers Problem~10 in the list of Albouy, Cabral and Santos, which Santoprete calls the Simó--Yoccoz conjecture. The main step is a uniform lower bound for the Hessian at convex central configurations: it is at least one quarter of its radial part, which vanishes only on translations and rotations. Dziobek's relations turn the indefinite part of the Hessian into a negative multiple of a square, and a Cauchy--Schwarz argument bounds this term by the radial part times the trace of an explicit $2\times2$ matrix in which the masses do not appear. We prove that this trace is less than $\frac34$ by interval arithmetic on the three-dimensional set of normalized convex central configurations, in the coordinates of Corbera, Cors and Roberts. Uniqueness then follows by a covering argument from the case of four equal masses. The computation has been repeated with a second, independently written program. The Hessian bound and the uniqueness theorem, including the computation, have been formalized in the Lean proof assistant and checked by its kernel, using only the standard axioms. As consequences, every degenerate four-body central configuration is concave, the convex central configuration depends analytically on the masses, and the known symmetry theorems for kites, isosceles trapezoids and rhombi follow in a few lines. The analytic dependence and the symmetry theorems are also formalized.
\end{abstract}

\maketitle

\section{Introduction}\label{sec:intro}

\subsection{Central configurations}
Consider four bodies with masses $m_1,\dots,m_4>0$ and positions $q_1,\dots,q_4\in\R^2$. Write $q=(q_1,\dots,q_4)\in(\R^2)^4\cong\R^8$ and $r_{ij}=|q_i-q_j|$, and set
\[
U(q)=\sum_{i<j}\frac{m_im_j}{r_{ij}},\qquad M=\sum_i m_i,\qquad c=\frac1M\sum_i m_iq_i,\qquad I(q)=\sum_i m_i|q_i-c|^2 .
\]
A collision-free configuration $q$ is a \emph{central configuration} (CC) if
\begin{equation}\label{eq:cc}
\nabla_{q_i}U(q)+\lambda\,m_i(q_i-c)=0,\qquad i=1,\dots,4,
\end{equation}
for some $\lambda\in\R$. Pairing \eqref{eq:cc} with $q-c$ and using that $U$ is homogeneous of degree $-1$ and translation invariant shows that necessarily $\lambda=U(q)/I(q)>0$.
Central configurations are the configurations that admit homographic motions. In the plane, each CC generates a relative equilibrium of the Newtonian four-body problem, that is, a solution that rotates rigidly about the centre of mass. The set of CCs is invariant under the similarities of the plane (translations, rotations, dilations and reflections), and CCs are counted modulo similarity. General references are \cite{Moe15,Sma70}.

A planar four-body CC is either collinear or has no three bodies on a line (Lemma~\ref{lem:nocollinear}). In the second case it is either \emph{convex}, meaning that the bodies are the vertices of a strictly convex quadrilateral, or \emph{concave}, meaning that one body lies in the interior of the triangle formed by the other three. A convex configuration determines the cyclic order of the bodies along its boundary, up to reversal. Four labelled bodies have three such cyclic orders, represented by $(1234)$, $(1243)$ and $(1324)$.

MacMillan and Bartky \cite{MB32} proved that for every choice of masses and each cyclic order there is a convex CC with that cyclic order. Xia \cite{Xia04} gave a simpler proof, and a variational proof is given in \cite{Moe15}. Albouy, Fu and Sun conjectured that this convex CC is unique for every choice of positive masses \cite{AFS08}. Albouy, Cabral and Santos included the question in their list of open problems on the classical $n$-body problem, asking whether, for the Newtonian potential, the convex CC with a given cyclic order is unique up to similarity \cite[Problem~10]{ACS12}. At that time uniqueness was known when the two bodies at the ends of a diagonal have equal masses, while the case of two pairs of equal masses at adjacent vertices was still open. Santoprete \cite{San26} calls the expected positive answer the Simó--Yoccoz conjecture. The number of four-body CCs as a function of the masses was explored numerically by Simó \cite{Sim78}.

\subsection{Previous results}
Uniqueness had been established in the following cases.
\begin{itemize}[leftmargin=2em]
\item \emph{Equal masses.} Albouy \cite{Alb95,Alb96} proved that every CC of four equal masses has an axis of symmetry and classified them all. In particular the only convex one is the square.
\item \emph{Equal masses at the ends of a diagonal.} Here the convex CC is symmetric with respect to the other diagonal \cite{LS02,PCS07,AFS08}. Uniqueness then follows from Leandro's analysis of kite configurations \cite{Lea03}. Roberts \cite{Rob25} gave a new proof of the uniqueness of convex kites, based on the Poincaré--Hopf theorem.
\item \emph{Two pairs of equal masses at adjacent vertices.} Fernandes, Llibre and Mello \cite{FLM17} proved that the isosceles trapezoid is the unique convex CC; see also \cite{Xie12}.
\item \emph{Restricted classes.} Santoprete proved uniqueness among co-circular CCs \cite{San21a} and among trapezoidal CCs \cite{San21b}; these classes are studied in \cite{CR12,CCLP19}. Corbera, Cors and Roberts showed that a convex CC with perpendicular diagonals is a kite \cite{CCR18}.
\item \emph{Small masses.} Uniqueness holds when three of the masses are sufficiently small \cite{CCLM15}.
\item \emph{A region of masses.} Sun, Xie and You \cite{SXY23} proved uniqueness by interval arithmetic for $m_2=1$ the largest mass and $(m_1,m_3,m_4)\in[0.6,1]^3$.
\end{itemize}
Corbera, Cors and Roberts \cite{CCR19} described the set of convex CCs, normalized and ordered as in Section~\ref{sec:normal}, as a graph over an explicit three-dimensional domain. They expressed the hope that the normalized mass map on this domain is injective, which would prove uniqueness. Santoprete \cite{San26} decomposed the constrained Hessian at a planar CC into a positive semidefinite part and a signed part of rank two. Santoprete characterized definiteness by an explicit $2\times2$ eigenvalue problem, and pointed out that if all convex CCs with a given ordering were nondegenerate minima, Morse-theoretic arguments could yield uniqueness. That approach had been carried out within the co-circular and trapezoidal classes \cite{San21a,San21b}, and for kites in \cite{Rob25}. Sun, Xie and You \cite[Theorem~4.1]{SXY25} proved by interval arithmetic that rhombus CCs are nondegenerate, and Santoprete \cite[Proposition~7.1]{San26} showed that their Morse index is $0$.

Concave CCs exist for all masses \cite{Ham02}, but they are not unique. There are degenerate concave kites and fold bifurcations, and the number of concave kites for given masses can be $0$, $1$ or $2$ \cite{Rob25,LX25,LPRX26}. Hampton and Moeckel proved that the number of four-body CCs is finite \cite{HM06}; see \cite{AK12} for five bodies and \cite{Moe01} for Dziobek configurations.

\subsection{Results}
We say that a CC is \emph{nondegenerate} if its similarity class is a nondegenerate critical point of $UI^{1/2}$ on the shape space (Section~\ref{sec:shape}). This agrees with the definition used in \cite{Moe15,Pac87}.

\begin{thmA}
Let $m_1,\dots,m_4>0$. For each cyclic order of the four bodies there is exactly one convex central configuration with that cyclic order, up to similarity. It is a nondegenerate local minimum of $UI^{1/2}$ on the shape space, and it depends real-analytically on the masses.
\end{thmA}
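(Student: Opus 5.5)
The plan follows the abstract: first a uniform Hessian bound at convex central configurations, then a covering/continuation argument from the equal-mass case. Fix a cyclic order, say $(1234)$.

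\textbf{Step 1: Hessian bound.} Let $q$ be a convex CC. Write the Hessian of $U+\tfrac{\lambda}{2}I$ (equivalently of $UI^{1/2}$ on shape space) as a radial part $R$ plus a remainder. Here $R$ is positive semidefinite and vanishes only on translations and rotations; it comes from the terms $m_im_j r_{ij}^{-3}$ multiplied by the squared change in the component of $q_i-q_j$ along the edge direction. The remainder, coming from the tangential terms, is indefinite. Using Dziobek's relations $m_im_j(r_{ij}^{-3}-\lambda')=\sigma\,\Delta_k\Delta_l$ for a convex CC, I would rewrite the remainder as $-\bigl(\sum_{i<j}c_{ij}\,\delta_{ij}\bigr)^2$. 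In this expression the $\delta_{ij}$ are linear in the variation and the coefficients involve only the areas $\Delta_k$ and the distances.

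Cauchy--Schwarz with weights matched to $R$ then gives $\text{remainder}\ge -\tau(q)\,R$. Here $\tau(q)$ is the trace of an explicit $2\times2$ matrix built from the shape alone, with the masses cancelling. If $\tau<\tfrac34$ on every normalized convex CC, then $\mathrm{Hess}\ge\tfrac14 R$. This shows every convex CC is a nondegenerate local minimum of $UI^{1/2}$ on shape space.

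\textbf{Step 2: the trace bound, which I expect to be the main obstacle.} Proving $\tau<\tfrac34$ means bounding $\tau$ on the three-dimensional set of normalized convex CCs, parametrized in the Corbera--Cors--Roberts coordinates as a graph over an explicit domain. I would try interval arithmetic first. Subdivide the domain adaptively into boxes and evaluate an interval enclosure of $\tau$ on each, refining wherever the enclosure does not stay below $\tfrac34$.

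The delicate regions are near the boundary of the domain, where the configuration approaches a degenerate triangle or a collision. There I would derive asymptotic or monotonicity estimates by hand, or change coordinates so that $\tau$ extends continuously to the boundary with a margin. The result should be verified with an independent implementation.

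\textbf{Step 3: covering argument.} Let $\mathcal{M}$ be the space of normalized convex CCs with the given order, and let $\Phi:\mathcal{M}\to\Delta$ map a configuration to its normalized masses in the open $3$-simplex. Both spaces are $3$-dimensional. Nondegeneracy from Step~1 plus the implicit function theorem makes $\Phi$ a local diffeomorphism, and in fact real-analytic, since the CC equations are analytic.

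$\Phi$ is also proper. If the masses stay in a compact subset of $\Delta$, convex CCs cannot approach collisions or collinearity; this follows from the standard a priori bounds and Lemma~\ref{lem:nocollinear}. A proper local diffeomorphism onto the connected, simply connected set $\Delta$ is a finite covering, hence a diffeomorphism.

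Its degree can be read off at equal masses. Albouy's classification shows the square is the unique convex CC there, so the degree is $1$. Existence for all masses, due to MacMillan--Bartky, is then automatic, and uniqueness follows. Analytic dependence on the masses comes from the analytic inverse $\Phi^{-1}$.
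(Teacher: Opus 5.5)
Your outline follows the paper's proof. It uses the identity $Q=K-|\sigma|\,|L|^2$ from Dziobek's relations, a Cauchy--Schwarz majorant with mass-free weights $D_e=|\sigma|/\kappa_e$, a certified bound $\tr S<\frac34$ on $\cE$ with a blow-up chart at the end $b\to0$, and a covering argument anchored at the square.

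The step that does not go through as you justify it is properness, specifically the exclusion of collinear limits. Lemma~\ref{lem:nocollinear} only says that each CC is either collinear or has no three bodies on a line. It does not prevent convex CCs $q^n$ for masses $m^n\to m$ in the open simplex from converging to one of the collinear CCs, which exist for every $m$. The a priori bounds on $a,b,c$ in Lemma~\ref{lem:bounds} do not address this either.

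Shub's lemma, in a form uniform in the masses (Lemma~\ref{lem:shub}), gives a collision-free limit CC. You must then show that this limit is not collinear. Only after that do Lemma~\ref{lem:nocollinear} and continuity of the signs of the $A_l$ put the limit back among counterclockwise convex CCs with order $(1234)$.

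The paper does this in Lemma~\ref{lem:collinear} by applying Theorem~B a second time. At a collinear CC, displace the extreme body $k$ perpendicular to the line. This gives $K(v)=0$ and, by a Chebyshev-type sum inequality, $Q(v)=-m_k\sum_jm_j(s_{kj}-\lambda')<0$. But $Q_n(v)\ge\frac14K_n(v)\ge0$ along the sequence and $Q_n(v)\to Q(v)$, a contradiction.

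An elementary alternative also works. By Proposition~\ref{prop:convexsigns}(a), every diagonal of a convex CC is longer than every side. So the limit satisfies $\min(r_{13},r_{24})\ge\max(r_{12},r_{23},r_{34},r_{14})$. Four distinct collinear points cannot satisfy this. The unique farthest pair cannot be a side. If it is a diagonal, say $\{1,3\}$, then $q_2$ and $q_4$ lie strictly between $q_1$ and $q_3$, so $r_{24}<\max(r_{12},r_{14})$.

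A smaller point concerns Step~3. The space on which you run the covering argument must consist of similarity classes with fixed labels and fixed orientation, as in the paper's $\cZ\subset\cS^+\times\mathcal M$. It should not be the normalized set $\cE$ of Step~2. The Corbera--Cors--Roberts normalization relabels the bodies so that $q_1q_2$ is a longest side. As a result, the mass map on $\cE$ does not cover the simplex, and $\cE$ has boundary where two sides are equal. Those coordinates are only for the estimate. An arbitrary convex CC is brought into them by a relabeling and a similarity, which multiplies $Q$ and $K$ by the same positive factor, so $Q\ge\frac14K$ transfers back.
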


Up to orientation-preserving similarities there are exactly two such configurations, and they are mirror images of each other. Theorem~A rests on a quantitative nondegeneracy statement. For a CC $q$ let
\[
Q(v)=D^2\Bigl(U+\frac{\lambda}{2}I\Bigr)(q)[v,v],\qquad v=(v_1,\dots,v_4)\in(\R^2)^4,
\]
where $\lambda=U(q)/I(q)$ is held fixed, and let
\[
K(v)=\sum_{i<j}\frac{3m_im_j}{r_{ij}^3}\Bigl(\frac{\ip{q_i-q_j}{v_i-v_j}}{r_{ij}}\Bigr)^{2}.
\]
Thus $Q$ is the Hessian of the Lagrange function for the extremal problem ``$U$ restricted to a level set of $I$'', and $K\ge0$ is the part of $Q$ that comes from the second derivatives of the pair potentials in the radial directions (Lemma~\ref{lem:hessian}). By infinitesimal rigidity, $K(v)=0$ if and only if $v$ is an infinitesimal translation or rotation (Lemma~\ref{lem:rigid}).

\begin{thmB}
At every convex central configuration of four bodies,
\[
Q(v)\ \ge\ \tfrac14\,K(v)\qquad\text{for all }v\in(\R^2)^4 .
\]
In particular every convex central configuration is nondegenerate and has Morse index $0$.
\end{thmB}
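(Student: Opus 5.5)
The plan follows the outline in the abstract. First I will write out the Hessian of $U+\frac{\lambda}{2}I$ explicitly (Lemma~\ref{lem:hessian}). For each pair $i<j$ the second derivative of $m_im_j/r_{ij}$ in direction $v$ is
\[
\frac{m_im_j}{r_{ij}^3}\Bigl(3\frac{\ip{q_{ij}}{v_{ij}}^2}{r_{ij}^2}-|v_{ij}|^2\Bigr),
\]
where $q_{ij}=q_i-q_j$ and $v_{ij}=v_i-v_j$. This gives
\[
Q(v)=K(v)-\sum_{i<j}\frac{m_im_j}{r_{ij}^3}|v_{ij}|^2+\lambda\sum_i m_i|v_i-\bar v|^2 ,
\]
with $\bar v$ the mass-weighted mean of the $v_i$. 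Using $\lambda\sum_i m_i|v_i-\bar v|^2=\frac{\lambda}{M}\sum_{i<j}m_im_j|v_{ij}|^2$, the non-radial part becomes
\[
\sum_{i<j}m_im_j\Bigl(\frac{\lambda}{M}-r_{ij}^{-3}\Bigr)|v_{ij}|^2 .
\]
This part is indefinite.

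Next I will use Dziobek's relations for a planar four-body CC. They say $m_im_j(r_{ij}^{-3}-\frac{\lambda}{M})=\sigma\,\Delta_i\Delta_j$, where $\Delta_k$ are the signed areas of the triangles opposite body $k$ and $\sigma$ is a constant. For a convex configuration the sign pattern of the $\Delta_k$ fixes the sign of $\sigma$. Substituting, the non-radial term equals $-\sigma\sum_{i<j}\Delta_i\Delta_j|v_{ij}|^2$. Because $\sum_k\Delta_k=0$, the identity $\sum_{i<j}a_ia_j|x_i-x_j|^2=-\bigl|\sum_i a_ix_i\bigr|^2$ (valid when $\sum_i a_i=0$) turns this into $\sigma\bigl|\sum_k\Delta_kv_k\bigr|^2$. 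With the correct sign this is $-c\,\bigl|\sum_k\Delta_kv_k\bigr|^2$ for some $c>0$, a negative multiple of a square. I will check the sign carefully, since the statement would be trivial if it came out positive.

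The remaining task is to show $c\,\bigl|\sum_k\Delta_kv_k\bigr|^2\le\frac34K(v)$. The vector $w=\sum_k\Delta_kv_k\in\R^2$ vanishes on translations and rotations, because $\sum_k\Delta_k=0$ and $\sum_k\Delta_kq_k=0$. So each component $\ip{e}{w}$ is a linear functional on the quotient by rigid motions. By infinitesimal rigidity (Lemma~\ref{lem:rigid}), the six edge elongations $\ip{q_{ij}}{v_{ij}}/r_{ij}$ determine $v$ modulo rigid motions. Hence I can write $\ip{e}{w}=\sum_{i<j}\alpha^e_{ij}\ip{q_{ij}}{v_{ij}}/r_{ij}$, with coefficients chosen among all valid representations (the six elongations satisfy one linear relation, the stress of $K_4$; Dziobek's equations make this stress proportional to the masses term). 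Cauchy--Schwarz with weights $3m_im_j/r_{ij}^3$ then gives
\[
\ip{e}{w}^2\le K(v)\sum_{i<j}\frac{r_{ij}^3(\alpha^e_{ij})^2}{3m_im_j}.
\]
Summing over an orthonormal basis $e$ bounds $c|w|^2$ by $K(v)$ times the trace of a $2\times2$ Gram-type matrix. Using Dziobek's relations once more to eliminate $m_im_j$ in favour of $\Delta$'s and $r$'s, the masses should cancel, leaving a function $T$ of the shape alone.

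The main obstacle is proving $T<\frac34$ uniformly over all normalized convex CCs. The set is three-dimensional: after normalization, the masses are determined by the shape, as in the parametrization of Corbera, Cors and Roberts. The bound is not an algebraic triviality. I would first try to minimize over the choice of $\alpha$, i.e.\ the free stress direction, in closed form, then bound $T$ by interval arithmetic with adaptive subdivision on the Corbera--Cors--Roberts domain. The boundary (degenerations toward collisions or collinearity) needs separate asymptotic estimates, or a compactification argument showing $T$ stays bounded away from $\frac34$ there.

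Once $Q\ge\frac14K$ is established, the consequences follow directly. $K$ vanishes only on rigid motions, and dilations are removed by the constraint on the level set of $I$, so the constrained Hessian is positive definite on the tangent space of shape space. Thus every convex CC is a nondegenerate minimum of $UI^{1/2}$ with Morse index $0$.
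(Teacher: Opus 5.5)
Your plan is essentially the paper's proof: the identity $Q=K-|\sigma|\,|L|^2$ from Dziobek's relations, the factorization of $L$ through the edge elongations up to multiples of the stress $\tau_e=A_iA_jr_e$ (your free choice of $\alpha$ is the paper's $y$), Cauchy--Schwarz against $K$ with the mass-free weights $D_e=|\sigma|/\kappa_e=-w_e/(3s_eA_iA_j)$, and an interval-arithmetic bound $\tr S(y)<\frac34$ on the Corbera--Cors--Roberts set, where the one noncompact end ($b\to0$, with bodies $3,4$ colliding at the equilateral point) is resolved by a blow-up chart in which everything extends analytically. One correction: $\sigma<0$ does not follow from the sign pattern of the areas alone; it needs the triangle-inequality argument of Lemma~\ref{lem:dziobek}, which shows that $\sigma>0$ would force the diagonals to be shorter than the sides.
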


\begin{corC}
Fix positive masses $m_1,\dots,m_4$.
\begin{enumerate}[label=\textup{(\roman*)},leftmargin=2.2em]
\item For each cyclic order and orientation, the convex CC, as a point of shape space, is a real-analytic function of the masses.
\item In the coordinates of Corbera, Cors and Roberts, the normalized mass map on the set of normalized convex CCs is injective. This is the property that \cite{CCR19} hoped to establish.
\item There are exactly three convex CCs up to similarity, and exactly six up to orientation-preserving similarity. All of them are nondegenerate local minima of $UI^{1/2}$. Consequently, the Morse-theoretic lower bound of $34$ planar CCs up to orientation-preserving similarity \cite{Moe15} holds for every choice of masses for which the concave CCs are nondegenerate.
\item Every degenerate planar four-body CC is concave.
\end{enumerate}
\end{corC}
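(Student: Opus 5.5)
Corollary~C follows from Theorems~A and~B together with the lemmas of the paper; I take both theorems as given and derive the four items in order.

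\textbf{(i)} Fix a cyclic order and an orientation. On the shape space, convex CCs are the critical points of $F_m=UI^{1/2}$. By Theorem~B each of them is nondegenerate, so the Hessian of $F_m$ on shape space is invertible there. The point to check is that the constrained Lagrangian Hessian $Q$, restricted to variations orthogonal (in the mass metric) to translations, rotations and dilations, represents the shape-space Hessian. This is the standard identification, up to a positive factor, which is valid at a critical point; the dilation direction is handled because $U+\frac\lambda2 I$ restricted to $I=\mathrm{const}$ is the relevant Lagrange function. On that complement $K>0$ by Lemma~\ref{lem:rigid}, so $Q\ge\frac14K$ gives positive definiteness. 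The map $(m,x)\mapsto \nabla F_m(x)$ is real-analytic in both the masses and the shape, so the analytic implicit function theorem gives a local analytic branch $m\mapsto x(m)$. Uniqueness from Theorem~A makes these local branches glue into a single globally defined function. Convexity is an open condition, so no other branch can enter.

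\textbf{(ii)} In the Corbera--Cors--Roberts coordinates, a normalized convex CC together with its normalized masses is a point of the graph over the three-dimensional domain. Suppose two distinct points of the domain had the same normalized masses. They would then give two convex CCs with the same cyclic order and the same masses that are not similar, since the normalization fixes the similarity and the labelling. This contradicts Theorem~A.

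\textbf{(iii)} There are three cyclic orders, and Theorem~A gives exactly one convex CC per order up to similarity, hence three. Up to orientation-preserving similarity, each splits into itself and its mirror image. These two are not equivalent, because a strictly convex quadrilateral with a given labelled cyclic order has a definite orientation, and reversing it requires a reflection. This gives six, all nondegenerate minima by Theorem~B. For the count of $34$, I plan to apply the Morse inequalities for $F_m$ on the planar shape space modulo rotations, as in \cite{Moe15}, which need all critical points to be nondegenerate. Collinear CCs are nondegenerate by Moulton/Pacella \cite{Pac87}, convex ones by Theorem~B, and concave ones by hypothesis. The bound in \cite{Moe15} then applies.

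\textbf{(iv)} A planar four-body CC is collinear, convex, or concave by Lemma~\ref{lem:nocollinear}. Convex CCs are nondegenerate by Theorem~B. Collinear CCs are nondegenerate critical points in the planar shape space by the classical result of Pacella/Moeckel \cite{Pac87,Moe15}. Hence any degenerate CC must be concave.

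The main obstacle is the identification in (i) of the shape-space Hessian with $Q$ restricted to the complement of the similarity directions. One must check that the tangent direction of dilation does not create an extra kernel. It does not: $Q$ at a CC vanishes on the dilation direction only after the constraint is quotiented out, and shape space removes dilation. The argument therefore works on the space tangent to $I=\mathrm{const}$ modulo rotations and translations, where $K>0$.
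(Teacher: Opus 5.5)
Your proposal is correct and follows the paper's proof essentially item by item. In (i), your implicit-function branches glued by uniqueness are the analytic section $s(\hat m)$ from the proof of Theorem~A; (ii) reduces to uniqueness plus injectivity of $\cE\to\cS$ (Lemma~\ref{lem:normal}(c)); (iii) checks the nondegeneracy hypothesis of Moeckel's Morse-theoretic count; and (iv) is the same trichotomy. One harmless slip in your last paragraph: $Q$ does not vanish on the dilation direction, since $Q(q-c)=3U>0$ by Lemma~\ref{lem:hessian}(b); rather the dilation lies in the kernel of $D^2(UI^{1/2})$, and the shape-space Hessian is $I^{1/2}Q$ restricted to the space $W$ of Lemma~\ref{lem:shape}, which is what you in effect conclude.
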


Uniqueness also recovers the known symmetry theorems. Their earlier proofs rely on a detailed analysis of the Dziobek equations; here they follow in a few lines.

\begin{corD}
Label the bodies so that the cyclic order is $(1234)$, and let $q$ be the convex CC.
\begin{enumerate}[label=\textup{(\alph*)},leftmargin=2.2em]
\item If $m_1=m_3$, then $q$ is symmetric with respect to the diagonal $q_2q_4$ \textup{\cite{AFS08}}.
\item If $m_1=m_2$ and $m_3=m_4$, then $q$ is an isosceles trapezoid with $q_1q_2\parallel q_3q_4$ \textup{\cite{FLM17}}.
\item If $m_1=m_3$ and $m_2=m_4$, then $q$ is a rhombus \textup{\cite{LS02,PCS07,AFS08}}.
\end{enumerate}
\end{corD}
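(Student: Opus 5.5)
The statement to prove is Corollary D, and the plan is to derive it from the uniqueness in Theorem~A by a symmetry argument. Each case has a mass-preserving relabelling combined with an isometry; it maps the convex CC to another convex CC with the same cyclic order, so uniqueness up to similarity forces the two to coincide. The remaining work is to see that this coincidence gives the claimed geometric symmetry, and not merely a similarity that differs from the expected one.

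\emph{Case (a).} Let $\sigma$ be the transposition $(13)$ and set $q'_i=q_{\sigma(i)}$. Since $m_1=m_3$, the configuration $q'$ is again a CC for the same masses. It is convex, and its cyclic order is $(3214)$, which is the reversal of $(1234)$. By Theorem~A there is a similarity $S$ with $q'_i=S(q_i)$ for all $i$. Then:
\begin{itemize}[leftmargin=2em]
\item Normalizing the centre of mass to the origin and $I=1$, $S$ is an orthogonal map $g$.
\item We have $g(q_2)=q_2$ and $g(q_4)=q_4$.
\item Since $g$ reverses the cyclic orientation of the quadrilateral, $g$ is a reflection.
\item $q_2$ and $q_4$ are distinct nonzero vectors fixed by $g$. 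They are not opposite multiples, because the centre of mass lies strictly inside the quadrilateral and not on the diagonal $q_2q_4$ unless $q_1,q_3$ are symmetric about it anyway. Hence they span the axis of $g$.
\item Therefore $g$ is the reflection across the line $q_2q_4$, and it swaps $q_1$ and $q_3$.
\end{itemize}

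The main obstacle is the fourth step, ruling out that $g$ is a rotation or a reflection in some other line. It is cleanest to argue directly from the point permutation. A reflection fixes a line pointwise. The two fixed points $q_2,q_4$ lie on that line, so the line is $q_2q_4$ whenever $q_2\neq q_4$, which always holds. This does not need the centre of mass, because an isometry fixing two distinct points fixes the line through them. So $g$ is either the identity or the reflection in line $q_2q_4$. The identity would give $q_1=q_3$, a collision. Hence $g$ is the reflection, which proves (a).

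\emph{Case (b).} Use $\sigma=(12)(34)$. This preserves the masses and maps the cyclic order $(1234)$ to $(2143)$, again a reversal. Uniqueness gives an isometry $g$ with $g(q_1)=q_2$, $g(q_2)=q_1$, $g(q_3)=q_4$, $g(q_4)=q_3$. Then $g^2$ fixes four non-collinear points, so $g$ is an involution. The options are:
\begin{itemize}[leftmargin=2em]
\item The identity: impossible, since it would force collisions.
\item A point reflection: then $q_1q_2$ and $q_3q_4$ would share a midpoint, which makes them the diagonals. That contradicts the convex order, in which they are sides.
\item A line reflection: this is the only remaining option.
\end{itemize}
In the last case the axis is the perpendicular bisector of both $q_1q_2$ and $q_3q_4$. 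Hence these sides are parallel, both perpendicular to the axis, and the quadrilateral is an isosceles trapezoid.

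\emph{Case (c).} Apply (a) to both diagonals, using $m_1=m_3$ and, after relabelling, $m_2=m_4$. The quadrilateral is then symmetric about both diagonals. Each diagonal is therefore the perpendicular bisector of the other, so all four sides are equal and $q$ is a rhombus.
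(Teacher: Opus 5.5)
Your proof is correct and follows the paper's route: a mass-preserving relabelling gives a second convex CC with the same cyclic order, Theorem~A gives a similarity between the two, and the fact that this similarity fixes or swaps two distinct points forces it to be the expected reflection. The only difference is one of technique. The paper first composes with the candidate reflection and then uses that an orientation-preserving similarity fixing two points is the identity; you identify the orientation-reversing isometry directly, and in (b) you classify the involutive isometries and exclude the half-turn by convexity. The tentative centre-of-mass bullets in (a) are unnecessary and should be dropped in favour of your final two-fixed-points argument.
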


The constant $\frac14$ in Theorem~B is not optimal: the second implementation of the computation, described in Section~\ref{sec:indep}, gives $Q\ge\frac{13}{32}K$. In the other direction, the best constant is at most $\frac23$: along a family of convex CCs in which two of the masses tend to zero, the smallest generalized eigenvalue of the pair $(Q,K)$ tends to $\frac23$ (Proposition~\ref{prop:corner}). Numerically $\frac23$ is the optimal constant (Conjecture~\ref{conj:sharp}).

\subsection{Method}
The route from nondegeneracy to uniqueness is well known \cite{San21a,San21b,Rob25,San26}. If every convex CC with a given cyclic order is a nondegenerate minimum, then a degree or Morse-theoretic argument shows that their number does not depend on the masses, and for equal masses it is one. The difficulty is to prove nondegeneracy for all masses at once. Our proof of Theorem~B has three ingredients.

\emph{An identity.} Dziobek's relations $m_im_j(r_{ij}^{-3}-\lambda/M)=\sigma A_iA_j$, where the $A_l$ are oriented triangle areas and $\sigma<0$, turn the indefinite part of $Q$ into a negative multiple of a square:
\[
Q=K-|\sigma|\,|L|^2,\qquad L(v)=\sum_{l=1}^4A_lv_l\in\R^2
\]
(Proposition~\ref{prop:identity}). We found this identity independently of \cite{San26}. It is the analogue, in the coordinates of the bodies, of Santoprete's decomposition in pair space (Remark~\ref{rem:identity}).

\emph{A majorant in which the masses do not appear.} We express $L$ through the variations of the mutual distances and apply the Cauchy--Schwarz inequality against $K$. This gives $|\sigma|\,|L|^2\le\tr S(y)\,K$ for every $y\in\R^2$, where $S(y)$ is an explicit positive semidefinite $2\times2$ matrix that depends only on the shape of the configuration (Proposition~\ref{prop:majorant}). Since the shape of a convex CC in turn determines the masses, the problem becomes the estimation of an explicit function on the three-dimensional set $\cE$ of normalized convex CCs, which \cite{CCR19} describe by a single analytic equation in four explicit coordinates.

\emph{A rigorous computation.} We cover $\cE$ by boxes, localize it with interval Newton steps applied to the Dziobek equation, and bound $\tr S(y)$ by $\frac34$ on every box, using ball arithmetic. The set $\cE$ is not compact: as two of the masses tend to zero, the configuration degenerates, and the two remaining bodies and the point where the two small bodies collide form an equilateral triangle. We handle this end with a blow-up chart in which $\cE$ extends analytically across the corner. At the corner the smallest generalized eigenvalue of $(Q,K)$ has limits between $\frac23$ and $\frac34$ (Proposition~\ref{prop:corner}), which leaves room for a uniform estimate. With $32$ processes the computation takes under five minutes (Section~\ref{sec:results}). A second program, written from a specification of the formulas without access to the code of the first, repeats it (Section~\ref{sec:indep}).

Theorem~A follows from Theorem~B by a covering argument. The pairs (convex shape, masses) that form CCs make up a manifold, and by Theorem~B its projection to the simplex of masses is a local diffeomorphism. The projection is also proper, because limits of convex CCs are convex: Shub's lemma excludes collisions, and Theorem~B, applied once more, excludes collinear limits (Lemma~\ref{lem:collinear}). So the projection is a covering map, and we compute its degree at equal masses, where the symmetry theorem of Albouy, Fu and Sun \cite{AFS08} shows that the square is the only convex CC.

Both theorems, including the computation, have also been formalized in the Lean proof assistant, together with Corollary~D, most of Corollary~C and most of the preliminaries. The formal proof is checked by Lean's kernel and uses only the standard axioms. Section~\ref{sec:lean} describes it and lists what is not formalized.

\subsection{Organization}
Section~\ref{sec:prelim} collects the preliminaries: the equations, the Hessian, the shape space, oriented areas and Dziobek's relations. Section~\ref{sec:identity} proves the identity and the majorant. Section~\ref{sec:normal} introduces the normalized coordinates, the a priori bounds and the blow-up chart. Section~\ref{sec:cap} describes the computer-assisted proof of the bound $\tr S<\frac34$ and a second implementation of it. Section~\ref{sec:proofs} proves Theorems~B and~A and describes their formalization in Lean, and Section~\ref{sec:cons} proves the corollaries and states some open questions. \hyperref[app:chart]{The Appendix} lists the formulas of the blow-up chart.

\section{Preliminaries}\label{sec:prelim}

Throughout, $m_1,\dots,m_4>0$. The six unordered pairs $e=ij$, $1\le i<j\le4$, are called \emph{edges}. For an edge $e=ij$ we write $q_e=q_i-q_j$, $v_e=v_i-v_j$, $r_e=|q_e|$, $R_e=r_e^2$ and $s_e=r_e^{-3}$. We identify $\R^2$ with $\C$, write $J$ for multiplication by $\sqrt{-1}$ (rotation by $\pi/2$), and let $J$ act diagonally on $(\R^2)^4=\C^4$. The mass inner product is $\ip{u}{v}_M=\sum_im_i\ip{u_i}{v_i}$.

\subsection{The equations and the Hessian}
Let $q$ be a CC with multiplier $\lambda=U/I$, and set
\[
\lambda'=\frac{\lambda}{M},\qquad w_e=s_e-\lambda',\qquad \omega_e=m_im_jw_e\quad(e=ij).
\]
Since $\nabla_{q_i}U=\sum_{j\ne i}m_im_js_{ij}(q_j-q_i)$ and $m_i(q_i-c)=-\frac1M\sum_{j}m_im_j(q_j-q_i)$, equation \eqref{eq:cc} is equivalent to
\begin{equation}\label{eq:equil}
\sum_{j\ne i}\omega_{ij}\,(q_j-q_i)=0,\qquad i=1,\dots,4 .
\end{equation}
Conversely, if a collision-free $q$ satisfies \eqref{eq:equil} for some $\lambda'\in\R$, then $q$ satisfies \eqref{eq:cc} with $\lambda=M\lambda'$. Pairing with $q-c$ then shows that $\lambda=U/I$, so $q$ is a CC.

Because $I=\frac1M\sum_{i<j}m_im_jr_{ij}^2$, the function $\Phi=U+\frac{\lambda}{2}I$, with $\lambda$ frozen at its value at $q$, is a sum of pair potentials:
\[
\Phi=\sum_{i<j}m_im_j\,\varphi(r_{ij}),\qquad \varphi(r)=\frac1r+\frac{\lambda'}{2}r^2 .
\]
One computes $\varphi'(r)/r=-(r^{-3}-\lambda')$ and $\varphi''(r)-\varphi'(r)/r=3r^{-3}$. For an edge $e$ and a vector $v\in(\R^2)^4$ let
\[
\dr_e(v)=\frac{\ip{q_e}{v_e}}{r_e}
\]
be the first variation of $r_e$, and put $\kappa_e=3m_im_js_e>0$ for $e=ij$.

\begin{lemma}\label{lem:hessian}
Let $q$ be a CC and $Q=D^2\Phi(q)$. Then:
\begin{enumerate}[label=\textup{(\alph*)},leftmargin=2em]
\item $Q(v)=K(v)-\sum_e\omega_e|v_e|^2$, where $K(v)=\sum_e\kappa_e\,\dr_e(v)^2$;
\item $Q(q,v)=3\lambda\ip{q-c}{v}_M$ for all $v$, and in particular $Q(q,q)=3U$;
\item the three-dimensional space $\cT$ spanned by the translations and by $J(q-c)$ is contained in $\ker Q$.
\end{enumerate}
\end{lemma}

\begin{proof}
(a) For $x\in\R^2\setminus\{0\}$ the Hessian of $x\mapsto\varphi(|x|)$ is $\varphi''\,\hat x\hat x^{\mathsf T}+\frac{\varphi'}{r}(\mathrm{Id}-\hat x\hat x^{\mathsf T})=(\varphi''-\frac{\varphi'}{r})\hat x\hat x^{\mathsf T}+\frac{\varphi'}{r}\mathrm{Id}$, where $r=|x|$ and $\hat x=x/r$. Evaluating at $x=q_e$ in the direction $v_e$ and summing over the edges gives (a).
(b) Since $U$ is homogeneous of degree $-1$, $\nabla U$ is homogeneous of degree $-2$, so $D^2U(q)[q,v]=-2\ip{\nabla U(q)}{v}=2\lambda\ip{q-c}{v}_M$ by \eqref{eq:cc}. Also $D^2I(q)[u,v]=2\ip{u-\bar u}{v-\bar v}_M$, where $\bar u=\frac1M\sum_im_iu_i$, so $\frac\lambda2D^2I(q)[q,v]=\lambda\ip{q-c}{v}_M$. Adding gives $Q(q,v)=3\lambda\ip{q-c}{v}_M$, and $Q(q,q)=3\lambda I=3U$.
(c) $\Phi$ is invariant under translations and rotations. Differentiating the identities $\Phi(q+t)=\Phi(q)$ and $\nabla\Phi(e^{J\theta}q)=e^{J\theta}\nabla\Phi(q)$ at a critical point of $\Phi$ shows that translations and $Jq$ lie in $\ker Q$. Since $J(q-c)$ differs from $Jq$ by a translation, the claim follows.
\end{proof}

Moeckel \cite{Moe15} writes the same quadratic form, restricted to the normalized configuration space, as $H(x)$. In Moeckel's convention convex CCs are local minima and collinear CCs have index $n-2$.

\subsection{Shape space}\label{sec:shape}
Orientation-preserving similarities act on $\C^4$ by $q\mapsto\mu q+t\mathbb 1$, with $\mu\in\C^*$, $t\in\C$ and $\mathbb 1=(1,1,1,1)$. The quotient of the set of configurations that are not totally collapsed is the \emph{shape space}
\[
\cS=\mathbb P(\C^4/\C\mathbb 1)\cong\C P^2,
\]
a compact real four-manifold. For $i\ne j$, the normalization $q_i=0$, $q_j=1$ identifies the open set of classes with $q_i\ne q_j$ with $\C^2$, and these affine charts cover $\cS$. Let $\Delta\subset\cS$ be the collision locus, where $q_i=q_j$ for some $i\ne j$. Since $U(\mu q+t\mathbb 1)=|\mu|^{-1}U(q)$ and $I(\mu q+t\mathbb 1)=|\mu|^2I(q)$, the function
\[
f_m=U\,I^{1/2}
\]
descends to a real-analytic function on $\cS\setminus\Delta$. From $d(UI^{1/2})=I^{1/2}\bigl(dU+\frac{U}{2I}dI\bigr)$ we see that $[q]$ is a critical point of $f_m$ if and only if $q$ is a CC. We call a CC \emph{nondegenerate} if $[q]$ is a nondegenerate critical point of $f_m$, and define its \emph{Morse index} to be the index of $D^2f_m([q])$, where the \emph{index} of a quadratic form is the largest dimension of a subspace on which it is negative definite. Scaling all masses by the same factor multiplies $f_m$ by a constant, so the critical points depend only on the normalized masses.

\begin{lemma}\label{lem:shape}
Let $q$ be a CC and let $W$ be the space of vectors $v$ with $\sum_im_iv_i=0$, $\ip{q-c}{v}_M=0$ and $\ip{J(q-c)}{v}_M=0$. Then $(\R^2)^4=\cT\oplus\R(q-c)\oplus W$, the three summands are $Q$-orthogonal, and $Q(q-c)=3U>0$. The differential of $q\mapsto[q]$ maps $W$ isomorphically onto $T_{[q]}\cS$, and under this isomorphism $D^2f_m([q])$ corresponds to $I^{1/2}\,Q|_W$. Consequently the Morse index of $q$ is the index of $Q$, and $q$ is nondegenerate if and only if $\ker Q=\cT$. In particular $q$ is a nondegenerate local minimum of $f_m$ if and only if $Q(v)>0$ for every $v\notin\cT$.
\end{lemma}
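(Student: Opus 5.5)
The plan is to establish the decomposition first, then the $Q$-orthogonality, and finally to compute the Hessian of $f_m$ at a critical point in terms of $Q$.

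\emph{Decomposition.} Translations are $\mathbb 1\otimes a$ with $a\in\R^2$. The mass pairing of $\mathbb 1\otimes a$ with $q-c$ and with $J(q-c)$ vanishes, because $\sum_im_i(q_i-c)=0$. The three vectors $q-c$, $J(q-c)$ and a translation are $\ip{\cdot}{\cdot}_M$-orthogonal to each other: we have $\ip{q-c}{J(q-c)}_M=0$ since $\ip{x}{Jx}=0$ pointwise. Moreover $q-c\neq0$, since the configuration is collision-free. Hence $\cT\oplus\R(q-c)$ has dimension $4$. The space $W$ is exactly its $\ip{\cdot}{\cdot}_M$-orthogonal complement: the condition $\sum m_iv_i=0$ is orthogonality to the translations, and the other two conditions are the remaining ones. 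This gives $(\R^2)^4=\cT\oplus\R(q-c)\oplus W$, with $\dim W=4$.

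\emph{$Q$-orthogonality.} By Lemma~\ref{lem:hessian}(c), $\cT\subset\ker Q$, so $\cT$ is $Q$-orthogonal to everything. By Lemma~\ref{lem:hessian}(b), $Q(q,v)=3\lambda\ip{q-c}{v}_M$. Since $q$ and $q-c$ differ by a translation, which lies in $\ker Q$, we get $Q(q-c,v)=3\lambda\ip{q-c}{v}_M$. This vanishes for $v\in W$, and for $v=q-c$ it gives $3\lambda I=3U>0$.

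\emph{Hessian on shape space.} The orbit of $q$ under orientation-preserving similarities has tangent space $\cT\oplus\R(q-c)$, namely translations, rotation $J(q-c)$ and dilation $q-c$. Since $\cS$ has dimension $4$ and $W$ is a complement of the orbit's tangent space, $d\pi$ maps $W$ isomorphically onto $T_{[q]}\cS$. Let $F=UI^{1/2}$ on configuration space. At a critical point, the Hessian of $f_m$ pulled back along $\pi$ is $D^2F(q)$ restricted to any complement of the orbit directions, because $F=f_m\circ\pi$ and $df_m=0$ there. We compute $D^2F(q)$ on $W$. Writing $F=U\cdot g(I)$ with $g(t)=t^{1/2}$,
\[
D^2F=g\,D^2U+g'(dU\otimes dI+dI\otimes dU)+U\bigl(g''\,dI\otimes dI+g'\,D^2I\bigr).
\]
For $v\in W$ we have $dI(v)=2\ip{q-c}{v}_M=0$, so the middle and $g''$ terms vanish. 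What remains is $I^{1/2}D^2U+\frac{U}{2I^{1/2}}D^2I=I^{1/2}\bigl(D^2U+\frac{\lambda}{2}D^2I\bigr)=I^{1/2}Q$. The cross terms $dU\otimes dI$ also vanish because $dI(v)=0$; note that the same identity shows $dU(v)=-\lambda\ip{q-c}{v}_M=0$ on $W$.

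\emph{Consequences.} By $Q$-orthogonality, $Q$ is the direct sum of $0$ on $\cT$, the positive value $3U$ on $\R(q-c)$, and $Q|_W$. Hence the index of $Q$ equals the index of $Q|_W$, which equals the index of $D^2f_m$. Also $\ker Q=\cT\oplus\ker(Q|_W)$, so $q$ is nondegenerate if and only if $\ker Q=\cT$. Finally, $Q>0$ off $\cT$ holds if and only if $Q|_W$ is positive definite, which is the condition for a nondegenerate local minimum.

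The only delicate step is justifying that the Hessian of $f_m$ may be computed via an arbitrary complement at a critical point. This is standard, since at a critical point the second derivative is intrinsic and $\pi$ is a submersion.
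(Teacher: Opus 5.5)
Your proof is correct and takes the same route as the paper: the mass-orthogonal decomposition, $Q$-orthogonality from Lemma~\ref{lem:hessian}(b),(c), and the second derivative of $F=UI^{1/2}$ restricted to $W$, where $dI$ vanishes. The only cosmetic difference is in the last step: the paper first writes $D^2F(q)=I^{1/2}\bigl[Q-\frac{3U}{4I^2}\,dI\otimes dI\bigr]$ on all of $(\R^2)^4$, using the critical-point relation $dU=-\frac{U}{2I}dI$, whereas you expand the product rule and drop the extra terms directly on $W$.
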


\begin{proof}
The translations are $\ip{\cdot}{\cdot}_M$-orthogonal to the vectors with $\sum_im_iv_i=0$, and $q-c$, $J(q-c)$ are $\ip{\cdot}{\cdot}_M$-orthogonal to each other. This gives the direct sum decomposition. The space $\cT\oplus\R(q-c)$ is the tangent space at $q$ of the orbit of the similarity group, and $\dim W=4=\dim\cS$, so the differential of the projection maps $W$ isomorphically onto $T_{[q]}\cS$. The summand $\cT$ is $Q$-orthogonal to everything by Lemma~\ref{lem:hessian}(c). For $w\in W$, Lemma~\ref{lem:hessian}(b) gives $Q(q-c,w)=Q(q,w)=3\lambda\ip{q-c}{w}_M=0$. It also gives $Q(q-c)=Q(q)=3U$. Finally, differentiating $F=UI^{1/2}$ twice at a critical point, and using $dU=-\frac{U}{2I}dI$ there, gives
\[
D^2F(q)=I^{1/2}\Bigl[Q-\frac{3U}{4I^2}\,dI\otimes dI\Bigr].
\]
Here $dI(w)=2\ip{q-c}{w}_M=0$ for $w\in W$. Since $[q]$ is a critical point, $D^2f_m([q])$ evaluated on the image of $w$ equals $D^2F(q)[w,w]$.
\end{proof}

\subsection{Oriented areas}
For three points put $[ijk]=\frac12\det(q_j-q_i,\,q_k-q_i)$, and define
\[
A_1=[234],\qquad A_2=-[134],\qquad A_3=[124],\qquad A_4=-[123].
\]
For the square $q=(1,J,-1,-J)$, whose vertices are in counterclockwise order, the areas are $A_1=A_3=1$ and $A_2=A_4=-1$.

\begin{lemma}\label{lem:areas}
\begin{enumerate}[label=\textup{(\alph*)},leftmargin=2em]
\item $\sum_lA_l=0$ and $\sum_lA_lq_l=0$.
\item $A_l(\mu q+t\mathbb 1)=|\mu|^2A_l(q)$ for $\mu\in\C^*$, $t\in\C$, and $A_l(\bar q)=-A_l(q)$.
\item If $q_i=q_j$ with $i\ne j$, then $A_k=0$ for $k\notin\{i,j\}$. If the $q_l$ are not all equal, then all $A_l$ vanish if and only if the four points are collinear.
\item Suppose that no three of the $q_l$ are collinear. Then every $A_l$ is nonzero, and exactly one of the following holds.
\begin{itemize}
\item Two of the $A_l$ are positive and two are negative. Then $q$ is a strictly convex quadrilateral whose diagonals join the two bodies with positive $A_l$ and the two bodies with negative $A_l$. The sign pattern $(+,-,+,-)$ occurs if and only if $q_1,q_2,q_3,q_4$ are the vertices of a strictly convex quadrilateral in counterclockwise order.
\item Three of the $A_l$ have the same sign. Then $q$ is concave, and the body whose $A_l$ has the other sign lies in the interior of the triangle formed by the other three.
\end{itemize}
\end{enumerate}
\end{lemma}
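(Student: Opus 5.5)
The plan is to prove the four parts in order, treating $[ijk]$ as a multilinear alternating expression in the positions and using the determinant identity
\[
\sum_{l}A_l\,(1,q_l)=0 .
\]
This identity is the cofactor expansion of the vanishing $3\times3$ determinant whose first row repeats one of the rows of the $3\times4$ matrix with columns $(1,\operatorname{Re}q_l,\operatorname{Im}q_l)^{\mathsf T}$. Concretely, each $A_l$ is, up to the factor $\frac12$ and the alternating sign, the $3\times3$ minor obtained by deleting column $l$. The signs in the definition, $A_1=[234]$, $A_2=-[134]$, $A_3=[124]$, $A_4=-[123]$, are exactly the cofactor signs. Hence $\sum_l A_l x_l=0$ for each row $x$ of the matrix, which gives (a): the row of ones yields $\sum_lA_l=0$, and the coordinate rows yield $\sum_lA_lq_l=0$. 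I will check the sign convention once against the square $(1,J,-1,-J)$, where $A=(1,-1,1,-1)$ indeed sums to zero.

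For (b), every $[ijk]$ is invariant under translation. Under $q\mapsto\mu q$ it scales by $\det(\mu)=|\mu|^2$, because multiplication by $\mu$ is a real linear map of $\C$ with that determinant. Complex conjugation is a reflection with determinant $-1$, so it flips every sign.

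For (c), if $q_i=q_j$, then each triangle containing both $i$ and $j$ is degenerate, and these are exactly the triangles defining $A_k$ for $k\notin\{i,j\}$. For the second statement, collinear points give zero determinants. Conversely, suppose all $A_l=0$ and the points are not all equal. Then every triple is collinear, so the $3\times4$ matrix has rank at most $2$, because all its maximal minors vanish. Its row space then contains the ones row and the coordinate rows only within a $2$-dimensional span. This means some nontrivial affine function $a+\ip{b}{q_l}$ vanishes at all $l$ with $b\ne0$; note $b=0$ is impossible since it forces $a=0$. Therefore all four points lie on one line.

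For (d), no three points are collinear, so every $A_l\ne0$. By (a) and (b) we may normalize $\sum_lA_l=0$, so the signs are neither all equal nor split four–zero. The case analysis uses barycentric coordinates. If $A_4$ is the unique sign, the identity $\sum_lA_lq_l=0$ with $\sum_lA_l=0$ gives
\[
q_4=\sum_{l\le3}\frac{-A_l}{A_4}\,q_l ,
\]
where the coefficients are positive and sum to $1$. So $q_4$ lies in the open triangle $q_1q_2q_3$, and the configuration is concave. If two are positive, say $A_i,A_j>0$ and $A_k,A_l<0$, then
\[
\frac{A_iq_i+A_jq_j}{A_i+A_j}=\frac{(-A_k)q_k+(-A_l)q_l}{-A_k-A_l}.
\]
This exhibits a common interior point of the open segments $q_iq_j$ and $q_kq_l$. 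Two crossing segments, with no three points collinear, are exactly the diagonals of a strictly convex quadrilateral. The two alternatives are mutually exclusive because a convex quadrilateral has no vertex inside the triangle of the others, and vice versa.

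Finally, the pattern $(+,-,+,-)$ puts the diagonals at $13$ and $24$, so the cyclic order is $1234$ up to reversal. Its orientation sign is that of $A_1=[234]$, which is positive exactly for counterclockwise order, as the square computation confirms. Reversal flips all signs by (b).

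The main obstacle is (c)'s converse together with getting the cofactor signs right. Everything else is sign bookkeeping.
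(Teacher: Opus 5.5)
Your proof is correct and follows the paper's argument: the repeated-row cofactor expansion for (a) (the augmented matrix is $4\times4$, not $3\times3$), the determinants $|\mu|^2$ and $-1$ for (b), and in (d) the common barycentric point $\frac1S\sum_{A_l>0}A_lq_l=\frac1S\sum_{A_l<0}|A_l|q_l$ obtained from (a). The only deviation is in (c): you use a linear dependence among the rows of the $3\times4$ matrix, where the paper picks two distinct points $q_i\ne q_j$ and uses the two degenerate triangles containing them, and both routes are equally short.
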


\begin{proof}
(a) Consider the $4\times4$ matrix whose columns are $(1,q_l)^{\mathsf T}\in\R^3$ together with a fourth row that repeats one of its three rows. Its determinant vanishes. Expanding along the repeated row gives $\sum_l(-1)^{l+1}\,2[\,\cdot\,]_l\,(1,q_l)=0$, where $[\,\cdot\,]_l$ is the triangle formed by the bodies other than $l$. This is the statement $\sum_lA_l(1,q_l)=0$.
(b) Complex multiplication by $\mu$ is a rotation composed with a dilation by $|\mu|$, and conjugation is a reflection.
(c) The first statement is clear. If not all points coincide, choose $q_i\ne q_j$. If all $A_l$ vanish, the two triangles containing $q_i$ and $q_j$ are degenerate, so all points lie on the line through $q_i$ and $q_j$.
(d) The $A_l$ are nonzero because no triangle is degenerate. Let $S$ be the sum of the positive $A_l$. By (a), $S$ is also the sum of the absolute values of the negative $A_l$, and
\[
p=\frac1S\sum_{A_l>0}A_lq_l=\frac1S\sum_{A_l<0}|A_l|q_l .
\]
If two of the $A_l$ are positive, $p$ lies in the open segment joining the two corresponding points and also in the open segment joining the other two. Two segments that cross at interior points are the diagonals of a convex quadrilateral, which is strictly convex because no three points are collinear. If three $A_l$ are positive, $p$ is the fourth point and is a convex combination of the other three with positive weights. For the pattern $(+,-,+,-)$ the diagonals are $q_1q_3$ and $q_2q_4$, so the cyclic order is $(1234)$. Moreover $A_1=[234]>0$ means that the triangle $q_2q_3q_4$ is positively oriented, that is, the quadrilateral is traversed counterclockwise. Conversely, let $q_1q_2q_3q_4$ be a counterclockwise strictly convex quadrilateral. No vertex lies inside the triangle of the others, so two $A_l$ are positive and two are negative, and the equal-sign pairs are the diagonals $\{1,3\}$ and $\{2,4\}$. Finally $A_1=[234]>0$, because three consecutive vertices of a counterclockwise convex polygon form a positively oriented triangle.
\end{proof}

\subsection{Infinitesimal rigidity}
Let $\dr:(\R^2)^4\to\R^6$ be the linear map $v\mapsto(\dr_e(v))_e$. A vector $(\omega_e)\in\R^6$ satisfying \eqref{eq:equil} is called a \emph{self-stress} of $q$. For $\omega\in\R^6$ and $v\in(\R^2)^4$,
\begin{equation}\label{eq:stressdual}
\sum_e\omega_er_e\,\dr_e(v)=\sum_{i<j}\omega_{ij}\ip{q_i-q_j}{v_i-v_j}=-\sum_i\Bigl\langle v_i,\sum_{j\ne i}\omega_{ij}(q_j-q_i)\Bigr\rangle .
\end{equation}

\begin{lemma}\label{lem:rigid}
Suppose that no three of $q_1,\dots,q_4$ are collinear. Then $\ker\dr$ is the space $\cT$ of infinitesimal translations and rotations, $\dr$ has rank $5$, and the self-stresses of $q$ are the multiples of $(A_iA_j)_{ij}$. The image of $\dr$ is the hyperplane $\tau^\perp$, where
\[
\tau_e=A_iA_j\,r_e\qquad(e=ij).
\]
In particular, if $q$ is a CC then $K(v)=0$ if and only if $v\in\cT$.
\end{lemma}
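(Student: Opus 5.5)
The plan is to compute $\ker\dr$ directly, read off the rank, and then obtain the self-stresses and the image of $\dr$ from the duality formula \eqref{eq:stressdual}.

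\emph{The kernel.} First, $\cT\subset\ker\dr$. For a translation every $v_e=0$. For $v=Jq$ (and hence for $J(q-c)$) we have $\ip{q_e}{Jq_e}=0$. These span a three-dimensional space, since $Jq$ is not a translation when the points are distinct. Conversely, let $v\in\ker\dr$. Subtract a translation so that $v_1=0$. The conditions $\ip{q_1-q_j}{v_j}=0$ for $j=2,3,4$ then force $v_j=t_jJ(q_j-q_1)$ with $t_j\in\R$. For $j,k\ge2$ the condition $\ip{q_j-q_k}{v_j-v_k}=0$ becomes
\[
t_j\ip{q_j-q_k}{J(q_j-q_1)}=t_k\ip{q_j-q_k}{J(q_k-q_1)} .
\]
Both coefficients equal $\pm2[1jk]$, which is nonzero because $q_1,q_j,q_k$ are not collinear. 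Tracking the signs shows that the relation says exactly $t_j=t_k$. Hence $v=tJ(q-q_1)$, which lies in $\cT$. So $\ker\dr=\cT$ and $\operatorname{rank}\dr=8-3=5$. The image is therefore a hyperplane in $\R^6$.

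\emph{Self-stresses and the image.} By \eqref{eq:stressdual}, $\omega$ is a self-stress if and only if the vector $(\omega_er_e)_e$ annihilates the image of $\dr$. Equivalently, $\omega$ lies in the kernel of the adjoint map $\R^6\to(\R^2)^4$, $\omega\mapsto\bigl(\sum_{j\ne i}\omega_{ij}(q_j-q_i)\bigr)_i$. Since the image has codimension one, the self-stresses form a line. To identify this line I will check that $\omega_{ij}=A_iA_j$ satisfies \eqref{eq:equil}:
\[
\sum_{j\ne i}A_iA_j(q_j-q_i)=A_i\Bigl(\sum_jA_jq_j-q_i\sum_jA_j\Bigr)=0
\]
by Lemma~\ref{lem:areas}(a). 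This stress is nonzero by Lemma~\ref{lem:areas}(d). The image of $\dr$ is thus the annihilator of $(A_iA_jr_e)_e=\tau$, that is, $\tau^\perp$.

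\emph{The consequence for $K$.} For a CC, $K(v)=\sum_e\kappa_e\dr_e(v)^2$ with every $\kappa_e>0$, so $K(v)=0$ if and only if $\dr(v)=0$, if and only if $v\in\cT$. This needs only that no three bodies are collinear, which holds for noncollinear CCs by Lemma~\ref{lem:nocollinear}. If the statement is meant to cover collinear CCs as well, then $\ker\dr$ becomes larger, so it must be applied only in the noncollinear setting.

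\emph{Main obstacle.} The only delicate step is the sign bookkeeping that turns the two-by-two relation into $t_j=t_k$. I would verify it with the identity $\ip{a}{Jb}=-\det(a,b)$ and the expansion $\det(q_j-q_k,q_j-q_1)=\det(q_k-q_1,q_j-q_1)=-2[1jk]$, and similarly for the other coefficient.
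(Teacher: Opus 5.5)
Your proof is correct and is essentially the paper's. Both compute $\ker\dr$ directly: you fix $q_1$ and show that $q_2,q_3,q_4$ turn about it at a common rate, whereas the paper uses the infinitesimal rigidity of the triangle $q_1q_2q_3$ and then fixes $q_4$ with two edges. Both then use the same duality argument via \eqref{eq:stressdual} and Lemma~\ref{lem:areas}(a).

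One slip in your sign check: in fact $\det(q_j-q_k,q_j-q_1)=\det(q_j-q_1,q_k-q_1)=2[1jk]$, not $-2[1jk]$. Both coefficients still equal $-2[1jk]$, which is also forced by $J(q-q_1)\in\ker\dr$, so $t_j=t_k$ holds as you claim.
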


\begin{proof}
Let $v\in\ker\dr$. The triangle $q_1q_2q_3$ is nondegenerate, hence infinitesimally rigid. After subtracting an element of $\cT$ we may therefore assume $v_1=v_2=v_3=0$. Then $\ip{q_4-q_i}{v_4}=0$ for $i=1,2$. The vectors $q_4-q_1$ and $q_4-q_2$ span $\R^2$, so $v_4=0$. Hence $\ker\dr=\cT$ and $\dr$ has rank $5$. By \eqref{eq:stressdual}, $\omega$ is a self-stress if and only if $(\omega_er_e)_e$ is orthogonal to the image of $\dr$, which is a hyperplane. So the self-stresses form a line. By Lemma~\ref{lem:areas}(a),
\[
\sum_{j\ne i}A_iA_j(q_j-q_i)=A_i\Bigl(\sum_jA_jq_j-q_i\sum_jA_j\Bigr)=0,
\]
so $(A_iA_j)$ is a self-stress, and it is nonzero. Therefore $\operatorname{im}\dr=\tau^\perp$. The last statement follows because all $\kappa_e>0$.
\end{proof}

\subsection{Dziobek's relations}

\begin{lemma}\label{lem:nocollinear}
A planar CC of four bodies is either collinear or has no three bodies on a line.
\end{lemma}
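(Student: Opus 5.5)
We argue by contradiction: suppose $q$ is a planar CC that is not collinear, yet three of its bodies lie on a line. We will derive a contradiction from the equilibrium equations \eqref{eq:equil} together with the sign of $w_e=s_e-\lambda'$. Relabel so that $q_1,q_2,q_3$ lie on a line $\ell$ and $q_4\notin\ell$. Order them along $\ell$ so that $q_2$ lies strictly between $q_1$ and $q_3$; the points are distinct because $q$ is collision-free.

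The main step uses the component of \eqref{eq:equil} perpendicular to $\ell$. Let $\nu$ be a unit normal to $\ell$. For $i\in\{1,2,3\}$ only the term $j=4$ has a nonzero $\nu$-component, so pairing \eqref{eq:equil} with $\nu$ gives $\omega_{i4}\ip{q_4-q_i}{\nu}=0$. Since $\ip{q_4-q_i}{\nu}\ne0$, this forces $\omega_{14}=\omega_{24}=\omega_{34}=0$, that is, $r_{14}=r_{24}=r_{34}=\lambda'^{-1/3}$. (Recall $\lambda'>0$.) Then $q_1,q_2,q_3$ lie on a circle centred at $q_4$ and also on the line $\ell$. A line meets a circle in at most two points, so two of $q_1,q_2,q_3$ coincide. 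This contradicts the absence of collisions.

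This already proves the lemma, and no use of Dziobek's relations or the areas $A_l$ is needed. It would still be prudent to recheck the details. The equation at body $i$ involves $q_j-q_i$ for all $j\neq i$. The differences $q_j-q_i$ with $j\le3$ are parallel to $\ell$, so they contribute nothing after pairing with $\nu$. The term $q_4-q_i$ has a nonzero normal component because $q_4\notin\ell$.

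I expect no real obstacle. The only subtle point is making sure that the $\nu$-projection kills every term except $j=4$, and this holds exactly because three bodies lie on $\ell$. If the lemma is also meant to cover the case in which all four bodies are collinear, that case is excluded by hypothesis, since the statement already lists it as the collinear alternative.
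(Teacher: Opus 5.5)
Your proof is correct and is essentially the paper's argument: you pair \eqref{eq:equil} with a unit normal to the line, conclude $\omega_{i4}=0$ and hence $r_{14}=r_{24}=r_{34}=\lambda'^{-1/3}$, and then note that three distinct points of a line cannot lie on a circle centred at $q_4$. The ordering of $q_1,q_2,q_3$ along $\ell$ is never used, so you can drop it.
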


\begin{proof}
Suppose that $q_1,q_2,q_3$ lie on a line $\ell$ and $q_4\notin\ell$, and let $n$ be a unit normal to $\ell$. For $i\le3$, pairing \eqref{eq:equil} with $n$ gives $\omega_{i4}\ip{q_4-q_i}{n}=0$. Since $\ip{q_4-q_i}{n}\ne0$, we get $w_{i4}=0$, that is, $r_{14}=r_{24}=r_{34}=\lambda'^{-1/3}$. Then the three distinct points $q_1,q_2,q_3$ of $\ell$ lie on a circle centred at $q_4$, which is impossible.
\end{proof}

\begin{lemma}[Dziobek \cite{Dzi00}]\label{lem:dziobek}
Let $q$ be a noncollinear planar CC of four bodies. Then there is a constant $\sigma<0$ such that
\begin{equation}\label{eq:dziobek}
m_im_j\,w_{ij}=\sigma\,A_iA_j\qquad\text{for all }i<j .
\end{equation}
\end{lemma}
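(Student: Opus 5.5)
The plan is to read \eqref{eq:equil} as saying that $(\omega_{ij})=(m_im_jw_{ij})$ is a self-stress of $q$, and then to use the structure of self-stresses from Lemma~\ref{lem:rigid}.

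First, since $q$ is a noncollinear planar CC, Lemma~\ref{lem:nocollinear} shows that no three bodies are collinear. Lemma~\ref{lem:rigid} then applies: the self-stresses of $q$ form the line spanned by $(A_iA_j)_{ij}$. By Lemma~\ref{lem:areas}(d) every $A_l\ne0$, so this spanning vector is nonzero. Because $(\omega_{ij})$ satisfies \eqref{eq:equil}, there is a scalar $\sigma$ with $m_im_jw_{ij}=\sigma A_iA_j$ for all $i<j$. This proves \eqref{eq:dziobek} with some $\sigma\in\R$.

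It remains to show $\sigma<0$, which I expect to be the only real step. The natural tool is the case split of Lemma~\ref{lem:areas}(d).

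\emph{Convex case.} Here two of the $A_l$ are positive and two are negative, so every side $ij$ of the quadrilateral has $A_iA_j<0$. Every diagonal $ij$ has $A_iA_j>0$.

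\emph{Concave case.} Three of the $A_l$ share a sign. For two of those three, $ij$, we have $A_iA_j>0$, and for the interior body $k$ we have $A_iA_k<0$.

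The aim is an inequality among the $w_e=r_e^{-3}-\lambda'$ that fixes the sign. I would pair the self-stress with $\tau$: by \eqref{eq:stressdual} with $v=q$, and since $\dr_e(q)=r_e$,
\[
\sum_e\omega_er_e^2=0.
\]
This identity alone does not determine the sign of $\sigma$, so a geometric argument is needed.

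\emph{Convex case, continued.} The diagonals are longer than some side: in a strictly convex quadrilateral, the sum of the diagonals exceeds the sum of either pair of opposite sides. Suppose $\sigma>0$. Then $w<0$ on the sides and $w>0$ on the diagonals, so every diagonal satisfies $r<\lambda'^{-1/3}$ and every side satisfies $r>\lambda'^{-1/3}$. Hence both diagonals would be shorter than all four sides, which contradicts the inequality above.

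\emph{Concave case, continued.} Suppose $\sigma>0$. The three edges from the interior body $k$ have $w<0$, so they are longer than $\lambda'^{-1/3}$. The outer triangle edges have $w>0$, so they are shorter than $\lambda'^{-1/3}$. But a point inside a triangle is closer to some vertex than the length of some side; more precisely, its distance to each vertex is less than the longest side. This gives a contradiction.

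\emph{Ruling out $\sigma=0$.} If $\sigma=0$, all six $w_e$ vanish, so all six distances are equal. That is impossible for four points in the plane.

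Hence $\sigma<0$. The main point to get right is the elementary metric inequality in each case.
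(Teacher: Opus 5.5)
Your proof is correct and follows the paper's argument essentially step for step. You get $\sigma$ from the one-dimensionality of the self-stresses (Lemma~\ref{lem:rigid}), rule out $\sigma=0$ by equidistance, and rule out $\sigma>0$ with the diagonal-sum inequality in the convex case and the interior-point distance bound in the concave case; the side remark that $\sum_e\omega_er_e^2=0$ is correct but unused and can be dropped.
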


\begin{proof}
By Lemma~\ref{lem:nocollinear} no three bodies are collinear. By \eqref{eq:equil}, $(\omega_e)$ is a self-stress, so Lemma~\ref{lem:rigid} gives \eqref{eq:dziobek} for some $\sigma\in\R$. If $\sigma=0$, all six distances equal $\lambda'^{-1/3}$, which is impossible in the plane. Note that $w_e>0$ if and only if $r_e<\lambda'^{-1/3}$. Suppose $\sigma>0$. If $q$ is convex, then by Lemma~\ref{lem:areas}(d) $A_iA_j>0$ on the diagonals and $A_iA_j<0$ on the sides, so every diagonal is shorter than every side. But if the diagonals are $q_iq_k$ and $q_jq_l$ and meet at $p$, the triangle inequality gives
\[
r_{ik}+r_{jl}=|q_i-p|+|p-q_k|+|q_j-p|+|p-q_l|>r_{ij}+r_{kl},
\]
a contradiction. If $q$ is concave, say with $q_4$ inside the triangle $q_1q_2q_3$, then $A_iA_4<0<A_iA_j$ for $i,j\le3$. Every interior edge $q_iq_4$ would then be longer than every exterior edge. But writing $q_4=\alpha q_1+\beta q_2+\gamma q_3$ with $\alpha,\beta,\gamma>0$ and $\alpha+\beta+\gamma=1$, we get $r_{14}\le\beta r_{12}+\gamma r_{13}<\max(r_{12},r_{13})$, again a contradiction.
\end{proof}

\begin{proposition}\label{prop:convexsigns}
Let $q$ be a convex CC whose cyclic order is $(1234)$. Then:
\begin{enumerate}[label=\textup{(\alph*)},leftmargin=2em]
\item $w_{12},w_{23},w_{34},w_{14}>0>w_{13},w_{24}$. Equivalently,
\[
\min(r_{13},r_{24})>\lambda'^{-1/3}>\max(r_{12},r_{23},r_{34},r_{14}),
\]
so each diagonal is longer than each side.
\item $w_{12}w_{34}=w_{13}w_{24}=w_{14}w_{23}$.
\item A longest side and a shortest side are opposite each other.
\end{enumerate}
\end{proposition}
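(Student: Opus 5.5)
Everything follows from Dziobek's relations (Lemma~\ref{lem:dziobek}) combined with the sign pattern of the oriented areas (Lemma~\ref{lem:areas}(d)). Since $\sigma<0$ and all masses are positive, \eqref{eq:dziobek} shows that $w_{ij}$ has the sign opposite to $A_iA_j$. By Lemma~\ref{lem:areas}(d), the cyclic order $(1234)$ means, after a reflection if necessary (which flips all $A_l$ by Lemma~\ref{lem:areas}(b) and leaves the products $A_iA_j$ unchanged), that the sign pattern is $(+,-,+,-)$. Hence $A_iA_j<0$ on the four sides $12,23,34,14$, and $A_iA_j>0$ on the diagonals $13,24$. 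This gives the signs in (a). Since $w_e=r_e^{-3}-\lambda'$, we have $w_e>0$ if and only if $r_e<\lambda'^{-1/3}$, and the displayed inequality follows, hence also the statement that each diagonal is longer than each side.

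For (b), we multiply the Dziobek relations for complementary edges. For each of the three partitions of $\{1,2,3,4\}$ into two pairs $\{i,j\},\{k,l\}$, we get
\[
m_1m_2m_3m_4\,w_{ij}w_{kl}=\sigma^2A_1A_2A_3A_4 .
\]
The right-hand side does not depend on the partition, and dividing by the product of the masses yields $w_{12}w_{34}=w_{13}w_{24}=w_{14}w_{23}$.

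For (c), all four side values $w_{12},w_{23},w_{34},w_{14}$ are positive by (a), and $w_e$ is a strictly decreasing function of $r_e$. A longest side is therefore one with smallest $w$, and a shortest side is one with largest $w$. Suppose, for contradiction, that a longest side and a shortest side are adjacent, say $r_{12}$ is maximal and $r_{23}$ is minimal among the sides (the other cases follow by relabeling cyclically or reversing). Then $w_{12}\le w_{34}$ and $w_{12}\le w_{14}$, while $w_{23}\ge w_{34}$ and $w_{23}\ge w_{14}$. From (b), $w_{12}w_{34}=w_{14}w_{23}$, so
\[
w_{12}w_{34}\le w_{14}w_{34}\le w_{14}w_{23}=w_{12}w_{34},
\]
and both inequalities are equalities. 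Since $w_{34},w_{14}>0$, this forces $w_{12}=w_{14}$ and $w_{23}=w_{34}$. Then side $14$ is also longest, and it is opposite the shortest side $23$. Thus a longest side and a shortest side that are opposite always exist, which is the intended reading of (c).

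The only delicate point is this interpretation of (c) when there are ties. We read (c) as asserting that some longest side is opposite some shortest side, and the argument above covers ties in exactly this sense. Everything else is immediate.
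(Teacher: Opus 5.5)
Your parts (a) and (b) are exactly the paper's argument. The one real issue is in (c). You prove only the existential reading, that some longest side is opposite some shortest side, and declare that to be the intended meaning. The paper proves, and later uses, the universal reading: whichever longest side you pick, the side opposite it is a shortest side. This is what the proof of Lemma~\ref{lem:normal}(b) invokes (``label so that $q_1q_2$ is a longest side; by Proposition~\ref{prop:convexsigns}, $q_3q_4$ is a shortest side''). Your stated conclusion does not support that step as written.

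The repair is already in your computation. In the adjacent case you derive $w_{12}=w_{14}$ and $w_{23}=w_{34}$. The second equality says that $q_3q_4$, the side opposite the longest side $q_1q_2$, is itself a shortest side. If a longest side is also a shortest side, then all sides are equal and there is nothing to prove. Adding these two sentences gives the universal form.

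Also, no contradiction is ever reached, so the ``suppose, for contradiction'' framing should be a plain case split. The paper avoids cases altogether. If $q_1q_2$ is a longest side, then $0<w_{12}\le w_{14},w_{23}$, hence $w_{34}=w_{14}w_{23}/w_{12}\ge\max(w_{12},w_{14},w_{23})$. This is the universal statement in one line.
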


\begin{proof}
(a) follows from \eqref{eq:dziobek}, $\sigma<0$, and the signs of $A_iA_j$. For (b), multiply the relations for complementary pairs of edges: $m_1m_2m_3m_4\,w_{12}w_{34}=\sigma^2A_1A_2A_3A_4$, and likewise for the other two pairings. For (c), suppose that $q_1q_2$ is a longest side. Since $w$ is a decreasing function of $r$, we have $0<w_{12}\le w_{14},w_{23}$. Then $w_{34}=w_{14}w_{23}/w_{12}\ge\max(w_{12},w_{14},w_{23})$, so $q_3q_4$ is a shortest side.
\end{proof}

These facts are classical; see \cite[Section~2.2]{CCR19} and the references there.

\section{An identity for the Hessian and a mass-free majorant}\label{sec:identity}

\subsection{The identity}

\begin{proposition}\label{prop:identity}
Let $q$ be a noncollinear planar CC of four bodies, and let $\sigma<0$ be as in Lemma~\ref{lem:dziobek}. Then
\begin{equation}\label{eq:identity}
Q(v)=K(v)-|\sigma|\,|L(v)|^2,\qquad L(v)=\sum_{l=1}^4A_lv_l\in\R^2 ,
\end{equation}
for all $v\in(\R^2)^4$.
\end{proposition}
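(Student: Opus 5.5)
Start from Lemma~\ref{lem:hessian}(a), which writes $Q(v)=K(v)-\sum_e\omega_e|v_e|^2$. Then substitute Dziobek's relations (Lemma~\ref{lem:dziobek}), $\omega_{ij}=m_im_jw_{ij}=\sigma A_iA_j$, to get
\[
Q(v)=K(v)-\sigma\sum_{i<j}A_iA_j\,|v_i-v_j|^2 .
\]
It therefore suffices to prove the purely algebraic identity
\[
\sum_{i<j}A_iA_j\,|v_i-v_j|^2=-\Bigl|\sum_lA_lv_l\Bigr|^2 ,
\]
valid whenever $\sum_lA_l=0$, which holds by Lemma~\ref{lem:areas}(a). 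Indeed, given this identity, $-\sigma\cdot(-|L|^2)=\sigma|L|^2=-|\sigma|\,|L|^2$ because $\sigma<0$, and that is \eqref{eq:identity}.

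To prove the algebraic identity, expand $|v_i-v_j|^2=|v_i|^2+|v_j|^2-2\ip{v_i}{v_j}$ and convert the sum over $i<j$ into half the sum over ordered pairs $i\neq j$. The terms in $|v_i|^2$ collect to $\sum_iA_i|v_i|^2\sum_{j\ne i}A_j=\sum_iA_i|v_i|^2(-A_i)=-\sum_iA_i^2|v_i|^2$, again using $\sum_lA_l=0$. The cross terms give $-\sum_{i\ne j}A_iA_j\ip{v_i}{v_j}$. Adding the two contributions yields $-\sum_{i,j}A_iA_j\ip{v_i}{v_j}=-|L(v)|^2$.

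No step is a genuine obstacle. The only care needed is the bookkeeping of the factor $\tfrac12$ when passing between unordered and ordered pairs (the two symmetric $|v_i|^2$, $|v_j|^2$ terms cancel it), and keeping track of the sign of $\sigma$. A sanity check: for $v\in\cT$ we have $L(v)=0$, since $\sum A_l=0$ handles translations and $\sum A_lq_l=0$ handles $J(q-c)$. We also have $K(v)=0$ there, which is consistent with Lemma~\ref{lem:hessian}(c).
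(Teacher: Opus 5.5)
Your proof is correct and is essentially the paper's argument: substitute Dziobek's relations into Lemma~\ref{lem:hessian}(a), then use $\sum_lA_l=0$ to reduce $\sum_{i<j}A_iA_j|v_i-v_j|^2$ to $-|L(v)|^2$, and finish with $\sigma<0$. The only difference is bookkeeping. You sum over ordered pairs $i\ne j$ and collect the diagonal contribution $-\sum_iA_i^2|v_i|^2$ separately, whereas the paper sums over all $(i,j)$ including $i=j$, so that the $|v_i|^2$ terms vanish outright.
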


\begin{proof}
By Lemma~\ref{lem:hessian}(a) and \eqref{eq:dziobek}, $Q(v)=K(v)-\sigma\sum_{i<j}A_iA_j|v_i-v_j|^2$. Expanding the squares and using $\sum_jA_j=0$,
\[
\sum_{i<j}A_iA_j|v_i-v_j|^2=\frac12\sum_{i,j}A_iA_j\bigl(|v_i|^2+|v_j|^2-2\ip{v_i}{v_j}\bigr)=-\Bigl|\sum_iA_iv_i\Bigr|^2 .
\]
Hence $Q=K+\sigma|L|^2=K-|\sigma||L|^2$.
\end{proof}

Since $K\ge0$ and $L$ takes values in $\R^2$, $Q$ is positive semidefinite on the kernel of $L$, which has codimension at most two. With Lemma~\ref{lem:shape} this gives the following bound, due to Palmore \cite{PalI,PalII}; see \cite[Proposition~21]{Moe15} and \cite[Proposition~5.7]{San26} for other proofs.

\begin{corollary}\label{cor:palmore}
Every noncollinear planar CC of four bodies has Morse index at most $2$.
\end{corollary}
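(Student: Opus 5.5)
The plan is to combine Proposition~\ref{prop:identity} with Lemma~\ref{lem:shape}. By Lemma~\ref{lem:shape}, the Morse index of $q$ equals the index of $Q$ on $(\R^2)^4$. Moreover, $Q$ vanishes on $\cT$ and is positive on $\R(q-c)$, and the three summands are $Q$-orthogonal. It therefore suffices to bound the index of $Q|_W$ by $2$, or equivalently to bound the index of $Q$ itself, since the extra summands contribute nothing negative.

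Let $N\subset(\R^2)^4$ be any subspace on which $Q$ is negative definite. First, $N\cap\ker L=\{0\}$. Indeed, by \eqref{eq:identity}, for $v\in\ker L$ we have $Q(v)=K(v)\ge0$, so no nonzero vector of $\ker L$ can lie in $N$. Second, $L$ maps into $\R^2$, so $\ker L$ has codimension at most $2$ in $(\R^2)^4$. The dimension formula $\dim N+\dim\ker L\le 8+\dim(N\cap\ker L)=8$ then gives $\dim N\le 2$. Thus the index of $Q$ is at most $2$.

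Finally, Lemma~\ref{lem:nocollinear} ensures that a noncollinear CC has no three bodies on a line. Hence Lemma~\ref{lem:dziobek} applies, $\sigma<0$ exists, and the identity holds, so the argument above is valid for every noncollinear planar CC.

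There is no real obstacle. The only point needing care is the identification of the index of $Q$ with the Morse index, which is exactly the content of Lemma~\ref{lem:shape}. That lemma gives the index of $Q$ as a form on all of $(\R^2)^4$, because the $\cT$ summand is null and the $\R(q-c)$ summand is positive.
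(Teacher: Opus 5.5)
Your proof is correct and is essentially the paper's argument: the identity $Q=K-|\sigma|\,|L|^2$ makes $Q$ positive semidefinite on $\ker L$, which has codimension at most two, and Lemma~\ref{lem:shape} turns the resulting bound on the index of $Q$ into the bound on the Morse index. Your explicit dimension count, and your remark that Lemmas~\ref{lem:nocollinear} and~\ref{lem:dziobek} are what make the identity available, spell out steps the paper leaves implicit.
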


\begin{remark}\label{rem:identity}
(i) Santoprete \cite{San26} works in pair space and decomposes the constrained Hessian as $H_C=L_\Delta+\widetilde L$. The gap Laplacian $L_\Delta$ is positive semidefinite, and the transverse part $\widetilde L$ has rank two for noncollinear four-body CCs. Proposition~\ref{prop:identity} is the analogous statement in the coordinates of the bodies: $K$ plays the role of $L_\Delta$, and $-|\sigma||L|^2$ is an explicit square that plays the role of $\widetilde L$, which Santoprete also writes in factored form \cite[Corollary~5.3]{San26}, with coefficients given by the Dziobek areas \cite[Remark~5.5]{San26}. Santoprete characterizes definiteness by an explicit $2\times2$ matrix. Proposition~\ref{prop:majorant}(c) below gives an analogous criterion with the matrix $G$.

(ii) In the language of rigidity theory, $\omega$ is a self-stress of the framework formed by the six edges, $\sum_e\omega_e|v_e|^2$ is its stress energy, and \eqref{eq:identity} says that the stress matrix of $\omega$ is the rank-one matrix $|\sigma|AA^{\mathsf T}$. The computation uses only that the self-stress is unique up to a factor and has product form. It therefore applies verbatim to Dziobek configurations of $d+2$ bodies in $\R^d$, where it gives $Q=K+\sigma|\sum_lA_lv_l|^2$ with $\sum_lA_lv_l\in\R^d$ and the $A_l$ the coefficients of the affine dependence.
\end{remark}

\subsection{Expressing \texorpdfstring{$L$}{L} through the edge variations}
The form $L$ vanishes on $\cT$: $\sum_lA_l=0$ kills translations and $\sum_lA_lJq_l=J\sum_lA_lq_l=0$ kills rotations. By Lemma~\ref{lem:rigid}, $L$ therefore factors through $\dr$. The next lemma gives an explicit factorization.

\begin{lemma}\label{lem:beta}
Suppose that no three of $q_1,\dots,q_4$ are collinear. For an edge $e=ij$ and an index $l\notin e$, let $k$ be the fourth index, so that $ijk$ is the triangle of the bodies other than $l$, and put $N_{e,l}=R_{ik}+R_{jk}-R_{ij}$. Define
\[
\beta_e=-\sum_{l\notin e}\frac{r_e\,N_{e,l}}{8A_l}\,q_l\ \in\R^2 .
\]
Then $L(v)=\sum_e\beta_e\,\dr_e(v)$ and $\sum_e\tau_e\,\dr_e(v)=0$ for all $v$. Consequently $L=\sum_e(\beta_e+\tau_ey)\,\dr_e$ for every $y\in\R^2$.
\end{lemma}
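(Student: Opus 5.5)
Since the statement is linear in $v$, the plan is to check $L(v)=\sum_e\beta_e\,\dr_e(v)$ on a spanning set of $(\R^2)^4$, namely vectors supported at a single body. The other two claims are formal.

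The second claim, $\sum_e\tau_e\,\dr_e(v)=0$, is immediate from Lemma~\ref{lem:rigid}: the image of $\dr$ is the hyperplane $\tau^\perp$. The final assertion, $L=\sum_e(\beta_e+\tau_ey)\,\dr_e$ for every $y\in\R^2$, follows by adding $y\sum_e\tau_e\,\dr_e(v)=0$ to the first identity.

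For the first identity, fix a body $p$, a vector $u\in\R^2$, and let $v$ have $v_p=u$ and $v_i=0$ for $i\ne p$. Then $L(v)=A_pu$. Also $\dr_e(v)=0$ unless $p\in e$, and $\dr_{pj}(v)=\ip{q_p-q_j}{u}/r_{pj}$ with the orientation $q_e=q_p-q_j$ (signs are consistent because $\beta_e$ and $\dr_e$ change sign together under reversing $e$). Inserting the definition of $\beta_e$, the identity becomes the $2\times2$ matrix identity
\[
-\sum_{j\ne p}\ \sum_{l\notin\{p,j\}}\frac{N_{pj,l}}{8A_l}\,q_l\,(q_p-q_j)^{\mathsf T}=A_p\,\mathrm{Id},
\]
to be proved for each $p$. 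The factors $r_e$ have cancelled.

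To verify this I would rewrite the coefficients geometrically. In the triangle $ijk$ opposite $l$, we have $N_{e,l}=R_{ik}+R_{jk}-R_{ij}=2r_{ik}r_{jk}\cos\theta_k$, where $\theta_k$ is the angle at $q_k$. We also have $2|A_l|=r_{ik}r_{jk}\sin\theta_k$. Hence $N_{e,l}/(8A_l)=\pm\frac14\cot\theta_k$, with the sign given by the orientation of that triangle. This makes the left-hand side a sum of cotangent-weighted terms, as in the discrete Laplacian. Before computing, I would check that the left-hand side is invariant under translating all $q_l$ by a common $t$. The change is $-t\bigl(\sum_{j}\sum_{l}\frac{N_{pj,l}}{8A_l}(q_p-q_j)\bigr)^{\mathsf T}$, and the inner vector should vanish by a cotangent identity around each triangle: $\sum\cot\theta_k$-weighted edge vectors close up to a rotation of the edges. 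Given translation invariance, I normalize $q_p=0$. Then I verify the identity by direct expansion, for instance with $p=4$ and the remaining points in general position. By the symmetry of the definitions under relabelling, together with the sign conventions $A_1=[234]$, $A_2=-[134]$, and so on, the case $p=4$ suffices.

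The main obstacle is this last verification. Its content is a polynomial identity in the coordinates once we multiply through by $A_1A_2A_3A_4$, so it is mechanical but sign-sensitive. If the cotangent bookkeeping proves too error-prone, I would fall back on the following. Both sides are rational in $q$ and $\mathrm{SE}(2)$-equivariant, so I would check the identity in the normalized chart $q_1=0$, $q_2=1$ of Section~\ref{sec:shape}, clearing denominators and expanding in the remaining four real coordinates.
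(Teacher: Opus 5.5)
Your reduction is correct, and the identity you reduce to,
\[
-\sum_{j\ne p}\ \sum_{l\notin\{p,j\}}\frac{N_{pj,l}}{8A_l}\,q_l\,(q_p-q_j)^{\mathsf T}=A_p\,\mathrm{Id},
\]
is true, so the plan would succeed. Your treatment of $\sum_e\tau_e\,\dr_e=0$ via Lemma~\ref{lem:rigid} is the same reasoning as the paper's appeal to \eqref{eq:stressdual} with the self-stress $(A_iA_j)$.

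For the first identity, however, you take a computational route and leave its only nontrivial step as an expansion you have not performed. The paper argues conceptually instead. It differentiates $\sum_lA_l(q)\,q_l=0$ (Lemma~\ref{lem:areas}(a)) in the direction $v$ to get $L(v)=-\sum_l dA_l(v)\,q_l$. It then computes $dA_l$ through the side lengths with Heron's formula, $\partial A_l/\partial r_e=r_eN_{e,l}/(8A_l)$, which is exactly the definition of $\beta_e$.

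Your matrix identity is the $q_p$-block of that argument. Heron's formula gives $\nabla_{q_p}A_l=\sum_{j\notin\{p,l\}}\frac{N_{pj,l}}{8A_l}(q_p-q_j)$ for $l\ne p$. So your left-hand side is $-\sum_lq_l(\nabla_{q_p}A_l)^{\mathsf T}$; the term $l=p$ vanishes because $A_p$ does not involve $q_p$. The Jacobian in $q_p$ of $\sum_lA_lq_l=0$ says exactly that this equals $A_p\,\mathrm{Id}$. Your translation-invariance check is likewise $\sum_{l\ne p}\nabla_{q_p}A_l=-\nabla_{q_p}A_p=0$, from $\sum_lA_l=0$. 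So you need neither the reduction to $p=4$, nor the normalization, nor any expansion.

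Your route gives a finite polynomial identity that computer algebra could confirm. The paper's route explains where $\beta_e$ comes from (the chain rule through the side lengths) and avoids the sign bookkeeping you worry about. That bookkeeping has already slipped in two places:
\begin{itemize}
\item in cotangent form, $N_{e,l}/(8A_l)=\pm\frac12\cot\theta_k$, not $\pm\frac14\cot\theta_k$;
\item $\beta_e$ and $\dr_e$ are each invariant under reversing the edge, rather than both changing sign.
\end{itemize}
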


\begin{proof}
The identity $\sum_lA_l(q)\,q_l=0$ of Lemma~\ref{lem:areas}(a) holds for all $q$. Differentiating it in the direction $v$ gives $L(v)=-\sum_l dA_l(v)\,q_l$. Near $q$, $A_l$ is a smooth function of the side lengths of its triangle $ijk$. Heron's formula $16A_l^2=2(R_{ij}R_{ik}+R_{ij}R_{jk}+R_{ik}R_{jk})-R_{ij}^2-R_{ik}^2-R_{jk}^2$ gives $\partial A_l/\partial r_{ij}=r_{ij}N_{ij,l}/(8A_l)$, and similarly for the other two sides, with the sign of $A_l$ taken into account. Substituting $dA_l=\sum_{e\not\ni l}(\partial A_l/\partial r_e)\,\dr_e$ gives the first identity. The second is \eqref{eq:stressdual} applied to the self-stress $(A_iA_j)$.
\end{proof}

\subsection{The majorant}
Let $q$ be a noncollinear planar CC. For $e=ij$ put
\begin{equation}\label{eq:De}
D_e=-\frac{w_e}{3s_eA_iA_j} .
\end{equation}
By \eqref{eq:dziobek}, $\sigma=m_im_jw_e/(A_iA_j)$, and therefore $D_e=|\sigma|/\kappa_e>0$. For $y\in\R^2$ define the positive semidefinite $2\times2$ matrix
\[
S(y)=\sum_eD_e\,(\beta_e+\tau_ey)(\beta_e+\tau_ey)^{\mathsf T} .
\]
The quantities $D_e$, $\beta_e$ and $\tau_e$ are built from the configuration and from $\lambda'$. For a convex CC, Proposition~\ref{prop:convexsigns}(b) determines $\lambda'$ from the configuration:
\begin{equation}\label{eq:lambda}
\lambda'=\frac{s_{12}s_{34}-s_{13}s_{24}}{s_{12}+s_{34}-s_{13}-s_{24}},
\end{equation}
where the denominator is positive by Proposition~\ref{prop:convexsigns}(a). So on convex CCs, $S(y)$ is a function of the configuration alone, and the masses do not enter. Under $q\mapsto\mu q$ one has $D_e\mapsto|\mu|^{-4}D_e$, $\tau_e\mapsto|\mu|^5\tau_e$ and $\beta_e\mapsto|\mu|^2\frac{\mu}{|\mu|}\beta_e$. Hence the matrix $S(|\mu|^{-3}\frac{\mu}{|\mu|}y)$ of the configuration $\mu q$ is the matrix $S(y)$ of $q$ conjugated by a rotation. In particular the trace is unchanged.

\begin{proposition}\label{prop:majorant}
Let $q$ be a noncollinear planar CC of four bodies.
\begin{enumerate}[label=\textup{(\alph*)},leftmargin=2em]
\item For all $v\in(\R^2)^4$ and $y\in\R^2$,
\[
|\sigma|\,|L(v)|^2\le\lmax(S(y))\,K(v)\le\tr S(y)\,K(v),\qquad\text{hence}\qquad Q\ge\bigl(1-\tr S(y)\bigr)K .
\]
\item Let $c_0=\sum_eD_e\tau_e^2>0$, $b_0=\sum_eD_e\tau_e\beta_e\in\R^2$, $y^*=-b_0/c_0$ and $G=S(y^*)=\sum_eD_e\beta_e\beta_e^{\mathsf T}-c_0^{-1}b_0b_0^{\mathsf T}$. Then
\[
S(y)=G+c_0\,(y-y^*)(y-y^*)^{\mathsf T},
\]
so $G\preceq S(y)$ for every $y$, and $\min_y\tr S(y)=\tr G$.
\item Let $\mu_1\ge\mu_2\ge0$ be the eigenvalues of $G$. The generalized eigenvalues of the pair $(Q,K)$ on the five-dimensional space $(\R^2)^4/\cT$ are $1,1,1,1-\mu_1,1-\mu_2$. In particular $Q\ge(1-\mu_1)K$ and the constant is optimal. The Morse index of $q$ is the number of $\mu_i>1$, and $q$ is degenerate if and only if $\mu_1=1$ or $\mu_2=1$.
\end{enumerate}
\end{proposition}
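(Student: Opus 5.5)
The plan is to obtain (a) from Lemma~\ref{lem:beta} and Cauchy--Schwarz, (b) by completing the square in $y$, and (c) by writing $Q$ relative to $K$ as the identity minus a rank-two term.

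\textbf{Part (a).} Fix $y$ and a unit vector $u\in\R^2$. By Lemma~\ref{lem:beta},
\[
\ip{u}{L(v)}=\sum_e\ip{u}{\beta_e+\tau_ey}\,\dr_e(v).
\]
Write each summand as $\bigl(D_e^{1/2}\ip{u}{\beta_e+\tau_ey}\bigr)\bigl(D_e^{-1/2}\dr_e(v)\bigr)$ and apply Cauchy--Schwarz. This gives
\[
\ip{u}{L(v)}^2\le u^{\mathsf T}S(y)u\cdot\sum_e D_e^{-1}\dr_e(v)^2 .
\]
Since $D_e=|\sigma|/\kappa_e$, the last sum equals $K(v)/|\sigma|$. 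Taking the supremum over $u$ gives $|\sigma||L|^2\le\lmax(S(y))K$. Because $S(y)\succeq0$, we have $\lmax\le\tr$, and Proposition~\ref{prop:identity} then gives $Q\ge(1-\tr S(y))K$.

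\textbf{Part (b).} Expand $S(y)=\sum_eD_e\beta_e\beta_e^{\mathsf T}+b_0y^{\mathsf T}+yb_0^{\mathsf T}+c_0yy^{\mathsf T}$ and complete the square in $y$. Positivity of $c_0$ holds because $D_e>0$ and $\tau_e=A_iA_jr_e\ne0$ by Lemma~\ref{lem:areas}(d). The minimum of the trace is then attained at $y=y^*$.

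\textbf{Part (c).} This is the substantive part. By Lemma~\ref{lem:rigid}, $\dr$ induces an isomorphism of $(\R^2)^4/\cT$ onto the hyperplane $\tau^\perp\subset\R^6$. Use the coordinates $z_e=D_e^{-1/2}\dr_e(v)$, so that $K=|\sigma|\,|z|^2$ and the image is the hyperplane $P=(D^{1/2}\tau)^\perp$. Lemma~\ref{lem:beta} gives $L=B z$, where $B$ is the $2\times6$ matrix with columns $D_e^{1/2}(\beta_e+\tau_ey)$; on $P$ this operator does not depend on $y$. Then
\[
Q/|\sigma|=|z|^2-|Bz|^2\quad\text{on }P,
\]
so the generalized eigenvalues are $1$ minus the eigenvalues of $(B|_P)^{\mathsf T}B|_P$. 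That operator has rank at most $2$ on the five-dimensional space $P$, so three eigenvalues equal $1$. Its nonzero eigenvalues coincide with those of $B\Pi B^{\mathsf T}$, where $\Pi$ is the orthogonal projection onto $P$.

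The key computation is $B\Pi B^{\mathsf T}=G$. Choose $y=y^*$. Then the component of $B$ along $D^{1/2}\tau$ is $b_0+c_0y^*=0$, so every row of $B$ already lies in $P$ and $B\Pi B^{\mathsf T}=BB^{\mathsf T}=S(y^*)=G$. This identification is the step I expect to need the most care, since it is where the choice $y^*$ from (b) is used. The remaining claims then follow. The inequality $Q\ge(1-\mu_1)K$ is sharp. The Morse index counts the negative generalized eigenvalues, because Lemma~\ref{lem:shape} identifies the Morse index with the index of $Q$. Degeneracy corresponds to a zero eigenvalue, because $K>0$ off $\cT$.
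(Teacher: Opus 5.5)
Your proposal is correct and follows essentially the same route as the paper: Cauchy--Schwarz against $K$ with weights $D_e=|\sigma|/\kappa_e$ for (a), completing the square for (b), and for (c) reducing to a rank-two perturbation of the identity whose $2\times2$ compression is $G$. The only difference is presentational. The paper obtains $|\sigma|L\mathcal K^{-1}L^{\mathsf T}=G$ from the duality $\sup_{z\in\tau^\perp}=\min_t$, while you observe directly that at $y=y^*$ the rows of $B$ already lie in $(D^{1/2}\tau)^\perp$; these express the same fact.
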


\begin{proof}
(a) Let $u\in\R^2$ be a unit vector. By Lemma~\ref{lem:beta} and the Cauchy--Schwarz inequality,
\[
\ip{u}{L(v)}^2=\Bigl(\sum_e\frac{\ip{u}{\beta_e+\tau_ey}}{\sqrt{\kappa_e}}\cdot\sqrt{\kappa_e}\,\dr_e(v)\Bigr)^2\le\Bigl(\sum_e\frac{\ip{u}{\beta_e+\tau_ey}^2}{\kappa_e}\Bigr)K(v).
\]
Multiplying by $|\sigma|$ and using $D_e=|\sigma|/\kappa_e$ gives $|\sigma|\ip{u}{L(v)}^2\le u^{\mathsf T}S(y)u\,K(v)$. Take the supremum over $u$, use $\lmax\le\tr$ for positive semidefinite matrices, and apply Proposition~\ref{prop:identity}.

(b) This is completing the square. We have $c_0>0$ because $\tau\ne0$.

(c) By Lemma~\ref{lem:rigid}, $\dr$ identifies $V=(\R^2)^4/\cT$ with the hyperplane $\tau^\perp\subset\R^6$. Under this identification $K$ becomes the restriction of $k(z)=\sum_e\kappa_ez_e^2$, and by Lemma~\ref{lem:beta} $L$ becomes $z\mapsto\sum_e\beta_ez_e$. For $g\in\R^6$,
\[
\sup_{0\ne z\in\tau^\perp}\frac{(\sum_eg_ez_e)^2}{k(z)}=\min_{t\in\R}\sum_e\frac{(g_e+t\tau_e)^2}{\kappa_e} .
\]
Indeed, $\sum_eg_ez_e=\sum_e(g_e+t\tau_e)z_e$ on $\tau^\perp$, so the Cauchy--Schwarz inequality gives ``$\le$''. Equality holds for $z_e=(g_e+t^*\tau_e)/\kappa_e$, where $t^*$ is the minimizer, because this $z$ lies in $\tau^\perp$. Taking $g_e=\ip{u}{\beta_e}$ and multiplying by $|\sigma|$ gives
\[
\sup_{v\notin\cT}\frac{|\sigma|\ip{u}{L(v)}^2}{K(v)}=\min_t\sum_eD_e\bigl(\ip{u}{\beta_e}+t\tau_e\bigr)^2=u^{\mathsf T}Gu\qquad(u\in\R^2).
\]
Let $\mathcal K:V\to V^*$ be the isomorphism defined by $K$. The last identity says $|\sigma|\,L\mathcal K^{-1}L^{\mathsf T}=G$. The generalized eigenvalues of $(Q,K)$ are the eigenvalues of $\mathcal K^{-1}Q=\mathrm{Id}-|\sigma|\mathcal K^{-1}L^{\mathsf T}L$. This operator is self-adjoint for $K$. The operator $|\sigma|\mathcal K^{-1}L^{\mathsf T}L$ on the five-dimensional space $V$ has rank at most two, and its characteristic polynomial is $t^3$ times that of $|\sigma|L\mathcal K^{-1}L^{\mathsf T}=G$. This gives the list of generalized eigenvalues. The remaining statements follow from Lemma~\ref{lem:shape}, because $Q$ vanishes on $\cT$.
\end{proof}

\begin{corollary}\label{cor:criterion}
Let $q$ be a noncollinear planar CC of four bodies. If $\tr S(y)<1$ for some $y\in\R^2$, then $Q\ge(1-\tr S(y))K$, and $q$ is a nondegenerate local minimum of $f_m$.
\end{corollary}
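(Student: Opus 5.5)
This follows in two steps from Proposition~\ref{prop:majorant}(a) and Lemma~\ref{lem:shape}. First, fix $y\in\R^2$ with $\tr S(y)<1$. Part (a) of Proposition~\ref{prop:majorant} holds for every noncollinear planar CC and every $y$. It gives $|\sigma|\,|L(v)|^2\le\tr S(y)\,K(v)$ for all $v$. Substituting this into the identity $Q=K-|\sigma||L|^2$ of Proposition~\ref{prop:identity} yields
\[
Q(v)\ \ge\ K(v)-\tr S(y)\,K(v)=\bigl(1-\tr S(y)\bigr)K(v).
\]
This is the asserted inequality. It holds for every $v\in(\R^2)^4$, with no restriction to a complement of $\cT$.

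Second, I convert this into nondegeneracy and minimality. By Lemma~\ref{lem:nocollinear}, a noncollinear CC has no three bodies on a line. Lemma~\ref{lem:rigid} then applies, so $K(v)=0$ if and only if $v\in\cT$, and $K(v)>0$ for $v\notin\cT$. The constant $1-\tr S(y)$ is strictly positive, so $Q(v)>0$ for every $v\notin\cT$. The last sentence of Lemma~\ref{lem:shape} states that this is equivalent to $q$ being a nondegenerate local minimum of $f_m$. For completeness I would note why: $\ker Q=\cT$, since $\cT\subset\ker Q$ by Lemma~\ref{lem:hessian}(c) and no vector outside $\cT$ is in the kernel. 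Moreover $Q|_W$ is positive definite, and by Lemma~\ref{lem:shape} it represents $D^2f_m([q])$ up to the positive factor $I^{1/2}$.

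I expect no real obstacle. The only point needing care is to confirm that Proposition~\ref{prop:majorant}(a) has been proved for an arbitrary $y$, not only for the optimal $y^*$, and for every noncollinear CC rather than only convex ones. Its proof via Lemma~\ref{lem:beta} and Cauchy--Schwarz uses only $D_e=|\sigma|/\kappa_e>0$, and this comes from Dziobek's relations, which hold in the noncollinear case. So the hypothesis of the corollary suffices.
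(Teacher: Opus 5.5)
Your proposal is correct and follows the paper's proof, which simply combines Proposition~\ref{prop:majorant}(a), Lemma~\ref{lem:rigid} and Lemma~\ref{lem:shape} as you do. Your explicit appeal to Lemma~\ref{lem:nocollinear}, to obtain the no-three-collinear hypothesis of Lemma~\ref{lem:rigid}, is a step the paper leaves implicit, and your substitution into Proposition~\ref{prop:identity} is already contained in the ``hence'' of Proposition~\ref{prop:majorant}(a).
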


\begin{proof}
Combine Proposition~\ref{prop:majorant}(a), Lemma~\ref{lem:rigid} and Lemma~\ref{lem:shape}.
\end{proof}

\begin{remark}\label{rem:trace}
The sharp quantity is $\lmax(G)$, while the certificate below uses $\tr S(y)$, which satisfies $\tr S(y)\ge\tr G\ge\lmax(G)$. The trace is a sum of squares with explicit coefficients. It is well suited to interval arithmetic, and a single vector $y$ can serve for a whole box of configurations, so there is no need to locate $y^*$ exactly. The loss is moderate. Numerically, the supremum of $\tr G$ over convex CCs is about $0.5927$, approached at the boundary of the family of kites symmetric about the diagonal $q_1q_3$, where $q_1q_2q_4$ is equilateral and $m_3\to0$. The supremum of $\lmax(G)$ is at least $\frac13$, a limit value at the corner where two of the masses tend to zero (Proposition~\ref{prop:corner}). Numerically it equals $\frac13$ and is not attained (Figure~\ref{fig:lmax} in Section~\ref{sec:cap}). This suggests that the optimal constant in Theorem~B is $\frac23$ (Conjecture~\ref{conj:sharp}).
\end{remark}

\section{Normalized coordinates}\label{sec:normal}

\subsection{The coordinates of Corbera, Cors and Roberts}
Following \cite{CCR19}, for $a,b,c>0$ and $-1<x<1$ put $s=\sqrt{1-x^2}$ and let $q(a,b,c,x)$ be the configuration
\[
q_1=(1,0),\qquad q_2=a\,(x,s),\qquad q_3=(-b,0),\qquad q_4=-c\,(x,s).
\]
Its diagonals $q_1q_3$ and $q_2q_4$ meet at the origin, which is an interior point of both, and $q_2$ lies in the upper half-plane. Hence $q(a,b,c,x)$ is a strictly convex quadrilateral whose vertices $q_1,q_2,q_3,q_4$ are in counterclockwise order. Its squared mutual distances and oriented areas are
\begin{gather*}
R_{12}=1+a^2-2ax,\qquad R_{13}=(1+b)^2,\qquad R_{14}=1+c^2+2cx,\\
R_{23}=a^2+b^2+2abx,\qquad R_{24}=(a+c)^2,\qquad R_{34}=b^2+c^2-2bcx,\\
(A_1,A_2,A_3,A_4)=\tfrac{s}{2}\bigl(b(a+c),\,-c(1+b),\,a+c,\,-a(1+b)\bigr).
\end{gather*}
Consider the six functions
\begin{align*}
g_1&=b^2+2b-a^2+2ax, & g_2&=c^2+2ac-1+2ax, & g_3&=a-c-2x,\\
g_4&=1-b-2ax, & g_5&=a-c+2bx, & g_6&=1-b+2cx .
\end{align*}
Expanding the squared distances gives
\begin{equation}\label{eq:gfact}
\begin{gathered}
R_{13}-R_{12}=g_1,\qquad R_{24}-R_{12}=g_2,\qquad R_{12}-R_{14}=(a+c)\,g_3,\\
R_{12}-R_{23}=(1+b)\,g_4,\qquad R_{23}-R_{34}=(a+c)\,g_5,\qquad R_{14}-R_{34}=(1+b)\,g_6 .
\end{gathered}
\end{equation}
Let
\[
\cC=\bigl\{(a,b,c,x)\in(0,\infty)^3\times(-1,1):\ g_1>0,\ g_2>0,\ g_3\ge0,\ g_4\ge0,\ g_5\ge0,\ g_6\ge0\bigr\}.
\]
By \eqref{eq:gfact}, a point $(a,b,c,x)$ with $a,b,c>0$ lies in $\cC$ if and only if
\begin{equation}\label{eq:order}
r_{13}>r_{12},\quad r_{24}>r_{12},\quad r_{12}\ge r_{14}\ge r_{34},\quad r_{12}\ge r_{23}\ge r_{34}.
\end{equation}
In words, $q_1q_2$ is a longest side, $q_3q_4$ is a shortest side, and both diagonals are longer than $q_1q_2$. This is the normalization used in \cite[Section~3]{CCR19}.

\subsection{The Dziobek function}
For $X,Y>0$ let
\[
\psi(X,Y)=\frac{X+\sqrt{XY}+Y}{(\sqrt X+\sqrt Y)\,(XY)^{3/2}} ,
\]
a positive real-analytic function with $X^{-3/2}-Y^{-3/2}=(Y-X)\,\psi(X,Y)$ and $\psi(1,1)=\frac32$. For $(a,b,c,x)\in\cC$ we compare the numbers $s_e=R_e^{-3/2}$ with $s_{12}$:
\[
\eta_{13}=s_{12}-s_{13},\quad \eta_{24}=s_{12}-s_{24},\quad \eta_{14}=s_{14}-s_{12},\quad \eta_{23}=s_{23}-s_{12},\quad \eta_{34}=s_{34}-s_{12}.
\]
By \eqref{eq:gfact},
\begin{equation}\label{eq:etafact}
\begin{gathered}
\eta_{13}=g_1\,\psi(R_{12},R_{13}),\qquad \eta_{24}=g_2\,\psi(R_{12},R_{24}),\qquad \eta_{14}=(a+c)\,g_3\,\psi(R_{14},R_{12}),\\
\eta_{23}=(1+b)\,g_4\,\psi(R_{23},R_{12}),\qquad \eta_{34}=\bigl((1+b)\,g_4+(a+c)\,g_5\bigr)\,\psi(R_{34},R_{12}).
\end{gathered}
\end{equation}
In particular $\eta_{13},\eta_{24}>0$ and $\eta_{14},\eta_{23},\eta_{34}\ge0$ on $\cC$. Put
\[
\delta=\eta_{13}+\eta_{24}+\eta_{34}=s_{12}+s_{34}-s_{13}-s_{24}>0
\]
and define the \emph{Dziobek function}
\begin{equation}\label{eq:P}
P=\eta_{14}\eta_{23}\,\delta+\eta_{13}\eta_{24}\,(\eta_{14}+\eta_{23}-\eta_{34}) .
\end{equation}
The factored forms \eqref{eq:etafact} are the ones used in the computation. They make the signs of the $\eta_e$ visible and avoid cancellation near the boundary of $\cC$.

\begin{lemma}\label{lem:dziobekfn}
Let $(a,b,c,x)\in\cC$ and $q=q(a,b,c,x)$. Define $\lambda'$ by \eqref{eq:lambda}, that is, $\lambda'=(s_{12}s_{34}-s_{13}s_{24})/\delta$, and put $w_e=s_e-\lambda'$.
\begin{enumerate}[label=\textup{(\alph*)},leftmargin=2em]
\item We have
\begin{gather*}
w_{12}=\frac{\eta_{13}\eta_{24}}{\delta},\qquad w_{34}=\frac{(\eta_{13}+\eta_{34})(\eta_{24}+\eta_{34})}{\delta},\qquad w_{13}=-\frac{\eta_{13}(\eta_{13}+\eta_{34})}{\delta},\\
w_{24}=-\frac{\eta_{24}(\eta_{24}+\eta_{34})}{\delta},\qquad w_{14}=\eta_{14}+w_{12},\qquad w_{23}=\eta_{23}+w_{12}.
\end{gather*}
Consequently $w_{12},w_{14},w_{23},w_{34}>0>w_{13},w_{24}$, $\;w_{12}w_{34}=w_{13}w_{24}$ and $w_{12}w_{34}-w_{14}w_{23}=-P/\delta$.
\item Let $F$ be the polynomial of \cite[Lemma~3.1]{CCR19},
\[
F=(r_{24}^3-r_{14}^3)(r_{13}^3-r_{12}^3)(r_{23}^3-r_{34}^3)-(r_{12}^3-r_{14}^3)(r_{24}^3-r_{34}^3)(r_{13}^3-r_{23}^3).
\]
Then $F\cdot\prod_es_e=-P$.
\item $q$ is a central configuration for some positive masses if and only if $P=0$. In that case the masses are unique up to a common positive factor. Normalizing $\sigma=-1$ in \eqref{eq:dziobek}, they are determined by $m_im_j=-A_iA_j/w_{ij}$ for all $i<j$.
\end{enumerate}
\end{lemma}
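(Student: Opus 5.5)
Parts (a) and (b) are pure algebra in the five quantities $\eta_e$, and part (c) combines (a) with Dziobek's relations and Lemma~\ref{lem:rigid}.

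\emph{Part (a).} I would write every $s_e$ in terms of $s_{12}$:
\[
s_{13}=s_{12}-\eta_{13},\quad s_{24}=s_{12}-\eta_{24},\quad s_{34}=s_{12}+\eta_{34},\quad s_{14}=s_{12}+\eta_{14},\quad s_{23}=s_{12}+\eta_{23}.
\]
Substituting into \eqref{eq:lambda}, the numerator becomes $s_{12}\delta-\eta_{13}\eta_{24}$. Hence $\lambda'=s_{12}-\eta_{13}\eta_{24}/\delta$, which gives $w_{12}=\eta_{13}\eta_{24}/\delta$.

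The other five $w_e$ are $w_{12}$ plus or minus the corresponding $\eta_e$. For instance $w_{34}=\eta_{34}+w_{12}=(\eta_{34}\delta+\eta_{13}\eta_{24})/\delta$, and this factors as $(\eta_{13}+\eta_{34})(\eta_{24}+\eta_{34})/\delta$. Likewise $w_{13}=w_{12}-\eta_{13}=-\eta_{13}(\delta-\eta_{24})/\delta=-\eta_{13}(\eta_{13}+\eta_{34})/\delta$, and $w_{24}$ is symmetric to $w_{13}$.

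The signs then follow from $\eta_{13},\eta_{24}>0$ and $\eta_{14},\eta_{23},\eta_{34}\ge0$ on $\cC$. The identity $w_{12}w_{34}=w_{13}w_{24}$ is visible from the factored forms. Finally,
\[
w_{12}w_{34}-w_{14}w_{23}=w_{12}(\eta_{34}-\eta_{14}-\eta_{23})-\eta_{14}\eta_{23},
\]
and multiplying by $\delta$ gives exactly $-P$.

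\emph{Part (b).} I would use $r_a^3-r_b^3=(s_b-s_a)/(s_as_b)$. In each of the two triple products of $F$, the six distances appear exactly once in the denominators. Therefore
\[
F\prod_es_e=(s_{14}-s_{24})(s_{12}-s_{13})(s_{34}-s_{23})-(s_{14}-s_{12})(s_{34}-s_{24})(s_{23}-s_{13}).
\]
Next I substitute the $\eta$'s: the first product becomes $(\eta_{14}+\eta_{24})\,\eta_{13}\,(\eta_{34}-\eta_{23})$, and the second becomes $\eta_{14}(\eta_{34}+\eta_{24})(\eta_{13}+\eta_{23})$. Expanding this cubic in the $\eta$'s and comparing term by term with $-P$ from \eqref{eq:P} is routine. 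Alternatively, one can note that both sides are affine in each $\eta_e$ and check the coefficients.

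\emph{Part (c).} Suppose first that $q$ is a CC for some masses. It is convex with cyclic order $(1234)$, so Proposition~\ref{prop:convexsigns}(b) applies. Its $\lambda'$ is the one given by \eqref{eq:lambda}, and $w_{12}w_{34}=w_{14}w_{23}$, so $P=0$ by (a).

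Conversely, suppose $P=0$. Then by (a) the three complementary products of the $w_e$ coincide. Set $p_{ij}=-A_iA_j/w_{ij}$. By Lemma~\ref{lem:areas}(d) and the sign pattern in (a), $A_iA_j$ and $w_{ij}$ have opposite signs (negative and positive on the sides, positive and negative on the diagonals), so every $p_{ij}>0$. Since $A_1A_2A_3A_4$ is common to all three pairings, the equality of the $w$-products gives $p_{12}p_{34}=p_{13}p_{24}=p_{14}p_{23}$. This is exactly the condition for six positive numbers to be of the form $m_im_j$, with $m_1^2=p_{12}p_{13}/p_{23}$ and so on, and the $m_i$ are unique once chosen positive.

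With these masses, $\omega_{ij}=m_im_jw_{ij}=-A_iA_j$ is a self-stress by Lemma~\ref{lem:rigid}. So \eqref{eq:equil} holds, and $q$ is a CC with $\lambda=M\lambda'$.

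For uniqueness, any masses making $q$ a CC satisfy \eqref{eq:dziobek} with the same $\lambda'$, since $\lambda'$ is forced by \eqref{eq:lambda}, and with some $\sigma<0$. Thus the products $m_im_j$ are $|\sigma|p_{ij}$, and they determine the $m_i$ up to the common factor $|\sigma|^{1/2}$.

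The only step with real content is the sign bookkeeping in (c), namely that all $p_{ij}>0$; the main chore is the polynomial expansion in (b).
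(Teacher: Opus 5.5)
Your proposal is correct and follows the paper's proof essentially step for step. Part (b) reduces $F\prod_e s_e$ to the same difference of products of $s$-differences, and part (c) uses the same masses $m_im_j=-A_iA_j/w_{ij}$, the equal complementary products and the self-stress of Lemma~\ref{lem:rigid}; the only cosmetic difference is in (a), where you rewrite every $s_e$ in terms of $s_{12}$ and the $\eta_e$ instead of factoring $\delta w_e$ directly as products such as $(s_{12}-s_{13})(s_{12}-s_{24})$.
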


\begin{proof}
(a) Since $\delta=s_{12}+s_{34}-s_{13}-s_{24}$, we get $\delta\,w_{12}=\delta s_{12}-s_{12}s_{34}+s_{13}s_{24}=(s_{12}-s_{13})(s_{12}-s_{24})=\eta_{13}\eta_{24}$. In the same way $\delta w_{34}=(s_{34}-s_{13})(s_{34}-s_{24})$ and $\delta w_{13}=(s_{12}-s_{13})(s_{13}-s_{34})$, $\delta w_{24}=(s_{12}-s_{24})(s_{24}-s_{34})$. Moreover $w_{14}-w_{12}=s_{14}-s_{12}$ and $w_{23}-w_{12}=s_{23}-s_{12}$. This proves the formulas, and the signs follow from the signs of the $\eta_e$. The identity $w_{12}w_{34}=w_{13}w_{24}$ is immediate from the formulas. Finally $w_{34}-w_{12}=\eta_{34}$, so
\[
w_{12}w_{34}-w_{14}w_{23}=w_{12}(\eta_{34}-\eta_{14}-\eta_{23})-\eta_{14}\eta_{23}=-P/\delta .
\]
(b) Writing $r_e^3=1/s_e$, each factor of $F$ becomes a difference of two $s_e$ divided by their product, and both products of three factors have denominator $\prod_es_e$. Hence
\[
F\prod_es_e=(s_{14}-s_{24})(s_{12}-s_{13})(s_{34}-s_{23})-(s_{14}-s_{12})(s_{34}-s_{24})(s_{23}-s_{13}).
\]
Substituting $s_{14}-s_{24}=\eta_{14}+\eta_{24}$, $s_{34}-s_{23}=\eta_{34}-\eta_{23}$, $s_{34}-s_{24}=\eta_{34}+\eta_{24}$, $s_{23}-s_{13}=\eta_{23}+\eta_{13}$ and expanding gives $-P$.

(c) Suppose that $q$ is a CC for positive masses. The relation $w_{12}w_{34}=w_{13}w_{24}$ of Proposition~\ref{prop:convexsigns}(b) is linear in $\lambda'$ and forces $\lambda'$ to be given by \eqref{eq:lambda}. So the $w_e$ of the CC are those of part (a), and $w_{12}w_{34}=w_{14}w_{23}$ gives $P=0$. By \eqref{eq:dziobek}, $m_im_j=\sigma A_iA_j/w_{ij}$, so the six products $m_im_j$, and hence the masses, are determined up to a common factor. Conversely, suppose $P=0$ and put $\mu_{ij}=-A_iA_j/w_{ij}$. By Lemma~\ref{lem:areas}(d), $A_iA_j<0$ on the sides and $A_iA_j>0$ on the diagonals, so part (a) gives $\mu_{ij}>0$ for all six edges. By part (a) and $P=0$, $\mu_{12}\mu_{34}=\mu_{13}\mu_{24}=\mu_{14}\mu_{23}$. Therefore $m_1=(\mu_{12}\mu_{13}/\mu_{23})^{1/2}$, $m_2=\mu_{12}/m_1$, $m_3=\mu_{13}/m_1$, $m_4=\mu_{14}/m_1$ are positive and satisfy $m_im_j=\mu_{ij}$ for all $i<j$. With these masses $\omega_{ij}=m_im_jw_{ij}=-A_iA_j$, which is a self-stress by Lemma~\ref{lem:rigid}. So \eqref{eq:equil} holds, and $q$ is a CC.
\end{proof}

Let
\[
\cE=\{(a,b,c,x)\in\cC:\ P(a,b,c,x)=0\}.
\]
By Lemma~\ref{lem:dziobekfn}(b), $\cE$ is the set denoted $E$ in \cite{CCR19}, defined there by $F=0$. Corbera, Cors and Roberts show that the points of $E$ are exactly the convex CCs, up to relabelling and similarity \cite[Lemma~3.1]{CCR19}. We record this in the form in which we use it.

\begin{lemma}\label{lem:normal}
\begin{enumerate}[label=\textup{(\alph*)},leftmargin=2em]
\item For every $(a,b,c,x)\in\cE$, the configuration $q(a,b,c,x)$ is a counterclockwise convex CC with cyclic order $(1234)$ for positive masses, which are unique up to a common factor.
\item Let $q$ be a convex CC for positive masses $m_1,\dots,m_4$. Then there are a permutation $\pi$ of $\{1,2,3,4\}$, a similarity $T$ of the plane and a point $(a,b,c,x)\in\cE$ such that the configuration $q'$ defined by $q'_{\pi(i)}=T(q_i)$ equals $q(a,b,c,x)$. It is a CC for the masses $m'$ defined by $m'_{\pi(i)}=m_i$.
\item The map $\cE\to\cS$, $(a,b,c,x)\mapsto[q(a,b,c,x)]$, is injective.
\end{enumerate}
\end{lemma}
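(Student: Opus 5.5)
Part (a) follows from Lemma~\ref{lem:dziobekfn}(c). At a point of $\cE$ we have $P=0$, so $q(a,b,c,x)$ is a CC for positive masses, and these masses are unique up to a common factor. The vertices of $q(a,b,c,x)$ are in counterclockwise strictly convex order $q_1,q_2,q_3,q_4$, as noted after the definition of the coordinates. Lemma~\ref{lem:areas}(d) confirms this through the sign pattern $(+,-,+,-)$ of the $A_l$. So the cyclic order is $(1234)$.

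For (b), I would start from a convex CC $q$ and normalize in three steps.
\begin{enumerate}[label=\textup{(\roman*)},leftmargin=2.2em]
\item Relabel by a permutation $\pi$ so that the cyclic order becomes $(1234)$ and $q'_1q'_2$ is a longest side. By Proposition~\ref{prop:convexsigns}(c), the opposite side $q'_3q'_4$ is then a shortest side.
\item Use the remaining freedom. The dihedral symmetries of the square that fix the edge $\{1,2\}$ are the identity and the reflection swapping $1\leftrightarrow2$, $3\leftrightarrow4$. Together with reflection of the plane, they let me arrange the counterclockwise orientation and any required tie-breaking, which is harmless because $\cC$ allows equalities.
\item Apply a similarity $T$ that puts the diagonal intersection at the origin, $q'_1$ at $(1,0)$ and $q'_2$ in the upper half-plane. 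Since the diagonals cross at interior points (Lemma~\ref{lem:areas}(d)), $q'_3$ lies on the negative real axis and $q'_4$ on the ray opposite $q'_2$. This produces $a,b,c>0$ and $x\in(-1,1)$ with $q'=q(a,b,c,x)$.
\end{enumerate}
The ordering conditions \eqref{eq:order} then follow. Both diagonals are longer than every side by Proposition~\ref{prop:convexsigns}(a). The side $q_1q_2$ is longest, and $q_3q_4$ is shortest. Hence the point lies in $\cC$. Relabelling the masses accordingly, $q'$ is a CC for the masses $m'$, because \eqref{eq:cc} is invariant under relabelling and similarity. Lemma~\ref{lem:dziobekfn}(c) then gives $P=0$, so the point lies in $\cE$.

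For (c), suppose $[q(a,b,c,x)]=[q(a',b',c',x')]$ in $\cS$. Then there are $\mu\in\C^*$ and $t\in\C$ with $q'_l=\mu q_l+t$ for every label $l$, so the labels are preserved. A similarity maps the intersection point of the diagonals $q_1q_3$ and $q_2q_4$ to the corresponding intersection point, and both lie at the origin, so $t=0$. Since $q_1=q'_1=1$, we get $\mu=1$. Thus the configurations coincide, and $a,b,c,x$ can be read off from $q_2,q_3,q_4$: $a=|q_2|$, $x=\operatorname{Re}q_2/a$, $b=-q_3$ and $c=|q_4|$. So the map is injective.

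The main obstacle is part (b), and specifically checking that the relabelling in step (i) can always achieve all of \eqref{eq:order} at once while keeping the cyclic order $(1234)$ and counterclockwise orientation. The key input is Proposition~\ref{prop:convexsigns}(c), which guarantees that a longest side and a shortest side are opposite. Once $\{1,2\}$ is longest and $\{3,4\}$ shortest, the inequalities $r_{12}\ge r_{14},r_{23}\ge r_{34}$ hold automatically, and $r_{13},r_{24}>r_{12}$ comes from Proposition~\ref{prop:convexsigns}(a). The orientation is fixed by a reflection, which preserves both the CC property and the side lengths.
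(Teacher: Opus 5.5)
Your proposal is correct and follows the paper's proof essentially step by step: (a) from Lemma~\ref{lem:dziobekfn}(c), (b) by labelling a longest side as $q_1q_2$, using Proposition~\ref{prop:convexsigns} to obtain \eqref{eq:order}, fixing the orientation by a reflection and normalizing by a similarity, and (c) by observing that the similarity must fix the intersection of the diagonals and $q_1$. The extra relabelling $1\leftrightarrow2$, $3\leftrightarrow4$ in your step (ii) is unnecessary, since no tie-breaking is needed, but it does no harm.
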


\begin{proof}
(a) is Lemma~\ref{lem:dziobekfn}(c). (b) Label the bodies so that the cyclic order is $(1234)$ and $q_1q_2$ is a longest side. By Proposition~\ref{prop:convexsigns}, $q_3q_4$ is a shortest side and both diagonals are longer than every side, so \eqref{eq:order} holds. These relations are preserved by similarities. If the vertices are in clockwise order, apply a reflection. Then apply the orientation-preserving similarity that moves the intersection point of the diagonals to the origin and $q_1$ to $(1,0)$. Now $q_3=(-b,0)$ with $b>0$, and $q_2=a(\cos\theta,\sin\theta)$, $q_4=-c(\cos\theta,\sin\theta)$ with $a,c>0$. Since the order is counterclockwise, $0<\theta<\pi$. So the configuration is $q(a,b,c,\cos\theta)$, and $(a,b,c,\cos\theta)\in\cC$ by \eqref{eq:order}. It is a CC because the equations \eqref{eq:cc} are invariant under similarities and under a simultaneous permutation of bodies and masses. Lemma~\ref{lem:dziobekfn}(c) gives $P=0$.
(c) If $q(a',b',c',x')=\mu\,q(a,b,c,x)+t\mathbb 1$ with $\mu\in\C^*$ and $t\in\C$, the similarity maps the intersection point of the diagonals to itself, so $t=0$. It fixes $q_1=(1,0)$, so $\mu=1$.
\end{proof}

\begin{remark}\label{rem:involution}
The normal form is not unique. In the proof of (b), the two endpoints of a longest side can be labelled in either order. Exchanging the labels $1\leftrightarrow2$ and $3\leftrightarrow4$, reflecting, and renormalizing maps $(a,b,c,x)$ to $(1/a,c/a,b/a,x)$. This map preserves $\cE$ and permutes the masses as $(m_1,m_2,m_3,m_4)\mapsto(m_2,m_1,m_4,m_3)$. So an unlabelled similarity class of convex CCs can appear more than once in $\cE$. This does not matter for the estimate of $\tr S$, which is a property of each point of $\cE$.
\end{remark}

\subsection{A priori bounds}

\begin{lemma}[{\cite[Lemma~3.2]{CCR19}}]\label{lem:bounds}
On $\cC$,
\[
c\le a,\qquad b\le1,\qquad ac<b^2+2b,\qquad a^2<b^2+b+1,\qquad a^2+ac+c^2>1 .
\]
Consequently $\frac1{\sqrt3}<a<\sqrt3$, $0<c\le a$ and $0<b\le1$.
\end{lemma}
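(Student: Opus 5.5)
The five inequalities of Lemma~\ref{lem:bounds} will be derived directly from the defining inequalities of $\cC$, using the factorizations \eqref{eq:gfact}. The consequences then follow by elementary algebra.

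\emph{The three simple bounds.} From $g_3\ge0$ and $g_5\ge0$ we get $a-c\ge2x$ and $a-c\ge-2bx$. If $x\ge0$ the first gives $a-c\ge0$; if $x<0$ the second gives $a-c\ge-2bx>0$. Either way $c\le a$. In the same way, $g_4\ge0$ and $g_6\ge0$ give $1-b\ge2ax$ and $1-b\ge-2cx$, and the same case split on the sign of $x$ yields $b\le1$.

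\emph{The quadratic bounds.} For $ac<b^2+2b$, add $g_1>0$ and $g_2>0$ with the appropriate weights. We have
\[
g_1-g_2=b^2+2b-a^2-c^2-2ac+1=(1+b)^2-(a+c)^2=R_{13}-R_{24}.
\]
This alone is not enough. Instead I will use $g_1>0$ in the form $b^2+2b>a^2-2ax$, and bound $a^2-2ax\ge ac$. That bound is equivalent to $a-2x\ge c$, which is exactly $g_3\ge0$ divided by nothing: $g_3=a-c-2x\ge0$ gives $a^2-2ax\ge ac$ after multiplying by $a>0$. Hence $ac\le a^2-2ax<b^2+2b$.

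For $a^2<b^2+b+1$, combine $g_1>0$, i.e.\ $a^2<b^2+2b+2ax$, with $g_4\ge0$, i.e.\ $2ax\le1-b$, to get $a^2<b^2+2b+1-b=b^2+b+1$.

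For $a^2+ac+c^2>1$, use $g_2>0$, i.e.\ $1<c^2+2ac+2ax$. By $g_3\ge0$ we have $2x\le a-c$, so $2ax\le a^2-ac$, and therefore $1<c^2+2ac+a^2-ac=a^2+ac+c^2$.

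\emph{The consequences.} With $b\le1$ we get $a^2<3$. Since $c\le a$, we have $3a^2\ge a^2+ac+c^2>1$, so $a>1/\sqrt3$. The remaining bounds $0<c\le a$ and $0<b\le1$ are already established.

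The only step requiring thought is choosing the right partner inequality for each estimate. Once $g_3$ and $g_4$ are recognized as the bounds on $2x$ (or $2ax$) that eliminate $x$ from $g_1$ and $g_2$, the argument is routine. The sign split in the first step is the one place where care is needed, to ensure that both cases are covered by the pair $(g_3,g_5)$, respectively $(g_4,g_6)$.
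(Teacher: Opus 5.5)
Your proof is correct and follows the paper's argument: the same sign split on $x$ using $(g_3,g_5)$ for $c\le a$ and $(g_4,g_6)$ for $b\le1$, and the same combinations $g_1+a\,g_3>0$, $g_1+g_4>0$ and $g_2+a\,g_3>0$ for the three quadratic bounds, followed by the same elementary consequences. The abandoned detour through $g_1-g_2$ plays no role and can be deleted.
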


\begin{proof}
If $x\ge0$ then $g_3\ge0$ gives $a-c\ge2x\ge0$, and $g_4\ge0$ gives $1-b\ge2ax\ge0$. If $x<0$ then $g_5\ge0$ gives $a-c\ge-2bx>0$, and $g_6\ge0$ gives $1-b\ge-2cx>0$. The remaining inequalities are $g_1+a\,g_3=b^2+2b-ac>0$, $g_1+g_4=b^2+b+1-a^2>0$ and $g_2+a\,g_3=c^2+ac+a^2-1>0$. Finally $a^2<b^2+b+1\le3$, and $1<a^2+ac+c^2\le3a^2$.
\end{proof}

Hence the projection of $\cE\cap\{b\ge\frac18\}$ to $(a,b,c)$ is contained in the box
\[
\Omega_{\mathrm{dir}}=[\tfrac12,\tfrac74]\times[\tfrac18,1]\times[0,\tfrac74].
\]

\subsection{The blow-up chart}\label{sec:chart}
As $b\to0$ on $\cE$, Lemma~\ref{lem:bounds} forces $c\to0$ (because $ac<b^2+2b$ and $a>1/\sqrt3$), and $a\to1$, $x\to\frac12$ by Lemma~\ref{lem:chartbounds} and the bounds on $\xi$ below. So the configuration degenerates to $q_1=(1,0)$, $q_2=(\frac12,\frac{\sqrt3}2)$, $q_3=q_4=0$: the bodies $3$ and $4$ collide at a point that forms an equilateral triangle with $q_1$ and $q_2$. To resolve this corner we use the coordinates $(b,\alpha,\gamma,\xi)$ defined by
\[
a=1+b\alpha,\qquad c=b\gamma,\qquad x=\tfrac12+b\xi .
\]

\begin{lemma}\label{lem:chartbounds}
Let $(a,b,c,x)\in\cC$ with $0<b\le\frac18$. Then
\[
-2<\alpha<\frac{1+b}{2}\le\frac9{16},\qquad 0<\gamma<\frac{2+b}{1-2b}\le\frac{17}{6}.
\]
In particular $a>1-2b\ge\frac34$.
\end{lemma}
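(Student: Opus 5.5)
The bounds follow directly from the inequalities of Lemma~\ref{lem:bounds}, once they are rewritten in terms of $\alpha=(a-1)/b$ and $\gamma=c/b$. I will prove the lower bound on $a$ first, since the bound on $\gamma$ depends on it.

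\emph{Upper bound on $\alpha$.} Lemma~\ref{lem:bounds} gives $a^2<b^2+b+1$, that is, $(a-1)(a+1)<b(1+b)$. If $a<1$ then $\alpha<0$ and there is nothing to prove. If $a\ge1$ then $a+1\ge2$, so $a-1<b(1+b)/2$. Hence $\alpha<(1+b)/2\le\frac9{16}$ for $b\le\frac18$.

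\emph{Lower bound on $\alpha$, i.e.\ $a>1-2b$.} This is the only step that needs care. I use $a^2+ac+c^2>1$ and $ac<b^2+2b$ from Lemma~\ref{lem:bounds}.
\begin{itemize}[leftmargin=2em]
\item \emph{Crude bound.} From $c\le a$ we get $c^2\le ac$. Hence $1<a^2+2ac<a^2+2b^2+4b$, so $a^2>1-4b-2b^2\ge 1-\frac12-\frac1{32}$. This gives $a>0.68$ and $1/a^2<2.2$.
\item \emph{Refined bound.} The function $t\mapsto a^2+at+t^2$ is increasing for $t>0$, and $c<b(b+2)/a$. Therefore
\[
1<a^2+b(b+2)+\frac{b^2(b+2)^2}{a^2}.
\]
\item \emph{Contradiction.} Suppose $a\le1-2b$. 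Then the right-hand side is at most
\[
(1-2b)^2+2b+b^2+2.2\,b^2(b+2)^2 = 1-2b+b^2\bigl(5+2.2(b+2)^2\bigr).
\]
For $b\le\frac18$ we have $(b+2)^2\le(17/8)^2<4.52$, so the bracket is below $15$. Then $b^2\cdot 15\le\frac{15}{8}b<2b$, and the right-hand side is less than $1$. This contradicts the displayed inequality.
\end{itemize}
So $a>1-2b$, i.e.\ $\alpha>-2$. In particular $a>1-2b\ge\frac34$.

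\emph{Bounds on $\gamma$.} Clearly $\gamma=c/b>0$. Using $ac<b(b+2)$ and $a>1-2b$,
\[
c<\frac{b(b+2)}{1-2b},\qquad\text{so}\qquad \gamma<\frac{2+b}{1-2b}.
\]
The right-hand side is increasing in $b$, so for $b\le\frac18$ it is at most $\frac{17/8}{3/4}=\frac{17}{6}$.

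The main obstacle is the lower bound $a>1-2b$. The naive estimate $c^2\le ac$ loses a constant and is not sharp enough at $b=\frac18$, so I bootstrap: the crude bound on $a$ is fed back into the estimate $c<b(b+2)/a$. If the numerical margin turned out tighter than computed above, I would instead bring in $g_2>0$ together with $g_3,g_4\ge0$ to control $x$. I expect the two-step argument to suffice.
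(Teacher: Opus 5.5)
Your proof is correct. Its outline matches the paper's in three places:
\begin{itemize}
\item the upper bound on $\alpha$ comes from $a^2<b^2+b+1$;
\item the bound on $\gamma$ comes from $ac<b^2+2b$ once $a>1-2b$ is known;
\item the lower bound rests on the same key inequality $1<a^2+u+u^2/a^2$ with $u=b^2+2b$, obtained from the monotonicity of $c\mapsto ac+c^2$ and $c<u/a$.
\end{itemize}
The difference is how you handle the fact that $u^2/a^2$ grows as $a$ decreases. Because of this, the supposition $a\le1-2b$ does not by itself bound the right-hand side.

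The paper reads the inequality as $p(a^2)>0$ for the quadratic $p(t)=t^2-(1-u)t+u^2$. It uses $a^2>\frac13$ from Lemma~\ref{lem:bounds} to place $a^2$ above the larger root $t_+$. It then shows $p((1-2b)^2)<0$ through the exact identity $f((1-2b)^2)-1=b(7b-1)(3b^2-3b+2)/(1-2b)^2$.

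You bootstrap instead. The cruder estimate $c^2\le ac$ gives $a^2>1-4b-2b^2\ge\frac{15}{32}$, hence $1/a^2<2.2$. This reduces the contradiction to the one-variable estimate $15b^2\le\frac{15}{8}b<2b$. Your bootstrap step is genuinely needed. With only the a priori bound $1/a^2<3$, your numerical estimate at $b=\frac18$ gives about $1.04>1$, which is no contradiction.

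What each route buys: yours avoids the root analysis and is more elementary, but its numerical margin is tied to the range $b\le\frac18$. The paper's argument is exact, and its factorization shows that the method works precisely for $0<b<\frac17$.
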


\begin{proof}
By Lemma~\ref{lem:bounds}, $a^2<1+b+b^2$, so $a<1+\frac12(b+b^2)$ and $\alpha<\frac12(1+b)$.

For the lower bound put $u=b^2+2b\le\frac{17}{64}<\frac13$. Lemma~\ref{lem:bounds} gives $c<u/a$ and $1<a^2+ac+c^2$. Since $c\mapsto ac+c^2$ is increasing for $c>0$, we get $1<a^2+u+u^2/a^2$, that is, $p(a^2)>0$, where
\[
p(t)=t^2-(1-u)\,t+u^2 .
\]
The roots of $p$ are $t_\pm=\frac12\bigl(1-u\pm\sqrt{(1-3u)(1+u)}\bigr)$, which are real. Since $p(\frac13)=(u+\frac23)(u-\frac13)<0$, we have $t_-<\frac13<t_+$. As $a^2>\frac13$ and $p(a^2)>0$, it follows that $a^2>t_+$. Next, $p(t)=t\,(f(t)-1)$ with $f(t)=t+u+u^2/t$, and a direct computation gives
\[
f\bigl((1-2b)^2\bigr)-1=\frac{b\,(7b-1)\,(3b^2-3b+2)}{(1-2b)^2}<0\qquad(0<b<\tfrac17).
\]
Hence $p((1-2b)^2)<0$, so $(1-2b)^2<t_+<a^2$. This gives $a>1-2b$, that is, $\alpha>-2$.

Finally $c<u/a<u/(1-2b)$, so $\gamma=c/b<(2+b)/(1-2b)$.
\end{proof}

So the projection of $\cE\cap\{0<b\le\frac18\}$ to $(b,\alpha,\gamma)$ is contained in
\[
\Omega_{\mathrm{ch}}=[0,\tfrac18]\times[-2,1]\times[0,3].
\]
We now rewrite all quantities in the chart; the complete list is in \hyperref[app:chart]{the Appendix}. The functions $g_1,\dots,g_4$ are divisible by $b$, and we put $h_i=g_i/b$ for $i\le4$ and $h_i=g_i$ for $i=5,6$:
\begin{align*}
h_1&=2-\alpha+2\xi+b\,(1-\alpha^2+2\alpha\xi), & h_2&=2\gamma+2\xi+\alpha+b\,(\gamma^2+2\alpha\gamma+2\alpha\xi),\\
h_3&=\alpha-\gamma-2\xi, & h_4&=-(1+\alpha+2\xi+2b\alpha\xi),\\
h_5&=1+b\,(\alpha-\gamma+1+2b\xi), & h_6&=1-b+b\gamma+2b^2\gamma\xi .
\end{align*}
The squared lengths are $R_e=1+b\,d_e$ for $e\ne34$ and $R_{34}=b^2\rho^2$, where
\begin{gather*}
d_{12}=\alpha-2\xi+b(\alpha^2-2\alpha\xi),\qquad d_{13}=2+b,\qquad d_{14}=\gamma(b\gamma+2x),\\
d_{23}=2\alpha+b\alpha^2+2x(1+b\alpha)+b,\qquad d_{24}=2(\alpha+\gamma)+b(\alpha+\gamma)^2,\qquad \rho^2=1-2\gamma x+\gamma^2 .
\end{gather*}
The gaps $\eta_e$ are rescaled as
\begin{gather*}
\hat\eta_{13}=\frac{\eta_{13}}b=h_1\psi(R_{12},R_{13}),\qquad \hat\eta_{24}=\frac{\eta_{24}}b=h_2\psi(R_{12},R_{24}),\\
\hat\eta_{14}=\frac{\eta_{14}}b=(a+c)\,h_3\,\psi(R_{14},R_{12}),\qquad \hat\eta_{23}=\frac{\eta_{23}}b=(1+b)\,h_4\,\psi(R_{23},R_{12}),\\
\hat\eta_{34}=b^3\eta_{34}=\rho^{-3}-b^3s_{12},\qquad \hat\delta=b^3\delta=\hat\eta_{34}+b^4(\hat\eta_{13}+\hat\eta_{24}),
\end{gather*}
and then
\begin{equation}\label{eq:Phat}
\hat P=b\,P=\hat\eta_{14}\hat\eta_{23}\,\hat\delta+\hat\eta_{13}\hat\eta_{24}\bigl(b^4(\hat\eta_{14}+\hat\eta_{23})-\hat\eta_{34}\bigr).
\end{equation}
In the chart, $\alpha$, $\gamma$ and $b$ are the independent coordinates, and $\xi$ plays the role of $x$. The inequalities $h_1,h_2>0$ and $h_3,h_4\ge0$ are linear in $\xi$, with coefficients $2a$, $2a$, $-2$ and $-2a$ in front of $\xi$, where $a>0$. So they confine $\xi$ to an explicit bounded interval (Section~\ref{sec:cap}).

The areas scale as $A_1=b\hA_1$ and $A_2=b\hA_2$, with $\hA_1=\frac s2(a+c)$ and $\hA_2=-\frac s2\gamma(1+b)$, while $A_3$ and $A_4$ are unchanged. The quantities $w_e$ scale as $w_{12}=b^5\hw_{12}$, $w_e=b\,\hw_e$ for $e\in\{13,14,23,24\}$ and $w_{34}=b^{-3}\hw_{34}$. Inserting these into the definitions of $D_e$ and $\beta_e$, all negative powers of $b$ cancel, and every $\tau_e$ is divisible by $b$. We put $T_e=\tau_e/b$ and
\[
\widehat S(Y)=\sum_eD_e\,(\beta_e+T_eY)(\beta_e+T_eY)^{\mathsf T},\qquad\text{so that}\qquad \widehat S(Y)=S(Y/b)\quad(b>0).
\]
\hyperref[app:chart]{The Appendix} gives $D_e$, $T_e$ and $\beta_e$ as explicit expressions in $(b,\alpha,\gamma,\xi)$, built from $h_i$, $d_e$, $\rho$, $s$, $\psi$ and the $\hat\eta_e$ by rational operations and square roots. These expressions are real-analytic wherever the square roots have positive arguments and the denominators do not vanish, in particular at points with $b=0$ and $\gamma>0$.

\begin{lemma}\label{lem:bzero}
At $b=0$ we have $x=\frac12$, $\rho^2=\gamma^2-\gamma+1$, and
\[
\hat P(0,\alpha,\gamma,\xi)=-\frac94\,\rho^{-3}\bigl(3\alpha+3\gamma-3\alpha\gamma+2(1+\gamma)\xi\bigr),\qquad
\frac{\partial\hat P}{\partial\xi}(0,\alpha,\gamma,\xi)=-\frac92\,(1+\gamma)\,\rho^{-3}<0 .
\]
Hence, near every point of $\{b=0,\ \gamma>0\}$ where $\hat P=0$, the zero set of $\hat P$ is the graph of a real-analytic function $\xi(b,\alpha,\gamma)$ with $\xi(0,\alpha,\gamma)=\frac{3(\alpha\gamma-\alpha-\gamma)}{2(1+\gamma)}$.
\end{lemma}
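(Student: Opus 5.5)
The plan is to evaluate every ingredient of \eqref{eq:Phat} at $b=0$ directly from the chart formulas, and then to apply the real-analytic implicit function theorem.

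\textbf{Evaluation at $b=0$.} From $a=1+b\alpha$, $c=b\gamma$ and $x=\frac12+b\xi$ we get $a=1$, $c=0$ and $x=\frac12$ at $b=0$, whatever $\xi$ is. Hence $\rho^2=1-2\gamma x+\gamma^2=\gamma^2-\gamma+1$. This is positive for every real $\gamma$, so $\rho^{-3}$ is analytic near $b=0$.

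For $e\ne34$ we have $R_e=1+b\,d_e$, which equals $1$ at $b=0$. Each $\psi$ factor is therefore $\psi(1,1)=\frac32$, and all $\psi$ arguments stay near $1$ for small $|b|$. The chart formulas for the $h_i$ then give
\begin{gather*}
\hat\eta_{13}=\tfrac32(2-\alpha+2\xi),\qquad \hat\eta_{24}=\tfrac32(2\gamma+2\xi+\alpha),\\
\hat\eta_{14}=\tfrac32(\alpha-\gamma-2\xi),\qquad \hat\eta_{23}=-\tfrac32(1+\alpha+2\xi).
\end{gather*}
Since $b^3s_{12}$ vanishes at $b=0$, we also get $\hat\eta_{34}=\rho^{-3}$. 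All $b^4$ terms drop out, so $\hat\delta=\rho^{-3}$.

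Substituting into \eqref{eq:Phat} gives
\[
\hat P(0,\alpha,\gamma,\xi)=-\tfrac94\rho^{-3}\bigl[(\alpha-\gamma-2\xi)(1+\alpha+2\xi)+(2-\alpha+2\xi)(2\gamma+2\xi+\alpha)\bigr].
\]
When the bracket is expanded, the $\alpha^2$, $\alpha\xi$ and $\xi^2$ terms cancel. What remains is $3\alpha+3\gamma-3\alpha\gamma+2(1+\gamma)\xi$, which is the stated formula. The result is affine in $\xi$, and $\rho$ does not depend on $\xi$ at $b=0$. Differentiating in $\xi$ therefore gives $-\frac92(1+\gamma)\rho^{-3}$, which is negative for $\gamma>0$. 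Setting the bracket equal to zero gives the stated value of $\xi(0,\alpha,\gamma)$.

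\textbf{Implicit function step.} By the remark before the lemma, $\hat P$ is built by rational operations and square roots from quantities that are analytic near $b=0$. It is therefore real-analytic in $(b,\alpha,\gamma,\xi)$ on a neighbourhood of any point with $b=0$, $\gamma>0$, and this neighbourhood includes small negative $b$. On such a neighbourhood the square-root arguments $R_e$, $\rho^2$ and $s^2=1-x^2$ are positive: $s^2\approx\frac34$ since $x$ is near $\frac12$, and the denominators involved are nonzero because they come from $\psi$ at arguments near $(1,1)$ and from $\rho^{-3}$.

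At a zero of $\hat P$ on $\{b=0\}$, the derivative $\partial\hat P/\partial\xi$ is nonzero by the computation above. The analytic implicit function theorem then shows that, locally, the zero set of $\hat P$ is the graph of an analytic function $\xi(b,\alpha,\gamma)$.

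\textbf{Main obstacle.} The only delicate step is the algebraic cancellation of the quadratic terms in $\xi$. I will check it by expanding both products term by term. I must also be careful that $x$, $d_{14}$ and $d_{23}$ involve $x$ rather than $\xi$. This causes no trouble at $b=0$, because there $x=\frac12$ is fixed and $R_e=1$ regardless of $\xi$.
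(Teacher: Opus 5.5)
Your proposal is correct and follows the paper's proof: at $b=0$ every $\psi$ factor equals $\frac32$, $a+c=1+b=1$ and $\hat\delta=\hat\eta_{34}=\rho^{-3}$, so $\hat P$ reduces to $\frac94\rho^{-3}(h_3h_4-h_1h_2)$, the quadratic terms in $\xi$ cancel on expansion, and the analytic implicit function theorem gives the graph. (Your bracket is exactly $-(h_3h_4-h_1h_2)$.) Your remarks on analyticity near $b=0$, namely that $\rho^2>0$ and that the arguments of $\psi$ stay near $(1,1)$, agree with the Appendix; the positivity of $s^2$ is not needed, because $\hat P$ does not involve $s$.
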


\begin{proof}
At $b=0$, $R_e=1$ for $e\ne34$, so every $\psi$ equals $\frac32$, $a+c=1+b=1$ and $\hat\delta=\hat\eta_{34}=\rho^{-3}$. Thus $\hat P=\frac94\rho^{-3}(h_3h_4-h_1h_2)$ with $h_1=2-\alpha+2\xi$, $h_2=2\gamma+\alpha+2\xi$, $h_3=\alpha-\gamma-2\xi$ and $h_4=-(1+\alpha+2\xi)$. Expanding, the terms quadratic in $\xi$ cancel, and $h_3h_4-h_1h_2=-(3\alpha+3\gamma-3\alpha\gamma+2(1+\gamma)\xi)$. The last statement is the analytic implicit function theorem.
\end{proof}

So in the chart the closure of $\cE$ is, near $b=0$, an analytic graph over $(b,\alpha,\gamma)$ that crosses $b=0$ transversally. This is why a uniform estimate can hold up to the corner.

\begin{remark}\label{rem:corner}
Near the corner the masses degenerate. By Lemma~\ref{lem:dziobekfn}(c), $m_1m_2=-A_1A_2/w_{12}$ is of order $b^{-3}$, $m_3m_4=-A_3A_4/w_{34}$ is of order $b^{3}$, and the other four products are of order $1$. Hence $m_3/m_1$ and $m_4/m_2$ are of order $b^3$. For example, along $\alpha=-\frac12$, $\gamma=\frac65$ one finds $m_3/m_1\approx0.1506\,b^3$ and $m_4/m_2\approx2.197\,b^3$ as $b\to0$ (Proposition~\ref{prop:corner}(b)). This end of $\cE$ is thus the regime of two small masses near an equilateral Lagrangian point of the two large ones.
\end{remark}

\section{The computer-assisted part}\label{sec:cap}

This section proves the following estimate.

\begin{proposition}\label{prop:cap}
Let $(a,b,c,x)\in\cE$, and let $S$ and $\widehat S$ be the matrices of Sections~\ref{sec:identity} and~\ref{sec:chart} for the configuration $q(a,b,c,x)$.
\begin{enumerate}[label=\textup{(\alph*)},leftmargin=2em]
\item If $b\ge\frac18$, there is $y\in\R^2$ with $\tr S(y)<\frac34$.
\item If $0<b\le\frac18$, there is $Y\in\R^2$ with $\tr\widehat S(Y)<\frac34$.
\end{enumerate}
Consequently $\lmax(G)\le\tr G<\frac34$ at every point of $\cE$.
\end{proposition}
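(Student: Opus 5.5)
This is a computer-assisted estimate, so the plan is to organize a rigorous interval-arithmetic verification over two compact parameter regions and then deduce the final sentence from Proposition~\ref{prop:majorant}(b). The final sentence is immediate once (a) and (b) hold. For $b\ge\frac18$, Proposition~\ref{prop:majorant}(b) gives $\tr G=\min_y\tr S(y)<\frac34$. For $0<b\le\frac18$ we have $\widehat S(Y)=S(Y/b)$, so $\tr G\le\tr S(Y/b)<\frac34$. Finally $\lmax(G)\le\tr G$ since $G\succeq0$.

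\emph{Direct region, part (a).} By Lemma~\ref{lem:bounds}, the projection of $\cE\cap\{b\ge\frac18\}$ to $(a,b,c)$ lies in $\Omega_{\mathrm{dir}}$, and $x\in(-1,1)$. I would subdivide $\Omega_{\mathrm{dir}}$ adaptively into boxes and process each box $B$ as follows.
\begin{enumerate}[label=(\roman*),leftmargin=2em]
\item Use the linear constraints $g_3,\dots,g_6\ge0$ and $g_1,g_2>0$, evaluated over $B$, to shrink the admissible $x$-interval, or to discard $B$ if it is empty.
\item Localize the zero set of $P$. Evaluate $P$ in the factored form \eqref{eq:P} with \eqref{eq:etafact} over $B\times X$ and discard $B$ if $0\notin P(B\times X)$. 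Otherwise apply interval Newton steps in $x$, using an enclosure of $\partial P/\partial x$, to contract $X$.
\item On the remaining box, choose $y$ as the floating-point value of $y^*=-b_0/c_0$ at the box midpoint. Then evaluate $\tr S(y)=\sum_eD_e|\beta_e+\tau_ey|^2$ in interval arithmetic, computing $D_e$ from Lemma~\ref{lem:dziobekfn}(a) through the $w_e$, which have known signs, and computing $\beta_e$, $\tau_e$ from Lemma~\ref{lem:beta}. Accept the box if the upper bound is $<\frac34$, and otherwise bisect.
\end{enumerate}
Boxes meeting the boundary of $\cC$ cause no trouble, because the factored forms keep the signs visible and the expressions are analytic there.

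\emph{Chart region, part (b).} Here I work in $(b,\alpha,\gamma)\in\Omega_{\mathrm{ch}}$ with $\xi$ confined by the linear inequalities $h_1,h_2>0$ and $h_3,h_4\ge0$. I use $\hat P$ from \eqref{eq:Phat}, whose derivative in $\xi$ is bounded away from zero near $b=0$ by Lemma~\ref{lem:bzero}. This makes interval Newton in $\xi$ effective uniformly, including on boxes touching $b=0$. I then bound $\tr\widehat S(Y)$ using the Appendix formulas, which are analytic up to $b=0$ for $\gamma>0$.

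\emph{Expected obstacle.} The main difficulty is the neighbourhood of $\gamma=0$, together with the corner $b\to0$. As $\gamma\to0$ we also have $c\to0$, the areas $\hA_2$ degenerate, and the Appendix expressions may lose analyticity. I would first check whether the constraints ($h_2>0$ together with $P=0$) bound $\gamma$ away from zero on $\cE$ for small $b$. If they do not, I would treat this face analytically, by a further rescaling or by a limit computation in the spirit of Proposition~\ref{prop:corner}. The other risk is the margin: the numerical supremum $\approx0.593$ of $\tr G$ lies on the kite boundary. Since there is room below $\frac34$, fine subdivision there should suffice, but dependency effects in $\psi$ and in the square roots will need care, for example by using centered (mean-value) forms on small boxes.
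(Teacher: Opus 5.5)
Your plan is the paper's proof. It uses the same split into the direct region $b\ge\frac18$ and the blow-up chart, the same localization of the zero set of $P$ (resp.\ $\hat P$) by sign tests and interval Newton steps in the dependent coordinate, the same certification of $\tr S(y)$ with $y$ the floating-point minimizer at the box midpoint and bisection until the bound falls below $\frac34$, and the same deduction of the last sentence from $\widehat S(Y)=S(Y/b)$ and Proposition~\ref{prop:majorant}(b). The obstacle you anticipate at $\gamma\to0$ does not occur: $h_2+h_4=\gamma(2+b\gamma+2b\alpha)-1>0$ (equivalently $g_2+g_4=c^2+2ac-b>0$) keeps $\gamma$ above roughly $0.4$ on $\cC$, so the range step of Lemma~\ref{lem:xrange}(b) already discards such boxes.
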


The last statement follows from (a) and (b), because $\widehat S(Y)=S(Y/b)$ and $\tr G=\min_y\tr S(y)$ by Proposition~\ref{prop:majorant}(b). Part (a) concerns a compact region, and part (b) the end of $\cE$ at the corner $b=0$. Both are proved by a rigorous computation, which we now describe. The programs, the certificate files and the logs are available; see the statement on data and code availability at the end of the paper.

\subsection{What is computed}\label{sec:what}
There are two computations. Each uses three independent coordinates $p$ and one dependent coordinate.
\begin{itemize}[leftmargin=2em]
\item \emph{Direct:} $p=(a,b,c)\in\Omega_{\mathrm{dir}}$, and the dependent coordinate is $x$. The functions are $g_1,\dots,g_6$, the Dziobek function $P$, and $t_y=\tr S(y)=\sum_eD_e\,|\beta_e+\tau_ey|^2$ for $y\in\R^2$.
\item \emph{Chart:} $p=(b,\alpha,\gamma)\in\Omega_{\mathrm{ch}}$, and the dependent coordinate is $\xi$. The functions are $h_1,\dots,h_6$, $\hat P$, and $t_Y=\tr\widehat S(Y)=\sum_eD_e\,|\beta_e+T_eY|^2$ for $Y\in\R^2$.
\end{itemize}
In the direct computation, $P$ is evaluated by \eqref{eq:P} with the factored forms \eqref{eq:etafact}, where $\psi(X,Y)=(X+\sqrt X\sqrt Y+Y)/\bigl((\sqrt X+\sqrt Y)\,X\sqrt X\,Y\sqrt Y\bigr)$. The quantities $D_e$, $\tau_e$ and $\beta_e$ are evaluated from their definitions \eqref{eq:De}, Lemma~\ref{lem:rigid} and Lemma~\ref{lem:beta}, with the $w_e$ of Lemma~\ref{lem:dziobekfn}(a). At a point of $\cE$ these $w_e$ are those of the central configuration $q(a,b,c,x)$, by the proof of Lemma~\ref{lem:dziobekfn}(c), so $t_y$ is the trace of the matrix $S(y)$ of that configuration. In the chart, all quantities are evaluated by the formulas of \hyperref[app:chart]{the Appendix}, and at points with $b>0$ they satisfy
\[
h_i=g_i/b\ (i\le4),\qquad h_i=g_i\ (i=5,6),\qquad \hat P=bP,\qquad t_Y=t_{Y/b}.
\]
In both computations, every function is obtained from the four coordinates by a finite sequence of additions, subtractions, multiplications, divisions, integer powers and square roots.

\begin{lemma}\label{lem:xrange}
\begin{enumerate}[label=\textup{(\alph*)},leftmargin=2em]
\item Let $(a,b,c,x)\in\cC$. Then $\ell\le x\le u$, where
\[
\ell=\max\Bigl\{\frac{a^2-b^2-2b}{2a},\ \frac{1-c^2-2ac}{2a},\ \frac{c-a}{2b},\ \frac{b-1}{2c}\Bigr\},\qquad u=\min\Bigl\{\frac{a-c}{2},\ \frac{1-b}{2a}\Bigr\}.
\]
\item Let $(a,b,c,x)\in\cC$ with $0<b\le\frac18$, and let $(b,\alpha,\gamma,\xi)$ be its chart coordinates. Then $\hat\ell\le\xi\le\hat u$, where $a=1+b\alpha$ and
\[
\hat\ell=\max\Bigl\{\frac{\alpha-2-b(1-\alpha^2)}{2a},\ -\frac{2\gamma+\alpha+b(\gamma^2+2\alpha\gamma)}{2a}\Bigr\},\qquad \hat u=\min\Bigl\{-\frac{1+\alpha}{2a},\ \frac{\alpha-\gamma}{2}\Bigr\}.
\]
\end{enumerate}
\end{lemma}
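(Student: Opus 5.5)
Each bound in Lemma~\ref{lem:xrange} comes from one of the defining inequalities of $\cC$, solved for $x$ (or $\xi$), since each $g_i$ is affine in $x$. We do not need all six inequalities. It is enough to use those whose coefficient of $x$ has a definite sign on $(0,\infty)^3$, and then record the resulting half-lines.

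For part (a) we read off the coefficients of $x$. In $g_1=b^2+2b-a^2+2ax$ the coefficient is $2a>0$, so $g_1>0$ gives $x>(a^2-b^2-2b)/(2a)$. Likewise $g_2>0$ gives $x>(1-c^2-2ac)/(2a)$. Next, $g_5=a-c+2bx\ge0$ with $b>0$ gives $x\ge(c-a)/(2b)$, and $g_6=1-b+2cx\ge0$ with $c>0$ gives $x\ge(b-1)/(2c)$. Together these give $x\ge\ell$. For the upper bound, $g_3=a-c-2x\ge0$ gives $x\le(a-c)/2$, and $g_4=1-b-2ax\ge0$ with $a>0$ gives $x\le(1-b)/(2a)$, so $x\le u$. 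Nothing beyond positivity of $a,b,c$ is used.

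For part (b) we substitute $a=1+b\alpha$, $c=b\gamma$, $x=\tfrac12+b\xi$ and use $h_i=g_i/b$ for $i\le4$, which preserves the signs because $b>0$. Lemma~\ref{lem:chartbounds} gives $a>\tfrac34>0$. From the explicit formulas, $h_1=2-\alpha-b(\alpha^2-1)+2\xi(1+b\alpha)=2-\alpha+b(1-\alpha^2)+2a\xi$. So $h_1>0$ gives $\xi>(\alpha-2-b(1-\alpha^2))/(2a)$. Similarly $h_2=2\gamma+\alpha+b(\gamma^2+2\alpha\gamma)+2a\xi>0$ gives the second lower bound. Next, $h_3=\alpha-\gamma-2\xi\ge0$ gives $\xi\le(\alpha-\gamma)/2$, and $h_4=-(1+\alpha)-2a\xi\ge0$ gives $\xi\le-(1+\alpha)/(2a)$.

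The only step needing care is checking that these chart expressions collect into the form $\text{const}+2a\xi$. This amounts to verifying $g_1/b$, $g_2/b$ and $g_4/b$ directly from the definitions. For instance, $g_4=1-b-2(1+b\alpha)(\tfrac12+b\xi)=-b(1+\alpha+2a\xi)$. The cases $g_1$ and $g_2$ are analogous expansions. This is routine, and it is the only place where the bookkeeping could go wrong.
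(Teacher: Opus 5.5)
Your proposal is correct and is the paper's argument: in (a) you solve all six affine inequalities $g_i$ for $x$ using only $a,b,c>0$, and in (b) you write $h_1,h_2,h_4$ as a constant plus $2a\xi$, use $h_3=\alpha-\gamma-2\xi$, and divide by $2a$. One small point: $a>0$ already follows from $(a,b,c,x)\in\cC$, so the appeal to Lemma~\ref{lem:chartbounds} is unnecessary, though harmless.
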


\begin{proof}
(a) Solve $g_1>0$, $g_2>0$, $g_5\ge0$, $g_6\ge0$, $g_3\ge0$ and $g_4\ge0$ for $x$, using $a,b,c>0$. (b) Write $h_1=2-\alpha+b(1-\alpha^2)+2a\xi$, $h_2=2\gamma+\alpha+b(\gamma^2+2\alpha\gamma)+2a\xi$ and $h_4=-(1+\alpha+2a\xi)$. Since $b>0$, the conditions $g_1>0$, $g_2>0$, $g_4\ge0$ and $g_3\ge0$ are equivalent to $h_1>0$, $h_2>0$, $h_4\ge0$ and $h_3=\alpha-\gamma-2\xi\ge0$. Solve these for $\xi$, using $a>0$.
\end{proof}

\subsection{Ball arithmetic and mean-value enclosures}\label{sec:balls}
We use the ball arithmetic library Arb \cite{Joh17}, through its Python interface \texttt{python-flint}, at a working precision of $64$ bits. A ball is an interval $[m-r,m+r]$ with binary floating-point numbers $m$ and $r\ge0$. Every arithmetic operation, integer power and square root returns a ball that contains the exact results for all points of the input balls. In the version used (\texttt{python-flint} 0.9.0) the following holds, as we checked directly. The square root of a ball that is not contained in $(0,\infty)$ is NaN, except for the exact ball $0$, whose square root is $0$. A quotient by a ball that contains $0$ is NaN. Any arithmetic operation with a NaN operand, including multiplication by the exact $0$, returns NaN. Every comparison involving NaN is false, and so is every comparison involving a ball of infinite radius. Here a comparison such as $V<0$ means that every point of the ball $V$ is negative. We regard NaN as a ball that contains every real number; a NaN result is then a valid but useless enclosure. For interval methods in general see \cite{MKC09,Tuc11}.

We evaluate each function together with its gradient by forward-mode automatic differentiation over balls. The four coordinates are replaced by the four sides $Z_1,\dots,Z_4$ of a box $Z$, with the unit vectors as gradients. The rules are $(u\pm v)'=u'\pm v'$, $(uv)'=u\,v'+v\,u'$, $(u/v)'=(u'-(u/v)\,v')/v$, $(u^n)'=n\,u^{n-1}u'$ and $(\sqrt u)'=u'/(2\sqrt u)$, applied to each of the four components in ball arithmetic. The result is a ball $V$ and four balls $d_1,\dots,d_4$. We call this the \emph{jet evaluation} of the function on $Z$.

\begin{lemma}\label{lem:enclosure}
Let $f$ be given by a finite sequence of the operations above, in which every intermediate result is used, let $Z=\prod_i[\underline z_i,\overline z_i]$ be a box, and let $(V;d_1,\dots,d_4)$ be the jet evaluation of $f$ on $Z$.
\begin{enumerate}[label=\textup{(\alph*)},leftmargin=2em]
\item Suppose that at least one of the balls $d_i$ is not NaN. Then, in the evaluation on $Z$, every square root has an argument ball contained in $(0,\infty)$ and every divisor ball excludes $0$. Consequently $f$ is real-analytic on a neighbourhood of $Z$, and $f(z)\in V$ and $\partial_if(z)\in d_i$ for all $z\in Z$ and all $i$.
\item Under the assumption of (a), let $\bar z\in Z$, let $r_i\ge\max(\overline z_i-\bar z_i,\ \bar z_i-\underline z_i)$, and let $F_{\bar z}$ be the ball evaluation of $f$ at $\bar z$. Then
\[
f(z)\in V\cap\Bigl(F_{\bar z}+\sum_{i=1}^4d_i\,[-r_i,r_i]\Bigr)\qquad\text{for all }z\in Z .
\]
\end{enumerate}
\end{lemma}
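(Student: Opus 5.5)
The plan is an induction along the evaluation sequence of $f$, with a stronger hypothesis than the statement: every intermediate quantity $u$ is enclosed by its jet $(U;u_1',\dots,u_4')$ on $Z$, in the sense that $u$ is real-analytic on a neighbourhood of $Z$, $u(z)\in U$ and $\partial_iu(z)\in u_i'$ for all $z\in Z$. Before that, I will show that a single non-NaN derivative ball forces every operation in the sequence to be well defined. The NaN conventions recorded in Section~\ref{sec:balls} give this directly. If some square root had an argument ball not contained in $(0,\infty)$, its value ball would be NaN, or it would be the exact ball $0$. In the second case the derivative rule $u'/(2\sqrt u)$ divides by a ball containing $0$ and again yields NaN in every component. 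Likewise, a divisor ball containing $0$ makes the quotient and its derivative NaN. NaN propagates through every later arithmetic operation, including multiplication by the exact $0$. Since every intermediate result is used, it reaches the final value and all four derivative components. This contradicts the assumption that some $d_i$ is not NaN, so every square root argument lies in $(0,\infty)$ and every divisor ball excludes $0$ on the whole box.

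With this in hand the induction is straightforward. The coordinates are enclosed by $(Z_i;e_i)$. Suppose $u$ and $v$ are analytic near $Z$ with $u(Z)\subset U$, where $U\subset(0,\infty)$ is closed. Then by continuity $u>0$ on a neighbourhood of the compact set $Z$, so $\sqrt u$ is analytic there, and similarly $u/v$ when $0\notin V$. The exact derivatives follow the chain rules $(u\pm v)'$, $(uv)'$, $(u/v)'=(u'-(u/v)v')/v$, $(u^n)'$ and $(\sqrt u)'=u'/(2\sqrt u)$, evaluated at the same point $z$. The ball operations are inclusion-monotone and enclose exact results pointwise. So each exact value and derivative at $z$ lies in the ball obtained by applying the same formula to the enclosing balls. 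This gives (a).

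For (b), fix $z\in Z$. The box is convex, so the segment from $\bar z$ to $z$ lies in $Z$, and the mean value theorem applied to $t\mapsto f(\bar z+t(z-\bar z))$ gives
\[
f(z)=f(\bar z)+\sum_i\partial_if(\zeta)\,(z_i-\bar z_i)
\]
for some $\zeta$ on the segment. Here $f(\bar z)\in F_{\bar z}$, by the enclosure property of the point evaluation, which is well defined as a sub-case of the above. Also $\partial_if(\zeta)\in d_i$ by (a), and $|z_i-\bar z_i|\le r_i$. So the sum lies in $F_{\bar z}+\sum_id_i[-r_i,r_i]$ by monotonicity of ball arithmetic. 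Combining this with $f(z)\in V$ gives the intersection.

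The one delicate point is the first step. It relies on the empirically checked NaN behaviour of \texttt{python-flint}, and in particular on the exact-zero square root, which is caught only through its derivative, and on the "every intermediate result is used" hypothesis, so that NaN cannot be silently discarded. I would state these dependencies explicitly.
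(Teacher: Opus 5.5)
Your proposal is correct and follows the paper's proof essentially step for step. A bad square root or divisor makes all four gradient components NaN, and this spreads to the final $d_i$ because every intermediate result is used and $0\cdot\mathrm{NaN}=\mathrm{NaN}$; the inclusion property along the evaluation then gives (a), and the mean value theorem on the segment gives (b). One small slip: in the exact-zero square-root case it is the gradient, not the final value, that must become NaN, and that is all the argument needs.
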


The program computes the intersection on the right of (b) with the intersection operation of Arb; we call the result the \emph{mean-value enclosure} of $f$ on $Z$. If the $d_i$ are NaN, the second ball is NaN, and so is the mean-value enclosure, because the intersection of a ball with NaN is NaN. Under the assumption of (a) the two balls intersect, by (b). (Should Arb report an empty intersection, the program would use the second ball alone, which by (b) is also an enclosure; this case does not occur.)

\begin{proof}
(a) Suppose that a square root in the evaluation has an argument ball $u$ that is not contained in $(0,\infty)$. If $u$ contains $0$, then $\sqrt u$ is NaN or, by the inclusion property, a ball that contains $0$. If $u$ is negative, $\sqrt u$ is NaN. In either case every component of the gradient $u'/(2\sqrt u)$ is NaN. The same holds for a quotient by a ball that contains $0$, because then the quotient and its gradient are NaN. By the differentiation rules and the NaN rules, if one operand of an operation has all four gradient components NaN, then so does the result. For the product rule this uses that $0\cdot\mathrm{NaN}$ is NaN. Since $f$ depends on every intermediate result, all four components of its gradient are then NaN, contrary to the assumption. This proves the first statement. So for every $z\in Z$, the exact evaluation of $f$ at $z$ takes square roots of positive numbers and divides by nonzero numbers, since these numbers lie in the corresponding balls. By compactness the same holds on a neighbourhood of $Z$, where $f$ is therefore real-analytic. The inclusion property of ball arithmetic, applied to the evaluation and to the chain rule, gives $f(z)\in V$ and $\partial_if(z)\in d_i$.

(b) By the mean value theorem, $f(z)=f(\bar z)+\sum_i\partial_if(\zeta)(z_i-\bar z_i)$ for some $\zeta$ on the segment from $\bar z$ to $z$, which lies in $Z$. Here $f(\bar z)\in F_{\bar z}$, $\partial_if(\zeta)\in d_i$ and $|z_i-\bar z_i|\le r_i$. Together with (a) this gives the claim.
\end{proof}

\subsection{The procedure}\label{sec:procedure}
Fix one of the two computations, and let $\Omega$ be its region. We divide $\Omega$ into an initial grid: $40\times28\times56$ cubes of side $\frac1{32}$ for $\Omega_{\mathrm{dir}}$, and $8\times96\times96$ boxes of sides $\frac1{64}\times\frac1{32}\times\frac1{32}$ for $\Omega_{\mathrm{ch}}$. All breakpoints are dyadic rationals and hence exact floating-point numbers. Each initial box is processed independently, as follows. For a box $B$ in $p$-space and an interval $X$ of the dependent coordinate, all enclosures below are taken on the box $B\times X$, with its floating-point midpoint $(\bar p,\bar x)$ and radii $r_i$ rounded upward. We write $w(B)$ for the largest side of $B$.

\begin{enumerate}[label=\textup{\arabic*.},leftmargin=2em]
\item \emph{Range.} Evaluate the bounds of Lemma~\ref{lem:xrange} in ball arithmetic on $B$, and round outward to an interval $X_0(B)$ of floating-point numbers: its lower end is the largest of the lower ends of the balls for the terms of $\ell$ (of $\hat\ell$), and its upper end is the smallest of the upper ends of the balls for the terms of $u$ (of $\hat u$). The denominators $2a$ and $2b$ are bounded away from $0$ on $\Omega_{\mathrm{dir}}$, and $a=1+b\alpha\ge\frac34$ on $\Omega_{\mathrm{ch}}$. The term $(b-1)/(2c)$ is used only if $c>0$ on $B$; omitting a term of $\ell$ only weakens the bound. In the direct computation $X_0(B)$ is intersected with $[-1,1]$. If $X_0(B)$ is empty, $B$ is \emph{discarded}.
\item \emph{Localization.} Starting from the list $\{X_0(B)\}$, process the intervals $X$ of the list one at a time.
\begin{enumerate}[label=(\roman*),leftmargin=1.8em]
\item If the mean-value enclosure of some $g_i$ (of some $h_i$ in the chart) is negative, drop $X$.
\item If the mean-value enclosure of $P$ (of $\hat P$ in the chart) is positive or negative, drop $X$.
\item Let $d_1,\dots,d_4$ be the derivative balls of $P$ (of $\hat P$), and suppose that $d_4$ is positive or negative. Put
\[
N=\bar x-\Bigl(F_{(\bar p,\bar x)}+\sum_{i=1}^3d_i\,[-r_i,r_i]\Bigr)\Big/d_4 ,
\]
where $F_{(\bar p,\bar x)}$ is the ball evaluation of $P$ at the midpoint. Replace $X$ by the outward rounding of $X\cap N$, and drop it if this is empty. If $N$ is NaN, $X$ is left unchanged. If the width of $X$ decreased by at least $30\%$, return to (i).
\item If the width of $X$ exceeds $\max(\frac14w(B),10^{-13})$, bisect $X$ at its floating-point midpoint and put both halves in the list. Otherwise keep $X$.
\end{enumerate}
When the list is empty, merge the kept intervals into maximal intervals, the \emph{runs} of $B$. If there are none, $B$ is \emph{discarded}.
\item \emph{Certification.} For each run $X$, compute in floating point the minimizer $y^*$ of $t_y$ (of $t_Y$) at the midpoint of $B\times X$, and let $y$ be the resulting pair of floating-point numbers. Compute the mean-value enclosure of $t_y$ on $B\times X$. The run is \emph{certified} if this enclosure is finite and its upper end, rounded up to a floating-point number $t_{B,X}$, is less than $\frac34$. If every run of $B$ is certified, $B$ is \emph{accepted}, and its runs, their vectors $y$ and their bounds $t_{B,X}$ are recorded.
\item \emph{Subdivision.} Otherwise bisect $B$ at the floating-point midpoint of one of its sides and process both halves. The side is chosen by a heuristic from the sizes of the terms $|d_i|\,r_i$ of the enclosure of $t_y$. The choice affects only the efficiency. The computation \emph{fails} if a box of width $w(B)<10^{-5}$ ($10^{-6}$ in the chart) would have to be bisected, or if an initial box produces more than $400000$ leaves.
\end{enumerate}
The processing of each initial box is deterministic, and any runtime error aborts the whole computation. The output is a certificate file, which lists for every initial box the bisection paths of its leaves, marks each leaf as accepted or discarded, and gives the runs, vectors $y$ and bounds $t_{B,X}$ of the accepted leaves.

\subsection{Correctness}\label{sec:correct}

\begin{lemma}\label{lem:soundness}
Suppose that the direct computation terminates without failure. Let $(a,b,c,x)\in\cE$ with $b\ge\frac18$. Then there are an accepted leaf $B\ni(a,b,c)$ and a run $X\ni x$ of $B$ such that the recorded $y$ and $t_{B,X}$ satisfy $\tr S(y)\le t_{B,X}<\frac34$ at $q(a,b,c,x)$. Likewise, suppose that the chart computation terminates without failure, and let $(a,b,c,x)\in\cE$ with $0<b\le\frac18$ and chart coordinates $(b,\alpha,\gamma,\xi)$. Then there are an accepted leaf $B\ni(b,\alpha,\gamma)$ and a run $X\ni\xi$ of $B$ such that the recorded $Y$ satisfies $\tr\widehat S(Y)\le t_{B,X}<\frac34$.
\end{lemma}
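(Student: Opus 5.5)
The proof is a soundness argument for the procedure of Section~\ref{sec:procedure}: we follow the given point through every step and check that it is never lost. I will give it for the direct computation. The chart computation is identical after replacing $g_i,P,t_y$ by $h_i,\hat P,t_Y$ and using Lemma~\ref{lem:xrange}(b). Fix $(a,b,c,x)\in\cE$ with $b\ge\frac18$, and put $p=(a,b,c)$. By Lemma~\ref{lem:bounds}, $p\in\Omega_{\mathrm{dir}}$, so $p$ lies in some initial box. Each bisection produces two closed halves whose union is the parent, so following the halves containing $p$ gives a path ending at a leaf $B\ni p$. Since the computation terminated without failure, every leaf is accepted or discarded. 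It therefore suffices to show two things: for any box $B\ni p$ and any interval of the list containing $x$, the point $x$ survives into some run; and certification of that run implies the claimed bound.

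\emph{Range.} By Lemma~\ref{lem:xrange}(a), $\ell\le x\le u$. By the inclusion property each term of $\ell$ and $u$ at $p$ lies in its ball, and omitting a term only weakens the bound, so outward rounding gives $x\in X_0(B)$. Also $x\in(-1,1)$. So $X_0(B)$ is nonempty and $B$ is not discarded in step 1.

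\emph{Localization.} This is the main point. I would prove by induction over the list that some interval containing $x$ is always present until it is kept. Consider an interval $X\ni x$, so that $z=(p,x)\in B\times X$.
\begin{itemize}[leftmargin=2em]
\item Step (i) cannot drop $X$. If its mean-value enclosure of $g_i$ is not NaN, Lemma~\ref{lem:enclosure}(b) shows it contains $g_i(z)\ge0$; a NaN ball is never negative.
\item Step (ii) cannot drop $X$, for the same reason with $P(z)=0$.
\item Step (iii) keeps $x$. If $d_4$ has a strict sign, then $d_4$ is not NaN, so Lemma~\ref{lem:enclosure}(a) applies. The mean value theorem in the last variable alone, anchored at $(\bar p,\bar x)$, gives $0=P(z)=P(\bar p,\bar x)+\sum_{i\le3}\partial_iP(\zeta)(p_i-\bar p_i)+\partial_4P(\zeta)(x-\bar x)$ with $\zeta\in B\times X$. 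Solving for $x$ shows $x\in N$. Hence $x\in X\cap N$, and outward rounding keeps it.
\item Step (iv) bisects into closed halves, one of which contains $x$.
\end{itemize}
The list process terminates, since the computation finished. So $x$ lies in a kept interval, and hence in a run $X$ of $B$. In particular $B$ is not discarded, so it is accepted and $X$ is certified.

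\emph{Certification.} The certified enclosure of $t_y$ on $B\times X$ is finite, so its derivative balls are not NaN. Lemma~\ref{lem:enclosure} then shows that $t_y$ is analytic near $B\times X$ and that $t_y(z)$ lies in the enclosure, which is bounded above by $t_{B,X}<\frac34$. By Section~\ref{sec:what} and Lemma~\ref{lem:dziobekfn}(a),(c), at a point of $\cE$ the evaluated $w_e$ are those of the CC $q(a,b,c,x)$, so $t_y(z)=\tr S(y)$ at that configuration. The recorded $y$ is a floating-point vector; its value is irrelevant to the argument, only that the same $y$ is used in the evaluation.

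In the chart case, note also that the chart quantities agree with the original ones for $b>0$: $\hat P=bP$ vanishes exactly where $P$ does, the $h_i$ have the same signs as the $g_i$, and $t_Y=\tr\widehat S(Y)$. Lemma~\ref{lem:chartbounds} puts $(b,\alpha,\gamma)$ in $\Omega_{\mathrm{ch}}$.

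The step needing the most care is the Newton step (iii). It must rely on the NaN conventions so that a NaN $N$ leaves $X$ unchanged, and on the mean value theorem being applied in the dependent variable only.
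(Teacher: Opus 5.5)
Your proof is correct and follows the paper's argument step by step: place $p$ in a leaf, show that $x$ survives the range and localization steps and lands in a run, then read off the certified bound from Lemma~\ref{lem:enclosure} and Section~\ref{sec:what}. The only real difference is in step (iii). You apply the mean value theorem once, along the segment from $(\bar p,\bar x)$ to $z$; this is the multivariate theorem, not one ``in the last variable alone'' as you label it. The paper applies it first in $p$ at fixed $\bar x$ and then in $x$, and both versions give $x\in N$.
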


\begin{proof}
We give the proof for the direct computation and then list the changes for the chart. Let $z=(p,x)\in\cE$ with $p=(a,b,c)$ and $b\ge\frac18$.

\emph{Step 1: $p$ lies in a leaf.} By Lemma~\ref{lem:bounds}, $p\in\Omega_{\mathrm{dir}}$. The initial boxes are closed and cover $\Omega_{\mathrm{dir}}$. Bisecting a box at a floating-point number $m$ replaces a side $[\underline z,\overline z]$ by $[\underline z,m]$ and $[m,\overline z]$, so the two closed halves cover the box. Since the computation did not fail, every box was eventually accepted or discarded. Hence $p$ lies in a leaf $B$, accepted or discarded.

\emph{Step 2: $x$ survives the localization.} Since $z\in\cC$, Lemma~\ref{lem:xrange}(a) and outward rounding give $x\in X_0(B)$. We claim that during step~2 of the procedure, $x$ always lies in an interval of the list or in a kept interval. Suppose that $x\in X$ for an interval $X$ being processed. In (i), $g_i(z)\ge0$ for all $i$ because $z\in\cC$. The $g_i$ are polynomials, so Lemma~\ref{lem:enclosure} applies, and the enclosure of $g_i$ contains $g_i(z)$. So $X$ is not dropped. In (ii), an enclosure that is positive or negative is finite, so its derivative balls are not NaN, and by Lemma~\ref{lem:enclosure} it contains $P(z)=0$. Hence $X$ is not dropped. In (iii), $d_4$ is not NaN, so by Lemma~\ref{lem:enclosure}(a) the function $P$ is analytic near $B\times X$ and $\partial_iP\in d_i$ there. The mean value theorem in $p$, at the fixed value $\bar x\in X$, gives $P(p,\bar x)\in\Pi:=F_{(\bar p,\bar x)}+\sum_{i\le3}d_i[-r_i,r_i]$. The mean value theorem in the last variable gives $0=P(p,x)=P(p,\bar x)+\partial_xP(p,\zeta)\,(x-\bar x)$ for some $\zeta\in X$. Since $\partial_xP(p,\zeta)\in d_4$ and $0\notin d_4$, we get $x=\bar x-P(p,\bar x)/\partial_xP(p,\zeta)\in\bar x-\Pi/d_4=N$. So $x$ lies in the new interval $X\cap N$, and in particular this interval is not empty. In (iv), the two halves of $X$ cover $X$. This proves the claim. Since the list is eventually empty, $x$ lies in a kept interval, hence in a run $X$ of $B$. In particular $B$ is not discarded.

\emph{Step 3: the bound.} So $B$ was accepted, and the run $X\ni x$ was certified with the recorded $y$. The certified enclosure is finite, so its derivative balls are not NaN, and Lemma~\ref{lem:enclosure}(b) gives $t_y(z)\le t_{B,X}<\frac34$. By Section~\ref{sec:what}, $t_y(z)=\tr S(y)$ at $q(a,b,c,x)$.

\emph{The chart.} Now let $z=(b,\alpha,\gamma,\xi)$ with $0<b\le\frac18$, and put $p=(b,\alpha,\gamma)$. By Lemma~\ref{lem:chartbounds}, $p\in\Omega_{\mathrm{ch}}$. Lemma~\ref{lem:xrange}(b) gives $\xi\in X_0(B)$. Since $b>0$, we have $h_i(z)\ge0$ for all $i$, $\hat P(z)=bP(z)=0$ and $t_Y(z)=\tr\widehat S(Y)$. The rest of the argument is unchanged.
\end{proof}

Floating-point arithmetic enters only in the choice of the grid, the midpoints, the radii (which are rounded upward), the vectors $y$, the tolerance in (iv) and the bisections, and in the recorded bounds (which are rounded upward). None of these choices affects the argument.

\begin{proof}[Proof of Proposition~\ref{prop:cap}]
Both computations terminated without failure (Table~\ref{tab:runs}). Lemma~\ref{lem:soundness} gives (a) and (b).
\end{proof}

\subsection{Results}\label{sec:results}
Table~\ref{tab:runs} summarizes the two computations, which were run with $32$ processes on a machine with $32$ hardware threads; the software versions are listed in the statement on data and code availability. The largest certified bound is close to $\frac34$ only because a box is accepted as soon as its bound falls below $\frac34$.

\begin{table}[ht]
\centering
\begin{tabular}{@{}lrr@{}}
\toprule
& direct & chart\\
\midrule
initial boxes & $62\,720$ & $73\,728$\\
accepted leaves & $326\,753$ & $18\,574$\\
discarded leaves & $54\,655$ & $55\,188$\\
maximal bisection depth & $10$ & $1$\\
runs of the dependent coordinate & $326\,774$ & $18\,574$\\
largest certified bound $t_{B,X}$ & $0.749999551$ & $0.749865726$\\
evaluations of $P$ (resp.\ $\hat P$) & $17\,422\,672$ & $379\,455$\\
enclosures of $t_y$ (resp.\ $t_Y$) & $645\,551$ & $18\,608$\\
failures & $0$ & $0$\\
wall-clock time & 245\,s & 10\,s\\
\bottomrule
\end{tabular}
\caption{The two computations of Section~\ref{sec:procedure} at the threshold $\frac34$. In the direct computation, $21$ leaves have two runs and all others have one.}\label{tab:runs}
\end{table}

The following checks are not part of the proof.
\begin{enumerate}[label=(\arabic*),leftmargin=2em]
\item \emph{Re-verification of the certificates.} A separate script reads a certificate file and rebuilds every leaf from its initial box and bisection path. For a discarded leaf it repeats the localization and confirms that nothing is kept. For an accepted leaf it confirms that every kept interval lies in a recorded run, and it recomputes the enclosure of $t_y$ on every recorded run with the recorded $y$. This confirmed all $55\,188$ discarded and $18\,574$ accepted leaves of the chart computation (in 7\,s), and all $54\,655$ discarded and $326\,753$ accepted leaves of the direct computation (in 111\,s). The largest recomputed bounds agree with Table~\ref{tab:runs}.
\item \emph{Structure and sampling.} A second script checks that the initial boxes form the grid and that the leaves of every initial box form a complete binary tree, so that they tile it. It then draws $1500$ random points $p$ in each region, finds in floating point all zeros of $P$ (of $\hat P$) above them, and refines these to 40 digits. This gave $273$ convex CCs in the direct region and $288$ in the chart. Each lies in an accepted leaf and in one of its runs, and $\tr S(y)$ there, with the recorded $y$, is below the recorded bound. At each of them $\lmax(G)$ was also computed from an independent formula, which differentiates the areas by the formula $\partial|A_l|/\partial r_e=\frac12r_e\cot\theta$, where $\theta$ is the angle of the triangle opposite the side $e$, instead of Heron's formula, and it never exceeded $\tr S(y)$. In the chart, $t_Y$ agreed with the direct $t_{Y/b}$ to a relative error below $10^{-37}$.
\item \emph{Identities.} The identity $F\prod_es_e=-P$ of Lemma~\ref{lem:dziobekfn}(b) holds at $2000$ random points of $\cC$ to a relative error below $10^{-35}$. At $12$ random convex CCs, the smallest generalized eigenvalue of $(Q,K)$ was computed from finite-difference Hessians of $\Phi$, with the masses of Lemma~\ref{lem:dziobekfn}(c). It agreed with $1-\lmax(G)$ to $12$ digits, as Proposition~\ref{prop:majorant}(c) predicts. The chart formulas of \hyperref[app:chart]{the Appendix} were compared with the direct formulas in computer algebra, with the square roots treated as independent symbols: $h_i$, $\hat P$, $D_e$, $T_e$ and $\beta_e$ agree identically with $g_i/b$ (resp.\ $g_i$), $bP$, $D_e$, $\tau_e/b$ and $\beta_e$. For $\hat P$ and $D_e$, whose expressions are the largest, four of the five values of $\psi$ that occur were also treated as independent symbols; an identity that holds for all values of these symbols holds in particular for the true ones.
\item \emph{The margin.} In floating point, the largest value of $\tr G$ at $200\,000$ random points of each region is $0.5888$ (direct) and $0.5841$ (chart). A local maximization gives $\sup_\cE\tr G\approx0.59270$, approached at the configurations described in Remark~\ref{rem:trace}. The bound $\frac34$ therefore leaves a margin of about $0.157$ for the overestimation in the enclosures. Figure~\ref{fig:lmax} shows $\tr G$ and $\lmax(G)$ at sampled convex CCs.
\end{enumerate}

\begin{figure}[t]
\centering
\includegraphics[width=\textwidth]{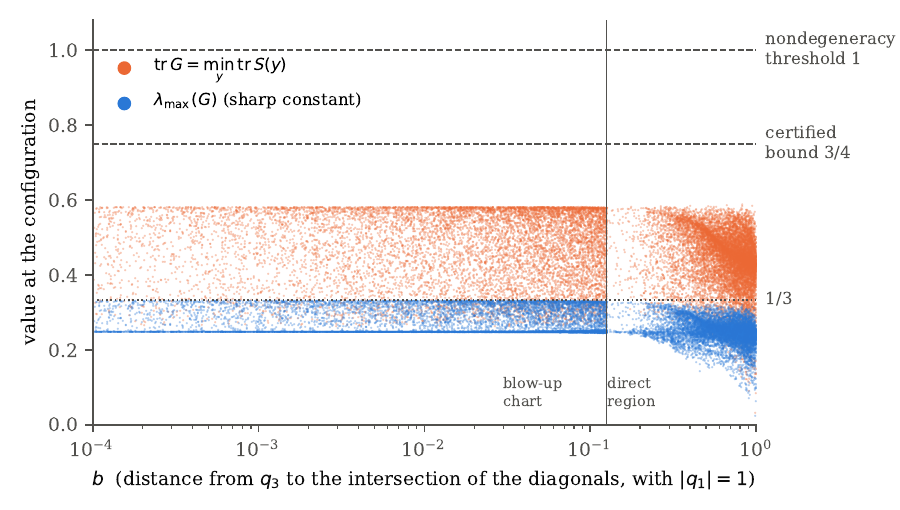}
\caption{The quantities $\tr G=\min_y\tr S(y)$ (orange) and $\lmax(G)$ (blue) at $21\,985$ sampled normalized convex CCs, plotted against the coordinate $b$ on a logarithmic scale (floating point; an illustration, not part of the proof). The vertical line at $b=\frac18$ separates the direct region from the blow-up chart. The horizontal lines mark the nondegeneracy threshold $1$, the certified bound $\frac34$ for $\tr S$, and the value $\frac13$, the largest limit of $\lmax(G)$ at the corner $b=0$ (Proposition~\ref{prop:corner}).}\label{fig:lmax}
\end{figure}

\subsection{A second implementation}\label{sec:indep}
Proposition~\ref{prop:cap} was also verified by a second program, written from a specification without access to the code of the first.

The specification, which we wrote, restates what the computation needs: the configuration $q(a,b,c,x)$, the closure of the region $\cC$ and the function $F$ of Lemma~\ref{lem:dziobekfn}(b); the formulas for $w_e$, $A_l$, $D_e$, $\tau_e$, $\beta_e$ and $\tr S(y)$; the regions $\Omega_{\mathrm{dir}}$ and $\Omega_{\mathrm{ch}}$; and the substitution of the blow-up chart, with the orders in $b$ of these quantities but not their rescaled forms. It states the two claims of Proposition~\ref{prop:cap}, allows the ranges of $\alpha$ and $\gamma$ in Lemma~\ref{lem:chartbounds} to be taken as given, and mentions that $\tr G$ is at most about $0.59$ on $\cE$. The two programs therefore share the reduction of Sections~\ref{sec:identity} and~\ref{sec:normal}, and in both the conclusion for all of $\cE$ rests on the a priori bounds of Lemmas~\ref{lem:bounds} and~\ref{lem:chartbounds}. They also use the same ball arithmetic library, Arb through \texttt{python-flint}~0.9.0, and they were run on the same machine.

The two programs share no code. The specification forbids reading the first program, and the record of the development, which is included in the repository with the specification, states that only the specification and the second program's own files were used, apart from one text search after all the runs had finished, from which nothing was used. The second program, which takes the threshold as a parameter, differs from the first in the following respects.
\begin{itemize}[leftmargin=2em]
\item It uses its own forward-mode differentiation over Arb balls, at a precision of $96$ bits.
\item It derives the rescaled formulas of the chart itself, and checks them in computer algebra and numerically against the direct formulas.
\item In the direct region it localizes the zeros of the function $F$ of Lemma~\ref{lem:dziobekfn}(b) itself, with $r_e^3=R_e\sqrt{R_e}$, instead of $P$. In the chart it uses $\widetilde F=F/b^2$, which extends analytically to $b=0$ with $\widetilde F(0,\alpha,\gamma,\xi)=\frac94\bigl(3\alpha+3\gamma-3\alpha\gamma+2(1+\gamma)\xi\bigr)$ (compare Lemma~\ref{lem:bzero}).
\item It bounds the trace in sheared coordinates. On a box $B$ with centre $m$, let $x_c$ be a floating-point zero of $F$ above $m$, let $\varphi_k=-\partial_{p_k}F/\partial_xF$ at $(m,x_c)$, and put $x_{\mathrm{lin}}(p)=x_c+\sum_k\varphi_k(p_k-m_k)$. Every zero $(p,x)$ of $F$ with $p\in B$ in the interval under consideration satisfies $\varepsilon=x-x_{\mathrm{lin}}(p)\in E$, for an interval $E$ obtained by interval Newton steps in the variables $(p,\varepsilon)$. The trace is then bounded by a mean-value form in $(p,\varepsilon)$ over $B\times E$. In these variables the derivatives in $p$ are approximately derivatives along the zero set, which are small, so the bound is governed by the width of $E$ rather than by the range of $x$ over $B$.
\item It uses other initial grids ($20\times14\times28$ cubes of side $\frac1{16}$ in the direct region, $8\times24\times24$ boxes of sides $\frac1{64}\times\frac18\times\frac18$ in the chart) and other subdivision rules.
\end{itemize}
Table~\ref{tab:indep} lists its runs at the threshold $\frac34$ and at the smaller threshold $\frac{19}{32}$; all four terminated with every box accepted or discarded. The runs at $\frac{19}{32}$ show that $\tr G<\frac{19}{32}$ on $\cE$, and hence, by Proposition~\ref{prop:majorant}, $Q\ge\frac{13}{32}K$ at every convex CC. The threshold $\frac{19}{32}=0.59375$ is within $0.0011$ of the numerical supremum $0.59270$ of $\tr G$.

\begin{table}[ht]
\centering
\begin{tabular}{@{}llrrrrr@{}}
\toprule
region & threshold & accepted & discarded & max.\ depth & largest bound & time\\
\midrule
direct & $3/4$ & $190\,384$ & $15\,630$ & $13$ & $0.7499990406$ & 42\,s\\
chart & $3/4$ & $20\,189$ & $4\,381$ & $6$ & $0.7499902102$ & 9\,s\\
direct & $19/32$ & $804\,708$ & $42\,166$ & $19$ & $0.5937499913$ & 173\,s\\
chart & $19/32$ & $95\,972$ & $6\,325$ & $10$ & $0.5937499972$ & 41\,s\\
\bottomrule
\end{tabular}
\caption{The second implementation (16 processes). No box was left unproved.}\label{tab:indep}
\end{table}

Two tests support the soundness of the second program. First, a sampling test of its accepted leaves, like check~(2) above, found no failure; at the threshold $\frac{19}{32}$ the smallest gap between a certified bound and the value of $t_Y$ at a sampled CC in the same leaf was $0.0011$ (direct) and $0.0068$ (chart). Second, it was run where the claim fails: on one initial cell of each region that contains a local maximizer of $\tr G$ found in floating point, at thresholds below $\sup_\cE\tr G$, namely $\frac9{16}$ and $\frac{37}{64}$ on the direct cell and $\frac9{16}$ on the chart cell. Within a budget of $4000$ boxes it accepted no box in any of the three runs. At the zeros of $F$ above the centres of the unproved boxes, $\tr G$ took values between $0.5764$ and $0.5918$ in the direct cell and between $0.5832$ and $0.5840$ in the chart cell, so it exceeded the threshold in $1445$ of $1445$ cases (resp.\ $1444$ of $1445$) in the direct cell and in $778$ of $778$ cases in the chart cell.

\section{Proofs of Theorems~B and~A}\label{sec:proofs}

\begin{proof}[Proof of Theorem~B]
Let $q$ be a convex CC for the masses $m$, and let $\pi$, $T$, $(a,b,c,x)\in\cE$, $q'=q(a,b,c,x)$ and $m'$ be as in Lemma~\ref{lem:normal}(b). Write $T(z)=T_0z+t$, where $T_0$ is a real-linear map with $|T_0u|=\nu|u|$ for some $\nu>0$, so that $\ip{T_0u}{T_0u'}=\nu^2\ip{u}{u'}$. Define a linear isomorphism $\Lambda$ of $(\R^2)^4$ by $(\Lambda v)_{\pi(i)}=T_0v_i$. We have $r'_{\pi(i)\pi(j)}=\nu r_{ij}$ and $M'=M$, hence $U(q')=\nu^{-1}U(q)$ and $I(q')=\nu^2I(q)$, and the constant $\lambda'$ of $q'$ equals $\nu^{-3}$ times that of $q$. The corresponding pair potential $\tilde\varphi(r)=\frac1r+\frac{\nu^{-3}\lambda'}{2}r^2$ satisfies $\tilde\varphi(\nu r)=\nu^{-1}\varphi(r)$. Let $\Phi'$ be the function $\Phi$ of $q'$, and let $p\mapsto\tilde p$, $\tilde p_{\pi(i)}=T(p_i)$, be the affine map with linear part $\Lambda$ that sends $q$ to $q'$. Then $\Phi'(\tilde p)=\nu^{-1}\Phi(p)$, and differentiating twice at $q$ gives $Q'(\Lambda v)=\nu^{-1}Q(v)$, where $Q'$ is the form $Q$ of $q'$. Similarly $\kappa'_{\pi(i)\pi(j)}=\nu^{-3}\kappa_{ij}$ and $\dr'_{\pi(i)\pi(j)}(\Lambda v)=\ip{T_0q_{ij}}{T_0v_{ij}}/(\nu r_{ij})=\nu\,\dr_{ij}(v)$, where we used that $\dr_e$ does not depend on the orientation of the edge $e$. Hence $K'(\Lambda v)=\nu^{-1}K(v)$.

By Proposition~\ref{prop:cap} there is $y\in\R^2$ with $\tr S(y)<\frac34$, where $S$ is the matrix of Section~\ref{sec:identity} for the configuration $q'$; if $b\le\frac18$ we apply part (b) and put $y=Y/b$, so that $S(y)=\widehat S(Y)$. By Section~\ref{sec:what}, $S$ is the matrix of the CC $q'$. Proposition~\ref{prop:majorant}(a) gives $Q'\ge(1-\tr S(y))K'\ge\frac14K'$, because $K'\ge0$. Since $\Lambda$ is bijective, $Q\ge\frac14K$. By Lemma~\ref{lem:rigid}, $K(v)>0$ for $v\notin\cT$, so $Q(v)>0$ for $v\notin\cT$. By Lemma~\ref{lem:shape}, $q$ is a nondegenerate local minimum of $f_m$, and its Morse index is $0$.
\end{proof}

\subsection{The covering argument}
Let $\mathcal M=\{m\in(0,\infty)^4:\ m_1+m_2+m_3+m_4=1\}$, an open convex subset of an affine hyperplane, and let
\[
\cS^+=\{[q]\in\cS:\ A_1(q)>0,\ A_2(q)<0,\ A_3(q)>0,\ A_4(q)<0\}.
\]
By Lemma~\ref{lem:areas}(b) the signs of the $A_l$ depend only on the class $[q]$, so $\cS^+$ is a well-defined open subset of $\cS$. By Lemma~\ref{lem:areas}(c) it is disjoint from $\Delta$. If all $A_l$ are nonzero, then no three of the $q_l$ are collinear. So, by Lemma~\ref{lem:areas}(d), $\cS^+$ is the set of classes of counterclockwise strictly convex quadrilaterals with vertices $q_1,q_2,q_3,q_4$ in this order. Complex conjugation induces a real-analytic involution $\iota[q]=[\bar q]$ of $\cS$, which is well defined because $\overline{\mu q+t\mathbb 1}=\bar\mu\bar q+\bar t\mathbb 1$. It preserves $\Delta$ and satisfies $f_m\circ\iota=f_m$. By Lemma~\ref{lem:areas}(b) it maps $\cS^+$ onto the set where all four signs are reversed, which is disjoint from $\cS^+$.

The function $f(s,m)=f_m(s)$ is real-analytic on $(\cS\setminus\Delta)\times(0,\infty)^4$. Let
\[
\cZ=\{(s,m)\in\cS^+\times\mathcal M:\ s\text{ is a critical point of }f_m\},
\]
and let $\mathrm{pr}:\cZ\to\mathcal M$ be the projection. By Section~\ref{sec:shape}, $\mathrm{pr}^{-1}(m)$ is in bijection with the set of classes of counterclockwise convex CCs with cyclic order $(1234)$ for the masses $m$.

\begin{lemma}\label{lem:local}
$\cZ$ is a real-analytic submanifold of $\cS^+\times\mathcal M$ of dimension $3$, and $\mathrm{pr}$ is a local real-analytic diffeomorphism.
\end{lemma}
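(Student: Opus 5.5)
The plan is to realize $\cZ$ as the zero set of a real-analytic map and apply the implicit function theorem in the $s$-variable, using Theorem~B for the required nondegeneracy. The manifold $\cS^+\times\mathcal M$ has dimension $4+3=7$. Locally, near a point $(s_0,m_0)\in\cZ$, choose an affine chart of $\cS$ around $s_0$ (say $q_1=0$, $q_2=1$, identifying a neighbourhood with an open set of $\C^2\cong\R^4$). In this chart consider the map
\[
\Psi(s,m)=d_sf(s,m)\in\R^4 ,
\]
the differential of $f_m$ in the chart coordinates. It is real-analytic on (chart domain $\cap\,\cS^+)\times\mathcal M$, since $f$ is real-analytic on $(\cS\setminus\Delta)\times(0,\infty)^4$ and $\cS^+\cap\Delta=\emptyset$. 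By definition, $\cZ$ is locally the zero set $\Psi^{-1}(0)$.

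The key step is that the partial derivative $D_s\Psi(s_0,m_0)$ is invertible. At a critical point this partial derivative is the Hessian $D^2f_{m_0}(s_0)$ in chart coordinates, which is intrinsic at critical points. A point of $\cZ$ with $s_0\in\cS^+$ is the class of a counterclockwise strictly convex CC for the masses $m_0$ (Section~\ref{sec:shape} identifies critical points with CCs). By Theorem~B and Lemma~\ref{lem:shape}, $D^2f_{m_0}(s_0)$ corresponds to $I^{1/2}Q|_W$, and $Q|_W$ is positive definite since $Q\ge\frac14K>0$ off $\cT$ and $W\cap\cT=0$. Hence the Hessian is nonsingular. The real-analytic implicit function theorem then yields a neighbourhood $V\times N$ of $(s_0,m_0)$ and an analytic map $g:N\to V$ such that $\cZ\cap(V\times N)=\{(g(m),m):m\in N\}$. (Shrink $V$ so that it stays inside $\cS^+$, which is open.)

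It follows that $\cZ$ is locally an analytic graph over an open subset of the $3$-dimensional $\mathcal M$, so it is a real-analytic embedded submanifold of dimension $3$. The restriction of $\mathrm{pr}$ to this graph is inverse to $m\mapsto(g(m),m)$, hence a local real-analytic diffeomorphism.

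I do not expect a real obstacle. The only care needed is to verify that the Hessian of $f_m$ in a chart agrees with the intrinsic form of Lemma~\ref{lem:shape}. This holds because at a critical point the second derivative is chart-independent. One should also note that $m$ ranges in the hyperplane $\mathcal M$ rather than in $(0,\infty)^4$, which is harmless since $\mathcal M$ is an open subset of an affine $3$-space.
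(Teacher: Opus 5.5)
Your proposal is correct and follows essentially the same route as the paper: locally $\cZ$ is the zero set of the chart gradient of $f$, the $s$-derivative of that gradient is the Hessian $D^2f_{m_0}(s_0)$, which Theorem~B makes invertible, and the real-analytic implicit function theorem then shows $\cZ$ is locally the graph of an analytic map over an open subset of the three-dimensional $\mathcal M$. Your extra remarks, that the chart Hessian is intrinsic at critical points and that $Q|_W$ is positive definite because $W\cap\cT=0$, are harmless elaborations of what Theorem~B and Lemma~\ref{lem:shape} already state.
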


\begin{proof}
Let $(s_0,m_0)\in\cZ$, and choose a real-analytic chart $\chi$ of $\cS^+$ near $s_0$. Near $(s_0,m_0)$, the set $\cZ$ is the zero set of the map $(u,m)\mapsto\nabla_uf(\chi^{-1}(u),m)\in\R^4$. Its derivative with respect to $u$ at $(\chi(s_0),m_0)$ is the matrix of $D^2f_{m_0}(s_0)$ in the chart. This matrix is invertible by Theorem~B, since $s_0$ is the class of a convex CC. By the real-analytic implicit function theorem there are neighbourhoods $\mathcal U$ of $s_0$ and $\mathcal N$ of $m_0$ and a real-analytic map $g:\mathcal N\to\mathcal U$ such that $\cZ\cap(\mathcal U\times\mathcal N)$ is the graph of $g$. The claims follow.
\end{proof}

\begin{lemma}\label{lem:shub}
Let $m^n\in(0,\infty)^4$ converge to $m^\infty\in(0,\infty)^4$. For each $n$ let $q^n$ be a CC for the masses $m^n$ with centre of mass $0$ and $I(q^n)=1$, both computed with the masses $m^n$. Then a subsequence of $(q^n)$ converges to a CC $q^\infty$ for the masses $m^\infty$. In particular $q^\infty$ is collision-free.
\end{lemma}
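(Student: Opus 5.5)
The plan is to prove compactness and then rule out collisions in the limit. This is Shub's lemma: the set of central configurations stays away from the collision set. First, the normalization $\sum_i m^n_iq^n_i=0$ and $\sum_i m^n_i|q^n_i|^2=1$ bounds each $|q^n_i|$ by $(\min_i m^n_i)^{-1/2}$. This is uniformly bounded because $m^n\to m^\infty$ with positive entries. Therefore a subsequence converges to some $q^\infty\in(\R^2)^4$. Passing to the limit in the constraints gives centre of mass $0$ and $I(q^\infty)=1$ for the masses $m^\infty$. In particular $q^\infty$ is not totally collapsed.

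The main step is to show that $q^\infty$ is collision-free. Once that is done, $U$ and $\nabla U$ are continuous near $(q^\infty,m^\infty)$, so $\lambda^n=U(q^n)/I(q^n)=U(q^n)$ converges to $U(q^\infty)$. Letting $n\to\infty$ in \eqref{eq:cc} then shows that $q^\infty$ is a CC for $m^\infty$.

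For the collision exclusion I would argue by contradiction with the standard cluster argument. Suppose the set of collisions in $q^\infty$ is nonempty. Pick a cluster $C\subsetneq\{1,\dots,4\}$ of indices whose limits coincide at a point $p$, with $|C|\ge2$; it is a proper subset because $I(q^\infty)=1$. First, $\lambda^n$ is bounded. To see this, note that $\lambda^n=U(q^n)$, and write the equation for $i\in C$ in the form $\nabla_{q_i}U=-\lambda^n m_i q_i$. Pair the equations for $i\in C$ with $q^n_i-c^n_C$, where $c^n_C$ is the centre of mass of the cluster. The interaction terms inside the cluster give, by homogeneity, $-\sum_{i<j\in C}m_im_j/r_{ij}=-U_C(q^n)$. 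The interactions with bodies outside $C$ are bounded, since those distances stay bounded below. The right-hand side is $-\lambda^n I_C$, where $I_C=\sum_{i\in C}m_i|q_i-c_C|^2$. This yields $U_C(q^n)=\lambda^nI_C(q^n)+O(\operatorname{diam}C^n)$.

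Now $U_C\to\infty$ while $I_C\to0$, so $\lambda^n\to\infty$ at a rate at least $U_C/I_C\gtrsim(\operatorname{diam})^{-3}$. On the other hand, $\lambda^n=U(q^n)=U_C+O(1)\lesssim(\operatorname{min\ distance})^{-1}$. I expect the main obstacle to be reconciling these two bounds when the cluster has internal sub-clusters at different scales. For that I would choose $C$ together with a scale $\varepsilon_n$ so that $C$ is a maximal cluster at scale $\varepsilon_n$, with all mutual distances in $C$ comparable to $\varepsilon_n$ after passing to the finest-scale cluster. That comparability gives $U_C\sim\varepsilon_n^{-1}$ and $I_C\sim\varepsilon_n^2$, hence $\lambda^n\sim\varepsilon_n^{-3}$.

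Then look at a body $k\notin C$ at distance bounded below from $p$, or alternatively at the whole cluster's centre of mass. Summing \eqref{eq:cc} over $i\in C$ cancels the internal forces. This gives $\lambda^n m_C c^n_C=$ the external force on $C$, which is $O(1)$. Hence $|c^n_C|=O(\varepsilon_n^3)\to0$, so $p=0$. Applying the same summation to the complementary clusters shows that every cluster limit point is $0$. That contradicts $I(q^\infty)=1$ unless some cluster has force balance with $\lambda^n$ bounded, and that case contradicts $\lambda^n\to\infty$. Hence there are no collisions.

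The delicate point is the uniform comparability in the multiscale case, which I would handle by induction on $|C|$, choosing the smallest scale present.
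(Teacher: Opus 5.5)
Your last paragraph is the paper's proof. You sum \eqref{eq:cc} over each maximal cluster, singletons included. The internal forces cancel, the forces from the other clusters stay bounded, and $\lambda^n\to\infty$ then forces every cluster's centre of mass, hence every limit position, to $0$. That contradicts $I(q^\infty)=1$. Reduced to that argument, together with your compactness and limit-passing steps, the proposal is correct.

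The material in between is unnecessary, and parts of it are wrong. The sentence ``$\lambda^n$ is bounded'' is false: in the collision case $\lambda^n\to\infty$, as you derive yourself a few lines later. More importantly, the ``delicate point'' you flag does not exist. The summation step needs only $\lambda^n\to\infty$, with no rate. This is immediate, because $\lambda^n=U(q^n)/I(q^n)=U(q^n)\ge m^n_im^n_j/r^n_{ij}$ for a colliding pair. So you need neither the cluster virial identity nor any comparability of scales, and $|c^n_C|=O(\varepsilon_n^3)$ can be replaced by $|c^n_C|\le O(1)/\lambda^n\to0$.

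The multiscale fix you propose would actually break the step you rely on. The bound ``the external force on $C$ is $O(1)$'' holds only when $C$ is the full set of indices whose limits equal $p$. If you pass to a finest-scale sub-cluster, the remaining bodies of the same macroscopic cluster exert an unbounded force on it. Similarly, $U(q^n)=U_C+O(1)$ fails when two disjoint pairs collide at different points, for example $q_1,q_2\to p$ and $q_3,q_4\to p'$, and this case must be considered in the proof by contradiction. So take $C$ to be a maximal cluster, use $\lambda^n\to\infty$ directly, and drop the multiscale discussion.
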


For fixed masses this is Shub's lemma \cite{Shu71}, which says that the CCs stay away from the collisions. We follow the proof in \cite[Proposition~15]{Moe15}, and include it because here the masses vary.

\begin{proof}
Write $U_n$ for the potential with the masses $m^n$, and $\lambda_n=U_n(q^n)$ for the multiplier of $q^n$. Since $m^n_i|q^n_i|^2\le I(q^n)=1$ and $m^n_i\to m^\infty_i>0$, the sequence is bounded, and after passing to a subsequence $q^n\to q^\infty$. By continuity, $q^\infty$ has centre of mass $0$ and $I(q^\infty)=1$ for the masses $m^\infty$. In particular the $q^\infty_i$ are not all equal. Suppose that $q^\infty$ has a collision. Group the indices into clusters $\Gamma$, consisting of indices with a common limit position $p_\Gamma$. Some $r^n_{ij}$ tends to $0$, so $\lambda_n\to\infty$. Sum \eqref{eq:cc} over $i\in\Gamma$. The mutual forces between the bodies of $\Gamma$ cancel in pairs, and the forces exerted by the other bodies stay bounded, because their distances to the bodies of $\Gamma$ have positive limits. Hence $\lambda_n\sum_{i\in\Gamma}m^n_iq^n_i$ is bounded, and $\sum_{i\in\Gamma}m^n_iq^n_i\to0$. The limit of this sum is $(\sum_{i\in\Gamma}m^\infty_i)\,p_\Gamma$, so $p_\Gamma=0$ for every cluster $\Gamma$. Different clusters have different limit positions, so there is only one cluster, and $q^\infty_i=0$ for all $i$. This is a contradiction, so $q^\infty$ is collision-free. Then $\lambda_n\to U_\infty(q^\infty)$, where $U_\infty$ is the potential with the masses $m^\infty$, and letting $n\to\infty$ in \eqref{eq:cc} shows that $q^\infty$ is a CC for $m^\infty$.
\end{proof}

The next lemma is a second application of Theorem~B.

\begin{lemma}\label{lem:collinear}
Let $q^n$ be convex CCs for masses $m^n\in(0,\infty)^4$. Suppose that $m^n\to m\in(0,\infty)^4$ and that $q^n$ converges to a CC $q$ for the masses $m$. Then $q$ is not collinear.
\end{lemma}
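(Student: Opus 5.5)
The plan is to pass the inequality of Theorem~B to the limit and compare it with the known Morse index of collinear central configurations. For each $n$ let $Q_n$ and $K_n$ be the quadratic forms of Lemma~\ref{lem:hessian} for the convex CC $q^n$ and the masses $m^n$. Let $Q$ and $K$ be the corresponding forms for the limit $q$ and the masses $m$. Theorem~B gives $Q_n(v)\ge\frac14K_n(v)$ for all $v\in(\R^2)^4$ and all $n$.

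\emph{Step 1: continuity of the forms.} By Lemma~\ref{lem:hessian}(a), the coefficients of $Q_n$ and $K_n$ are built from three ingredients:
\begin{itemize}[leftmargin=2em]
\item the products $m^n_im^n_j$;
\item the quantities $s_e=r_e^{-3}$ and the unit vectors $q_e/r_e$;
\item the constant $\lambda'_n=U_n(q^n)/(M_nI_n(q^n))$.
\end{itemize}
The limit $q$ is a CC, so it is collision-free and noncollinear-or-collinear but never collapsed. Hence all $r_e$ stay bounded away from $0$ along the sequence, and $U$, $I$ and $M$ depend continuously on $(q,m)$ there. Therefore $\lambda'_n\to\lambda'$, the $\omega_e$ and $\kappa_e$ converge, and $Q_n(v)\to Q(v)$ and $K_n(v)\to K(v)$ for every fixed $v$. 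Passing to the limit gives $Q\ge\frac14K\ge0$ on $(\R^2)^4$.

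\emph{Step 2: contradiction if $q$ is collinear.} Suppose $q$ is collinear. By Lemma~\ref{lem:shape}, the Morse index of $q$ equals the index of $Q$, and $Q\ge0$ forces this index to be $0$. But a collinear CC of $n=4$ bodies has Morse index $n-2=2$ in this convention; this is the classical fact recalled after Lemma~\ref{lem:hessian} (Moeckel, Conley). Concretely, if the bodies lie on the real axis and $v=Ju$ with $u\in\R^4$, then $\dr_e(v)=0$ for every edge. So $K(v)=0$, and
\[
Q(v)=-\sum_{i<j}m_im_j\bigl(s_{ij}-\lambda'\bigr)(u_i-u_j)^2 .
\]
The classical result says that this form is negative definite on the two-dimensional complement of $\operatorname{span}\{\mathbb 1,q\}$. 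Hence there is a vector $v$ with $Q(v)<0$, contradicting $Q\ge0$.

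\emph{Expected obstacle.} The only delicate point is the justification of the negative directions of $Q$ at a collinear CC. If one prefers not to quote the index $n-2$, I would argue directly on the weighted Laplacian $L_\omega$, with weights $\omega_{ij}=m_im_j(s_{ij}-\lambda')$ restricted to perpendicular variations. The equilibrium equations \eqref{eq:equil} show that $\mathbb 1$ and $q$ lie in $\ker L_\omega$. It then remains to exhibit a direction $u$ with $\sum\omega_{ij}(u_i-u_j)^2>0$, using that the adjacent pairs are short, so that $s_{ij}>\lambda'$ for them. Everything else is routine continuity.
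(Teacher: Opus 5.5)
Your argument is correct. Step~1 is the same continuity argument the paper uses, but Step~2 takes a different route.

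\textbf{Your route.} You get a negative direction of $Q$ at a collinear limit by quoting the classical theorem that collinear CCs have Morse index $n-2$ (Conley, Pacella, \cite[Proposition~19]{Moe15}). The paper itself invokes this theorem in Corollary~C(iii),(iv), so the citation is legitimate.

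\textbf{The paper's route.} The paper exhibits one explicit vector instead. Let $k$ be an extreme body on the line with direction $u$, and let $v$ move only $q_k$, by $Ju$. Then $K(v)=0$ and
\[
Q(v)=-m_k\sum_{j\ne k}m_j(s_{kj}-\lambda').
\]
Write $r_j=r_{kj}$. The $u$-component of \eqref{eq:equil} at $k$ gives $\lambda'=\sum m_jr_j^{-2}/\sum m_jr_j$. Hence
\[
\Bigl(\sum m_jr_j\Bigr)\sum m_j(s_{kj}-\lambda')=\frac12\sum_{i,j\ne k}m_im_j(r_j-r_i)(r_i^{-3}-r_j^{-3})>0,
\]
so $Q(v)<0$.

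\textbf{Comparison.} Your version is shorter given the citation, and it yields more: a two-dimensional negative space rather than a single direction. The paper's version is self-contained and uses only that $r^{-3}$ is decreasing. That is why it transfers to the potentials $r^{-a}$ listed among the open questions. It also keeps Theorem~A independent of the results of \cite{Moe15} on collinear CCs, which are not in the Lean formalization.

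\textbf{Your fallback.} The fallback in your last paragraph is not yet an argument. Positive weights $\omega_{ij}$ on adjacent pairs do not by themselves make $\sum\omega_{ij}(u_i-u_j)^2$ positive for any particular $u$, because the long edges carry negative weights. To replace the citation you must choose a specific $u$ and prove an inequality. The indicator of an extreme body, with the computation above, is the natural choice.
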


\begin{proof}
Suppose that $q$ lies on a line with unit direction $u$. Let $k$ be the body with the largest coordinate $\ip{q_j}{u}$, write $r_j=r_{kj}$, and let all sums run over $j\ne k$. Then $q_k-q_j=r_ju$ for $j\ne k$, and the three distances $r_j$ are distinct, because $q$ is collision-free. Let $v\in(\R^2)^4$ move $q_k$ by $Ju$ and leave the other bodies fixed. Then $\dr_e(v)=0$ for every edge $e$, so $K(v)=0$, and Lemma~\ref{lem:hessian}(a) gives
\[
Q(v)=-\sum\omega_{kj}=-m_k\sum m_j\,(s_{kj}-\lambda').
\]
The component of \eqref{eq:equil} for $i=k$ along $u$ is $\sum m_j(s_{kj}-\lambda')r_j=0$. So $\lambda'=\sum m_jr_j^{-2}/\sum m_jr_j$, and
\begin{align*}
\Bigl(\sum m_jr_j\Bigr)\sum m_j(s_{kj}-\lambda')&=\Bigl(\sum m_jr_j^{-3}\Bigr)\Bigl(\sum m_jr_j\Bigr)-\Bigl(\sum m_jr_j^{-2}\Bigr)\Bigl(\sum m_j\Bigr)\\
&=\frac12\sum_{i,j\ne k}m_im_j\,(r_j-r_i)(r_i^{-3}-r_j^{-3}).
\end{align*}
The terms of the last sum are nonnegative, and they are positive when $r_i\ne r_j$. So $Q(v)<0$. On the other hand, let $Q_n$ and $K_n$ be the forms $Q$ and $K$ of the CC $q^n$ for the masses $m^n$. By Theorem~B, $Q_n(v)\ge\frac14K_n(v)\ge0$. At collision-free configurations the multiplier $\lambda=U/I$ depends continuously on the configuration and the masses, and hence so do $\kappa_e$, $\omega_e$ and $\dr_e(v)$ in Lemma~\ref{lem:hessian}(a). So $Q_n(v)\to Q(v)$, and $Q(v)\ge0$, a contradiction.
\end{proof}

\begin{lemma}\label{lem:proper}
The projection $\mathrm{pr}:\cZ\to\mathcal M$ is proper.
\end{lemma}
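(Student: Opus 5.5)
To show that $\mathrm{pr}$ is proper, I will take a compact set $\mathcal K\subset\mathcal M$ and a sequence $(s_n,m^n)\in\cZ$ with $m^n\in\mathcal K$, and produce a subsequence converging in $\cZ$. This suffices because $\cZ$ is a manifold by Lemma~\ref{lem:local}, so sequential compactness of $\mathrm{pr}^{-1}(\mathcal K)$ gives compactness. After passing to a subsequence, $m^n\to m^\infty\in\mathcal K\subset(0,\infty)^4$.

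For each $n$, let $q^n$ be the representative of $s_n$ with centre of mass $0$ and $I(q^n)=1$, both computed with the masses $m^n$. By Section~\ref{sec:shape}, $q^n$ is a CC for $m^n$, and it is a counterclockwise convex quadrilateral with cyclic order $(1234)$.

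\emph{Step 1 (no collisions).} Lemma~\ref{lem:shub} gives a further subsequence with $q^n\to q^\infty$, where $q^\infty$ is a collision-free CC for $m^\infty$.

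\emph{Step 2 (no collinear limit).} Lemma~\ref{lem:collinear} shows that $q^\infty$ is not collinear. By Lemma~\ref{lem:nocollinear}, no three of its bodies lie on a line, so all four $A_l(q^\infty)$ are nonzero by Lemma~\ref{lem:areas}(d).

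\emph{Step 3 (the limit lies in $\cS^+$).} The areas $A_l$ are continuous, and $A_l(q^n)$ has the sign pattern $(+,-,+,-)$ for every $n$. Hence $A_l(q^\infty)$ has the same signs weakly, and therefore strictly, since these values are nonzero. So $s_\infty=[q^\infty]\in\cS^+$.

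\emph{Step 4 (convergence in $\cZ$).} The class $s_\infty$ is a critical point of $f_{m^\infty}$, so $(s_\infty,m^\infty)\in\cZ$. Since the quotient map is continuous, $s_n\to s_\infty$ in $\cS$, which gives convergence in $\cZ$ with its subspace topology.

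The substantive work is already contained in Lemmas~\ref{lem:shub} and~\ref{lem:collinear}; the only step that needs care is Step~3. There one must check that nondegeneracy of the limit triangles, obtained from Lemma~\ref{lem:nocollinear}, upgrades the weak sign inequalities to strict ones, so that the limit stays in the open set $\cS^+$ rather than on its boundary.
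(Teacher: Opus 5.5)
Your proof is correct and follows the paper's argument essentially step by step. You pass to a subsequence of masses, apply Lemma~\ref{lem:shub}, exclude a collinear limit with Lemma~\ref{lem:collinear}, get nonvanishing areas from Lemmas~\ref{lem:nocollinear} and~\ref{lem:areas}(d), and use persistence of the signs $(+,-,+,-)$ to land in $\cS^+$; the only cosmetic difference is that you get compactness from sequential compactness via the manifold structure of $\cZ$, where the paper cites its metrizability.
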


\begin{proof}
Let $C\subset\mathcal M$ be compact, and let $(s_n,m_n)$ be a sequence in $\mathrm{pr}^{-1}(C)$. After passing to a subsequence, $m_n\to m^\infty\in C$. Represent $s_n$ by a CC $q^n$ for $m_n$ with centre of mass $0$ and $I(q^n)=1$. By Lemma~\ref{lem:shub}, after passing to a further subsequence, $q^n$ converges to a CC $q^\infty$ for $m^\infty$, and $s_n\to s_\infty=[q^\infty]\in\cS\setminus\Delta$. Since $s_n\in\cS^+$, the $q^n$ are convex CCs, so $q^\infty$ is not collinear by Lemma~\ref{lem:collinear}. By Lemma~\ref{lem:nocollinear} no three of the $q^\infty_l$ are collinear, and by Lemma~\ref{lem:areas}(d) all $A_l(q^\infty)$ are nonzero. As limits of the $A_l(q^n)$ they have the signs $(+,-,+,-)$. Hence $s_\infty\in\cS^+$, and $(s_\infty,m^\infty)\in\mathrm{pr}^{-1}(C)$. Thus every sequence in $\mathrm{pr}^{-1}(C)$ has a subsequence that converges in $\mathrm{pr}^{-1}(C)$. Since $\cZ$ is metrizable, $\mathrm{pr}^{-1}(C)$ is compact.
\end{proof}

\begin{lemma}\label{lem:count}
Let $p:X\to Y$ be a proper local homeomorphism between metrizable spaces, with $Y$ connected. Then all fibres of $p$ are finite and have the same number of points.
\end{lemma}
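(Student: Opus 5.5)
The plan is to show that the cardinality function $n(y)=\#p^{-1}(y)$ is finite everywhere and locally constant on $Y$. Connectedness of $Y$ then forces $n$ to be constant.

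\emph{Finiteness.} Fix $y\in Y$. The fibre $p^{-1}(y)=p^{-1}(\{y\})$ is compact, because $p$ is proper and a point is compact. It is also discrete: each $x\in p^{-1}(y)$ has an open neighbourhood $V_x$ on which $p$ is injective, so $V_x\cap p^{-1}(y)=\{x\}$. A compact discrete space is finite, so $n(y)<\infty$.

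\emph{Local constancy.} Write $p^{-1}(y)=\{x_1,\dots,x_k\}$; the case $k=0$ is included. Since $X$ is metrizable, hence Hausdorff, I will choose pairwise disjoint open sets $V_1,\dots,V_k$ with $x_i\in V_i$ such that each $p|_{V_i}$ is a homeomorphism onto an open set $p(V_i)\ni y$. This uses that $p$ is a local homeomorphism, together with shrinking the neighbourhoods to make them disjoint.

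\emph{Lower bound near $y$.} On $W_0=\bigcap_i p(V_i)$, every point has at least one preimage in each $V_i$, and these preimages are distinct because the $V_i$ are disjoint. So $n\ge k$ on $W_0$.

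\emph{Upper bound near $y$.} This is the step where properness is essential. The set $F=X\setminus\bigcup_iV_i$ is closed, and I claim that $p(F)$ is closed in $Y$. To see this, take $y_j=p(f_j)\to y'$ with $f_j\in F$. The set $\{y_j\}\cup\{y'\}$ is compact, so its preimage is compact, and it is metrizable. Hence a subsequence $f_j\to f\in F$, and $p(f)=y'$. Moreover $y\notin p(F)$, since all preimages of $y$ lie in $\bigcup_iV_i$.

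Therefore $W=W_0\setminus p(F)$ is an open neighbourhood of $y$. For $y'\in W$, every preimage of $y'$ lies in some $V_i$, and each $V_i$ contains exactly one preimage of $y'$ by the injectivity of $p|_{V_i}$. So $n(y')=k$ on $W$.

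\emph{Conclusion.} The function $n$ is locally constant, so each set $\{y:n(y)=k\}$ is open. These sets partition $Y$, and since $Y$ is connected exactly one of them is nonempty. Hence all fibres have the same finite number of points.

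The only delicate point is the closedness of $p(F)$, that is, that a proper map into a metrizable space is closed. The sequential argument above handles it directly, so I do not expect a real obstacle.
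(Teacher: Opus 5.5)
Your proof is correct and follows the paper's argument essentially step for step. Finiteness comes from the fibres being compact and discrete. You then choose disjoint homeomorphic neighbourhoods $V_i$ of the $x_i$, prove that $p(F)$ is closed by sequential compactness of the preimage of a convergent sequence together with its limit, and show that the fibre count is constant on $\bigcap_i p(V_i)\setminus p(F)$. The only cosmetic difference is that you split the count on this neighbourhood into a lower and an upper bound.
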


\begin{proof}
A fibre $p^{-1}(y)$ is compact because $p$ is proper, and discrete because $p$ is a local homeomorphism. So it is finite, say $p^{-1}(y)=\{x_1,\dots,x_k\}$. Choose pairwise disjoint open sets $U_i\ni x_i$ such that $p$ maps $U_i$ homeomorphically onto an open set $V_i\ni y$, and put $F=X\setminus\bigcup_iU_i$. The set $p(F)$ is closed. Indeed, if $p(x_n)\to z$ with $x_n\in F$, then the $x_n$ lie in the compact set $p^{-1}(\{z\}\cup\{p(x_n):n\ge1\})$, so a subsequence converges to a point $x\in F$ with $p(x)=z$. Also $y\notin p(F)$. So $W=\bigcap_iV_i\setminus p(F)$ is an open neighbourhood of $y$ (if $k=0$, then $W=Y\setminus p(F)$). For $z\in W$ the fibre $p^{-1}(z)$ lies in $\bigcup_iU_i$ and meets each $U_i$ in exactly one point, so it has $k$ points. Hence the number of points in the fibres is locally constant, and therefore constant.
\end{proof}

\begin{proof}[Proof of Theorem~A]
By Lemmas~\ref{lem:local}, \ref{lem:proper} and~\ref{lem:count}, the number $N(m)$ of points of $\mathrm{pr}^{-1}(m)$ is finite and independent of $m\in\mathcal M$. For four equal masses, the only convex CC up to similarity is the square. Indeed, let $q$ be such a CC with cyclic order $(1234)$. By \cite[Theorem~1]{AFS08}, applied to both diagonals, $q$ is symmetric with respect to the lines $q_1q_3$ and $q_2q_4$, so that $A_1=A_3$ and $A_2=A_4$. By Lemma~\ref{lem:areas}(a), the four numbers $|A_l|$ are then equal. By \eqref{eq:dziobek}, $s_{ij}=\lambda'+\sigma A_iA_j/(m_im_j)$ takes one value on the four sides and another on the two diagonals, so $q$ is a rhombus with equal diagonals, that is, a square. A counterclockwise square with vertices $q_1,q_2,q_3,q_4$ in this order and centre $o$ satisfies $q_k-o=J^{k-1}(q_1-o)$. So it equals $(q_1-o)(1,J,-1,-J)+o\mathbb 1$, and its class is the class of $(1,J,-1,-J)$, which lies in $\cS^+$. Hence $N(\frac14,\frac14,\frac14,\frac14)=1$, and so $N\equiv1$. By Lemma~\ref{lem:local}, $\mathrm{pr}$ is then a bijective local real-analytic diffeomorphism, hence a real-analytic diffeomorphism. Its inverse has the form $m\mapsto(s(m),m)$, with $s:\mathcal M\to\cS^+$ real-analytic.

Now let $m\in(0,\infty)^4$, put $\hat m=m/\sum_im_i$, and let $q$ be a convex CC for $m$ with cyclic order $(1234)$. It is also a CC for $\hat m$. If $q$ is counterclockwise, then $[q]$ corresponds to a point of $\mathrm{pr}^{-1}(\hat m)$, so $[q]=s(\hat m)$. If $q$ is clockwise, then $\bar q$ is a counterclockwise convex CC with the same cyclic order, and $[q]=\iota(s(\hat m))$. The classes $s(\hat m)$ and $\iota(s(\hat m))$ are distinct, because the signs of the $A_l$ differ, and they are mirror images of each other. So there are exactly two convex CCs with cyclic order $(1234)$ up to orientation-preserving similarity, and exactly one up to similarity. Both are nondegenerate local minima of $f_m$ by Theorem~B, and both depend real-analytically on $m$. For the cyclic orders $(1243)$ and $(1324)$, exchange the labels $3$ and $4$, respectively $2$ and $3$, together with the corresponding masses.
\end{proof}

\begin{remark}\label{rem:existence}
The existence part of this argument, $N\ge1$, uses only the elementary fact that the square is a CC for four equal masses. Theorem~B and the covering argument therefore also give a proof of the theorem of MacMillan and Bartky \cite{MB32} that convex CCs with every cyclic order exist for all masses; compare \cite{Xia04}. The theorem of Albouy, Fu and Sun is needed only for $N\le1$.
\end{remark}

\subsection{A formal proof}\label{sec:lean}
Theorems~A and~B have been formalized in the proof assistant Lean~4 with the Mathlib library; see the statement on data and code availability. The formalization constructs the shape space as in Section~\ref{sec:shape}, as the quotient of the configurations that are not totally collapsed by the orientation-preserving similarities, with the affine charts $q_i=0$, $q_j=1$. It proves that $\cS$ is a compact Hausdorff real-analytic manifold of dimension~$4$, that $f_m$ is real-analytic on $\cS\setminus\Delta$, and that the critical points of $f_m$ are the classes of the CCs. The identification with $\C P^2$ is not formalized, and it is not used. The Hessian of $f_m$ at a critical point is read in a chart, and with these definitions Theorems~A and~B and the sentence that follows Theorem~A are formalized as stated. For the analytic dependence on the masses the formal statement is Corollary~C(i): for each cyclic order and orientation there is a map from $(0,\infty)^4$ to $\cS$, real-analytic for this manifold structure, that sends $m$ to the class of the convex CC with that cyclic order and orientation. The formal proof obtains this map from a real-analytic representative in $(\R^2)^4$, which it constructs in the slice $q_1=0$, $q_2=1$ from Theorem~B and the implicit function theorem for real-analytic maps, which is part of Mathlib. The formal proof of uniqueness needs only that $\mathrm{pr}$ is a local homeomorphism, which it derives from Theorem~B and the implicit function theorem.

The formalization also contains most of the other results of Sections~\ref{sec:prelim}, \ref{sec:identity}, \ref{sec:normal} and~\ref{sec:cons}: Lemmas~\ref{lem:hessian}, \ref{lem:shape} and~\ref{lem:areas}; Lemma~\ref{lem:rigid}, except the statements on the rank and the image of $\dr$; Lemmas~\ref{lem:nocollinear} and~\ref{lem:dziobek}; Propositions~\ref{prop:convexsigns} and~\ref{prop:identity}; Corollary~\ref{cor:palmore}; Lemmas~\ref{lem:dziobekfn} and~\ref{lem:normal}; Corollary~C(ii) and the first two sentences of~(iii); and Corollary~D. In Lean the form $Q$ is defined by the formula of Lemma~\ref{lem:hessian}(a), and part~(a) is the statement that this form is $D^2\Phi(q)$, which holds at every collision-free $q$. In these statements, the hypothesis that no three bodies are collinear takes the form that no $A_l$ vanishes, and a CC is called noncollinear if some $A_l$ is nonzero, which is equivalent by Lemma~\ref{lem:areas}(c). The formal statement of Lemma~\ref{lem:areas}(d) expresses that $q_1,q_2,q_3,q_4$ are the vertices of a strictly convex quadrilateral in this cyclic order by the condition that the open segments $q_1q_3$ and $q_2q_4$ meet and are not parallel. The other formal statements define convexity and the cyclic order by the signs of the $A_l$, and Lemma~\ref{lem:areas}(d) shows that this agrees with the geometric definition. Not formalized are Corollary~C(iv) and the last sentence of~(iii), which use results of \cite{Moe15} on collinear CCs and on the topology of $\cS\setminus\Delta$, Proposition~\ref{prop:corner}, and Remarks~\ref{rem:identity}(ii) and~\ref{rem:involution}.

The development contains no unproved statements, and it uses no axioms besides the three standard axioms of Lean (propositional extensionality, the axiom of choice and the soundness of quotients); Lean's command \texttt{\#print axioms} lists exactly these three for each of the formal statements. In particular, the computation is checked by the kernel of Lean, the small program that checks every proof, and not by compiled code, whose use Lean would record as an additional axiom. The computation is a branch and bound written in Lean, a third implementation of Proposition~\ref{prop:cap}, which uses interval arithmetic with directed rounding on the fixed-point numbers $n2^{-96}$ with integer $n$, slope enclosures (a variant of the mean-value enclosures of Section~\ref{sec:balls}), and localization by interval Newton steps and bisection, starting from the initial grids of Table~\ref{tab:runs}. The search was run beforehand by an unverified program, which records its decisions (where to bisect, how to localize, and which $y$ or $Y$ to use) in a hint for each initial box. The formal proof replays the hints and checks every step, so a wrong hint can only make the check fail. The replay is split into $3418$ Boolean statements, each about a block of consecutive initial boxes, and each is proved by the tactic \texttt{decide +kernel}, which evaluates it in the kernel. All the facts about the computation that the argument uses are proved in Lean:
\begin{itemize}[leftmargin=2em]
\item the soundness of the arithmetic, of the enclosures and of the replay;
\item the fact that the initial boxes cover the regions of Lemmas~\ref{lem:bounds} and~\ref{lem:chartbounds}, which are also proved in Lean;
\item the fact that the certified functions are the traces $\tr S(y)$ and $\tr\widehat S(Y)$ of Section~\ref{sec:what}.
\end{itemize}
In these evaluations the kernel uses its built-in arithmetic of natural numbers, which is implemented with the GMP library and is part of the standard kernel. The evaluations take about $7.6$ hours of processor time, or about $19$ minutes on a machine with $32$ hardware threads, and the rest of the development is checked in about two minutes. As an independent check, the formal proof has also been checked by nanoda, a second type checker for Lean, written in Rust and with its own arithmetic of natural numbers. It checked each of the $211$ files of Boolean statements with only the three standard axioms, and the rest of the proof with the $3418$ statements as axioms; a script checks that the two parts fit together, by comparing hashes of the statements and of the definitions that they share.

The formal proof follows Sections~\ref{sec:prelim}--\ref{sec:proofs}, with three differences.
\begin{enumerate}[label=\textup{(\roman*)},leftmargin=2.2em]
\item The covering argument works in the slice $q_1=0$, $q_2=1$, that is, in one chart of $\cS$, where the CCs are the zeros of the equations of the bodies $3$ and $4$.
\item Lemma~\ref{lem:count} is derived from the theorem in Mathlib that a proper local homeomorphism is a covering map.
\item It does not assume \cite[Theorem~1]{AFS08}. For four equal masses it proves $A_1=A_3$ and $A_2=A_4$ directly, following the proof in \cite{AFS08}.
\end{enumerate}
The analytic core of that proof is \cite[Lemma~2]{AFS08}, an inequality stated for every negative exponent $\alpha$. For the Newtonian exponent $\alpha=-\frac23$ it reduces to the polynomial inequality
\[
(U+V)(U^3+V^3-2U^3V^3)\le U^2+UV+V^2,\qquad 0\le U,V\le1 .
\]
The difference of the two sides equals $U^2(1-U^2)+V^2(1-V^2)+UV(1-U^2)(1-V^2)+U^3V^3(2U+2V-1)$. Every term is nonnegative if $2U+2V\ge1$. Otherwise $U,V<\frac12$, and the last term is at least $-U^3V^3\ge-\frac9{16}UV\ge-UV(1-U^2)(1-V^2)$.

\section{Consequences and open questions}\label{sec:cons}

\subsection{Proof of Corollary~C}
(i) By the proof of Theorem~A, the counterclockwise and the clockwise convex CC with cyclic order $(1234)$ are the classes $s(\hat m)$ and $\iota(s(\hat m))$, where $\hat m=m/\sum_im_i$. Both depend real-analytically on $m$. For the other cyclic orders, relabel the bodies.

(ii) For $z\in\cE$ let $\mathfrak m(z)\in\mathcal M$ be the masses of Lemma~\ref{lem:dziobekfn}(c), normalized so that their sum is $1$. Suppose that $\mathfrak m(z)=\mathfrak m(z')=m$. By Lemma~\ref{lem:normal}(a), $q(z)$ and $q(z')$ are counterclockwise convex CCs with cyclic order $(1234)$ for the masses $m$. So $([q(z)],m)$ and $([q(z')],m)$ lie in $\mathrm{pr}^{-1}(m)$, which consists of the single point $(s(m),m)$ by the proof of Theorem~A. Hence $[q(z)]=[q(z')]$, and $z=z'$ by Lemma~\ref{lem:normal}(c). Any other normalization of the masses, for instance $m_1=1$, gives an equivalent statement.

As a consistency check, let $z=(a,b,c,x)\in\cE$, and suppose that the masses of $q(z)$ satisfy $m_1=m_2$ and $m_3=m_4$. The involution of Remark~\ref{rem:involution} maps $z$ to a point of $\cE$ with the same normalized masses, so by (ii) it fixes $z$. Hence $a=1$ and $b=c$, so that $q_3=-b\,q_1$ and $q_4=-b\,q_2$. Since $|q_1|=|q_2|=1$, the reflection in the perpendicular bisector of $q_1q_2$ is linear. It exchanges $q_1$ and $q_2$, and hence also $q_3$ and $q_4$. So $q(z)$ is an isosceles trapezoid with $q_1q_2\parallel q_3q_4$, in accordance with Corollary~D(b).

(iii) A similarity preserves the cyclic order of a convex configuration. By Theorem~A there are therefore exactly three convex CCs up to similarity, one for each cyclic order. Each of them gives two classes up to orientation-preserving similarity, the configuration and its mirror image, which are distinct by Lemma~\ref{lem:areas}(b). This gives six. All of them are nondegenerate local minima by Theorem~B.

Now suppose that all concave CCs for the masses $m$ are nondegenerate. Collinear CCs are nondegenerate, of Morse index $2$ \cite[Proposition~19]{Moe15}, and convex CCs are nondegenerate by Theorem~B. So $f_m$ is a Morse function on $\cS\setminus\Delta$. It is positive and tends to $+\infty$ at $\Delta$, so its sublevel sets are compact. Its critical set is compact by Lemma~\ref{lem:shub}, applied with constant masses, and it consists of isolated points, so it is finite. The Morse inequalities \cite[Section~11]{Moe15}, \cite{Pac87} compare the numbers $\gamma_k$ of critical points of index $k$ with the Betti numbers of $\cS\setminus\Delta$, whose Poincaré polynomial is $1+5t+6t^2$ \cite[Proposition~24]{Moe15}. By Corollary~\ref{cor:palmore} all indices are at most $2$, so
\[
\gamma_0+\gamma_1t+\gamma_2t^2=1+5t+6t^2+(1+t)(r_0+r_1t)
\]
with $r_0,r_1\ge0$. Hence $\gamma_0=1+r_0$, $\gamma_1=5+r_0+r_1$, $\gamma_2=6+r_1$, and the total number of CCs is $12+2(r_0+r_1)$. The six convex CCs give $\gamma_0\ge6$, and the twelve collinear CCs \cite[Propositions~18 and~19]{Moe15} give $\gamma_2\ge12$. So $r_0\ge5$ and $r_1\ge6$, and there are at least $34$ CCs up to orientation-preserving similarity. This is the argument of \cite[Section~13]{Moe15}, where all CCs are assumed to be nondegenerate. By Theorem~B the assumption is needed only for the concave CCs.

(iv) By Lemmas~\ref{lem:nocollinear} and~\ref{lem:areas}(d), a planar CC is collinear, convex or concave. Collinear CCs are nondegenerate \cite[Proposition~19]{Moe15}, \cite{Pac87}, and convex CCs are nondegenerate by Theorem~B. \qed

\subsection{Proof of Corollary~D}
The hypotheses and the conclusions are invariant under reflections, so we may assume that $q$ is counterclockwise. By the proof of Theorem~A, two counterclockwise convex CCs with cyclic order $(1234)$ for the same masses differ by an orientation-preserving similarity $T(z)=\mu z+t$. If such a $T$ fixes two distinct points, then $T$ is the identity.

(a) Let $\mathfrak r$ be the reflection in the line $q_2q_4$. Then $\mathfrak rq$ is a clockwise convex CC for the masses $m$. Exchanging the labels $1$ and $3$, we see that $q'=(\mathfrak rq_3,q_2,\mathfrak rq_1,q_4)$ is a CC for the masses $(m_3,m_2,m_1,m_4)=m$. It traverses the vertices of $\mathfrak rq$ in the order $3,2,1,4$, the reverse of their order in $\mathfrak rq$. So $q'$ is a counterclockwise convex CC with cyclic order $(1234)$, and $q'_i=T(q_i)$ for an orientation-preserving similarity $T$. Since $T$ fixes $q_2$ and $q_4$, it is the identity, and $\mathfrak rq_3=q_1$. So $q$ is symmetric with respect to the diagonal $q_2q_4$.

(b) Let $\mathfrak r$ be the reflection in the perpendicular bisector of $q_1q_2$, so that $\mathfrak rq_1=q_2$ and $\mathfrak rq_2=q_1$. Exchanging the labels $1\leftrightarrow2$ and $3\leftrightarrow4$ in $\mathfrak rq$ gives the CC $q'=(\mathfrak rq_2,\mathfrak rq_1,\mathfrak rq_4,\mathfrak rq_3)=(q_1,q_2,\mathfrak rq_4,\mathfrak rq_3)$ for the masses $(m_2,m_1,m_4,m_3)=m$. It traverses the vertices of $\mathfrak rq$ in the order $2,1,4,3$, which is again the reverse order. As in (a), $q'_i=T(q_i)$, where now $T$ fixes $q_1$ and $q_2$. So $T$ is the identity and $\mathfrak rq_4=q_3$. Thus $\mathfrak r$ exchanges $q_3$ and $q_4$. Hence $q_3q_4$ is perpendicular to the axis of $\mathfrak r$, like $q_1q_2$, and $r_{14}=r_{23}$. So $q$ is an isosceles trapezoid with $q_1q_2\parallel q_3q_4$.

(c) By (a), $q$ is symmetric with respect to the line $q_2q_4$. The configuration $(q_2,q_3,q_4,q_1)$, with the masses $(m_2,m_3,m_4,m_1)$, is a convex CC with cyclic order $(1234)$ whose first and third masses are equal. By (a) it is symmetric with respect to the line $q_3q_1$. So each diagonal lies on the perpendicular bisector of the other. The diagonals therefore bisect each other and are perpendicular, and $q$ is a rhombus. \qed

\begin{remark}\label{rem:dias}
Dias \cite[Lemma~7.7]{Dia26} reads the equations of a CC as an equilibrium stress of the configuration, which is a positive multiple of our self-stress $\omega$. For strictly convex planar CCs that satisfy a sign condition of MacMillan--Bartky type \cite[Definition~7.8]{Dia26}, Dias shows that the associated stress matrix is positive semidefinite of rank $n-3$, so that these CCs are not vertically degenerate in the sense of \cite{Dia26} \cite[Corollary~7.12]{Dia26}. For $n=4$ the sign condition is Proposition~\ref{prop:convexsigns}(a); compare \cite[Corollary~7.10]{Dia26}. In this case Remark~\ref{rem:identity}(ii) gives the stress matrix explicitly: its entries are $\Omega_{ij}=-\omega_{ij}=|\sigma|A_iA_j$ for $i\ne j$ and $\Omega_{ii}=\sum_{j\ne i}\omega_{ij}=|\sigma|A_i^2$, so $\Omega=|\sigma|AA^{\mathsf T}$, for convex and concave CCs alike, and the stress energy is $|\sigma|\,|L|^2$. The two statements are complementary: vertical degeneracy concerns higher dimensions, whereas Theorem~B concerns the planar second variation. The rank-one structure alone does not control the sign of $Q=K-|\sigma||L|^2$; this requires the comparison with $K$ in Proposition~\ref{prop:majorant}.
\end{remark}

\subsection{The corner and the optimal constant}
Let $\theta^*$ be the largest constant $\theta$ such that $Q\ge\theta K$ at every convex CC. By Proposition~\ref{prop:majorant}(c), $\theta^*=1-\sup\lmax(G)$, where the supremum is taken over all convex CCs, and by Section~\ref{sec:indep}, $\theta^*\ge\frac{13}{32}$.

\begin{proposition}\label{prop:corner}
Let $\frac12<\gamma_*<2$ and $-\frac{\gamma_*}2<\alpha_*<\frac12$, put $\rho_*=(\gamma_*^2-\gamma_*+1)^{1/2}$, and
\begin{gather*}
h_1^*=\frac{(1-2\alpha_*)(2-\gamma_*)}{1+\gamma_*},\qquad h_2^*=\frac{(2\alpha_*+\gamma_*)(2\gamma_*-1)}{1+\gamma_*},\\
h_3^*=\frac{(2\alpha_*+\gamma_*)(2-\gamma_*)}{1+\gamma_*},\qquad h_4^*=\frac{(1-2\alpha_*)(2\gamma_*-1)}{1+\gamma_*}.
\end{gather*}
\begin{enumerate}[label=\textup{(\alph*)},leftmargin=2em]
\item Let $\xi(b,\alpha,\gamma)$ be the function of Lemma~\ref{lem:bzero} near $(0,\alpha_*,\gamma_*)$. For all sufficiently small $b>0$ the point $z_b=\bigl(1+b\alpha_*,\,b,\,b\gamma_*,\,\frac12+b\,\xi(b,\alpha_*,\gamma_*)\bigr)$ lies in $\cE$.
\item Normalize the masses of $q(z_b)$ by $\sigma=-1$. As $b\to0$,
\begin{gather*}
m_1m_3\to\frac1{8h_1^*},\qquad m_1m_4\to\frac1{8h_3^*},\qquad m_2m_3\to\frac{\gamma_*}{8h_4^*},\qquad m_2m_4\to\frac{\gamma_*}{8h_2^*},\\
b^3m_1m_2\to\frac{\gamma_*}{12h_1^*h_2^*\rho_*^3},\qquad \frac{m_3m_4}{b^3}\to\frac{3\rho_*^3}{16},
\end{gather*}
and hence
\[
\frac{m_1}{m_2}\to\frac{2\gamma_*-1}{\gamma_*(2-\gamma_*)},\qquad \frac{m_3}{m_4}\to\frac{\gamma_*+2\alpha_*}{1-2\alpha_*},\qquad \frac{m_3}{b^3m_1}\to\frac32\rho_*^3h_3^*,\qquad \frac{m_4}{b^3m_2}\to\frac{3\rho_*^3h_4^*}{2\gamma_*}.
\]
The map from $(\alpha_*,\gamma_*)$ to the limits of $m_1/m_2$ and $m_3/m_4$ is a bijection onto $(0,\infty)^2$.
\item The matrix $G$ of $q(z_b)$ converges as $b\to0$ to a matrix $G_0$ that depends only on $\gamma_*$ and has the eigenvalues $\frac14$ and
\[
\mu(\gamma_*)=\frac{(2\gamma_*-1)(2-\gamma_*)}{3\gamma_*}=\frac13-\frac{2(\gamma_*-1)^2}{3\gamma_*}.
\]
Hence $\lmax(G)\to\max\bigl(\frac14,\mu(\gamma_*)\bigr)\le\frac13$, with equality if and only if $\gamma_*=1$.
\item $\theta^*\le\frac23$. More precisely, if $\gamma_*=1$, the smallest generalized eigenvalue of $(Q,K)$ at $q(z_b)$ tends to $\frac23$. For $\alpha_*=0$ and $\gamma_*=1$ the configurations $q(z_b)=q(1,b,b,x_b)$, $x_b=\frac12+b\,\xi(b,0,1)$, are isosceles trapezoids, with $m_1=m_2$, $m_3=m_4$ and $m_3/m_1\sim\frac34b^3$.
\end{enumerate}
\end{proposition}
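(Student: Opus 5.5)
The plan is to work entirely in the blow-up chart of Section~\ref{sec:chart}. There every quantity is real-analytic across $b=0$ when $\gamma>0$, so all four parts reduce to evaluating chart expressions at $b=0$ and then passing to the limit.

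\emph{Part (a).} Put $\xi_0=\xi(0,\alpha_*,\gamma_*)=\frac{3(\alpha_*\gamma_*-\alpha_*-\gamma_*)}{2(1+\gamma_*)}$, from Lemma~\ref{lem:bzero}. Substituting into $h_1,\dots,h_4$ at $b=0$ (namely $2-\alpha+2\xi$, $2\gamma+\alpha+2\xi$, $\alpha-\gamma-2\xi$ and $-(1+\alpha+2\xi)$) should reproduce exactly $h_1^*,h_2^*,h_3^*,h_4^*$. The hypotheses $\frac12<\gamma_*<2$ and $-\frac{\gamma_*}{2}<\alpha_*<\frac12$ are precisely what make all four of them strictly positive. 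We also have $h_5,h_6\to1$. By continuity, for small $b>0$ all $h_i(z_b)>0$, while $a=1+b\alpha_*>0$, $c=b\gamma_*>0$ and $x\to\frac12\in(-1,1)$. Since $h_i=g_i/b$ or $g_i$, the point lies in $\cC$. Finally $\hat P=0$ together with $b>0$ gives $P=bP/b=0$, so $z_b\in\cE$.

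\emph{Part (b).} Use $m_im_j=-A_iA_j/w_{ij}$ from Lemma~\ref{lem:dziobekfn}(c), together with the scalings $A_{1,2}=b\hA_{1,2}$, $w_{12}=b^5\hw_{12}$, $w_e=b\hw_e$ for the four mixed edges, and $w_{34}=b^{-3}\hw_{34}$. At $b=0$ we have $s=\frac{\sqrt3}{2}$, $\hA_1\to s/2$, $\hA_2\to-\gamma_*s/2$, $A_3\to s/2$ and $A_4\to-s/2$. Every $\psi\to\frac32$, and $\hat\delta\to\hat\eta_{34}\to\rho_*^{-3}$. The formulas of Lemma~\ref{lem:dziobekfn}(a) then give the leading terms:
\begin{itemize}[leftmargin=2em]
\item $\hw_{13}\to-\frac32h_1^*$ and $\hw_{24}\to-\frac32h_2^*$;
\item $\hw_{14}\to\frac32h_3^*$ and $\hw_{23}\to\frac32h_4^*$;
\item $\hw_{12}\to\frac94h_1^*h_2^*\rho_*^3$ and $\hw_{34}\to\rho_*^{-3}$.
\end{itemize}
For example, $m_1m_3\to\frac{3/16}{(3/2)h_1^*}=\frac1{8h_1^*}$, and the other five limits follow the same way. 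The ratio limits come from quotients of these products, for instance $m_1/m_2=m_1m_3/(m_2m_3)$ and $m_3/(b^3m_1)=(m_3m_4/b^3)/(m_1m_4)$. For the bijection, note that the limit of $m_1/m_2$ depends on $\gamma_*$ only and increases from $0$ to $\infty$ on $(\frac12,2)$. For fixed $\gamma_*$, the limit of $m_3/m_4$ increases in $\alpha_*$ from $0$ to $\infty$ on $(-\frac{\gamma_*}2,\frac12)$.

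\emph{Part (c), the main obstacle.} The formula $G=\sum_eD_e\beta_e\beta_e^{\mathsf T}-c_0^{-1}b_0b_0^{\mathsf T}$ is unchanged if $\tau_e$ is replaced by $T_e=\tau_e/b$. Hence $G$ equals the corresponding expression built from $D_e$, $\beta_e$ and $T_e$ in their chart forms (Appendix). These forms are analytic at $b=0$, so $G\to G_0$, which is that expression evaluated at $(0,\alpha_*,\gamma_*,\xi_0)$. The remaining work is an explicit $2\times2$ computation of $G_0$ at the degenerate configuration $q_1=(1,0)$, $q_2=(\frac12,\frac{\sqrt3}2)$, $q_3=q_4=0$, keeping the rescaled direction data of edge $34$ through $\rho_*$ and $\gamma_*$.

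I expect this computation to be the hardest step. I would first show that all $\alpha_*$-dependence cancels: the $h_i^*$ enter $D_e$ and $\hw_e$ only in ratios that collapse. I would then compute $\tr G_0=\frac14+\mu(\gamma_*)$ and $\det G_0=\frac14\mu(\gamma_*)$, if necessary with computer algebra. The identity $\mu=\frac13-\frac{2(\gamma_*-1)^2}{3\gamma_*}$ is routine algebra, and it gives $\lmax\to\max(\frac14,\mu)\le\frac13$, with equality exactly at $\gamma_*=1$.

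\emph{Part (d).} By Proposition~\ref{prop:majorant}(c), the smallest generalized eigenvalue of $(Q,K)$ equals $1-\lmax(G)$. By (c) it tends to $\frac23$ when $\gamma_*=1$, and so $\theta^*\le\frac23$. For $\alpha_*=0$ and $\gamma_*=1$ we get $a=1$ and $c=b$, so $q_3=-bq_1$ and $q_4=-bq_2$ with $|q_1|=|q_2|=1$, which is an isosceles trapezoid. The involution of Remark~\ref{rem:involution} fixes $(1,b,b,x_b)$ and permutes the masses as $(m_2,m_1,m_4,m_3)$. Since the masses are unique up to a factor by Lemma~\ref{lem:dziobekfn}(c), this forces $m_1=m_2$ and $m_3=m_4$. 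Finally, (b) with $\rho_*=1$ and $h_3^*=\frac12$ gives $m_3/m_1\sim\frac34b^3$.
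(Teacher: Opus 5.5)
Your route is the paper's: work in the blow-up chart, evaluate at $b=0$, and pass to the limit. Parts (a), (b) and (d) are correct as you give them. In (d), your appeal to Remark~\ref{rem:involution} is equivalent to the paper's direct check that the reflection in the perpendicular bisector of $q_1q_2$, with labels $1\leftrightarrow2$ and $3\leftrightarrow4$ exchanged, maps $q(z_b)$ to itself; you should add that $(m_2,m_1,m_4,m_3)=t\,m$ forces $t^2=1$.

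The gap is in (c), which carries the substance of the proposition. You do not compute $G_0$. The structural step you announce, that the $h_i^*$ enter $D_e$ and $\hw_e$ only in ratios that collapse, is false and would fail if pursued as stated. A brute-force computer-algebra evaluation would still reach the result, but the proposal as written does not establish (c). At $b=0$ one has $\hw_{13}=-\frac32h_1^*$, $\hw_{14}=\frac32h_3^*$, and so on, and the coefficients are linear in the $h_i^*$:
$D_{12}=0$, $D_{13}=\frac83h_1^*$, $D_{14}=\frac83h_3^*$, $D_{23}=\frac{8h_4^*}{3\gamma_*}$, $D_{24}=\frac{8h_2^*}{3\gamma_*}$, $D_{34}=\frac{16}9$.
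Each of the middle four depends on $\alpha_*$, so nothing collapses edge by edge.

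The missing idea is the pairing of edges forced by the degenerate limit. At $b=0$ we have $q_3=q_4=0$ and $r_{13}=r_{14}=r_{23}=r_{24}=1$. Hence:
\begin{itemize}
\item $\beta_{14}=-\beta_{13}$ and $T_{14}=-T_{13}=-\frac3{16}$;
\item $\beta_{24}=-\beta_{23}$ and $T_{24}=-T_{23}=\frac{3\gamma_*}{16}$;
\item $T_{34}=-\frac{3\rho_*}{16}$, and $\beta_{13},\beta_{23},\beta_{34}$ depend only on $\gamma_*$ (for instance $d_{24}-d_{23}=2\gamma_*-1$ there).
\end{itemize}
So $D_{13}$ and $D_{14}$ enter $\sum_eD_e\beta_e\beta_e^{\mathsf T}$, $\hat c_0=\sum_eD_eT_e^2$ and $\hat b_0=\sum_eD_eT_e\beta_e$ only through
$D_{13}+D_{14}=\frac83(h_1^*+h_3^*)=\frac83(2-\gamma_*)$.
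Likewise $D_{23}$ and $D_{24}$ enter only through
$\frac8{3\gamma_*}(h_2^*+h_4^*)=\frac{8(2\gamma_*-1)}{3\gamma_*}$.
These linear relations, not ratios, eliminate $\alpha_*$.

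They give $\hat c_0=\frac{\rho_*^2}4>0$. You also need this positivity for $G=\sum_eD_e\beta_e\beta_e^{\mathsf T}-\hat c_0^{-1}\hat b_0\hat b_0^{\mathsf T}$ to be analytic at $b=0$. They also give $\hat b_0=\frac12(2-\gamma_*)\beta_{13}-\frac12(2\gamma_*-1)\beta_{23}-\frac{\rho_*}3\beta_{34}$, and hence an explicit $2\times2$ matrix $G_0$. Its trace $\frac14+\mu(\gamma_*)$ and determinant $\frac14\mu(\gamma_*)$ can then be verified directly.
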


So every pair of limiting ratios $m_1/m_2$, $m_3/m_4$ occurs at the corner, and there the two masses $m_3,m_4$ are of order $b^3$ relative to $m_1,m_2$.

\begin{proof}
We drop the asterisks. At $b=0$ and $\xi=\xi(0,\alpha,\gamma)=\frac{3(\alpha\gamma-\alpha-\gamma)}{2(1+\gamma)}$, the formulas of Section~\ref{sec:chart} give $h_i=h_i^*$ for $i\le4$ and $h_5=h_6=1$. These values are positive under our hypotheses, and they satisfy
\[
h_1+h_3=2-\gamma,\qquad h_2+h_4=2\gamma-1,\qquad h_1h_2=h_3h_4 .
\]
All quantities below are given by the formulas of Section~\ref{sec:chart} and \hyperref[app:chart]{the Appendix}. These are real-analytic near $(0,\alpha,\gamma,\xi(0,\alpha,\gamma))$, so their limits along $z_b$ as $b\to0$ are their values at $b=0$.

(a) By continuity, $h_1,\dots,h_6>0$ at $(b,\alpha,\gamma,\xi(b,\alpha,\gamma))$ for all small $b>0$, and $\hat P=0$ there by Lemma~\ref{lem:bzero}. Hence $g_i=b\,h_i>0$ for $i\le4$, $g_5,g_6>0$ and $P=\hat P/b=0$. Moreover $a=1+b\alpha>0$, $c=b\gamma>0$ and $|x|<1$ for small $b$. So $z_b\in\cE$.

(b) At $b=0$ we have $x=\frac12$, $s=\frac{\sqrt3}2$, $R_e=1$ for $e\ne34$, $\psi=\frac32$ and $a+c=1$. Hence
\[
\hat\eta_{13}=\tfrac32h_1,\qquad\hat\eta_{24}=\tfrac32h_2,\qquad\hat\eta_{14}=\tfrac32h_3,\qquad\hat\eta_{23}=\tfrac32h_4,\qquad\hat\eta_{34}=\hat\delta=\rho^{-3},
\]
and the formulas of \hyperref[app:chart]{the Appendix} give
\begin{gather*}
\hw_{12}=\tfrac94h_1h_2\rho^3,\quad\hw_{13}=-\tfrac32h_1,\quad\hw_{24}=-\tfrac32h_2,\quad\hw_{14}=\tfrac32h_3,\quad\hw_{23}=\tfrac32h_4,\quad\hw_{34}=\rho^{-3},\\
\hA_1=A_3=\tfrac{\sqrt3}4,\qquad\hA_2=-\tfrac{\sqrt3}4\gamma,\qquad A_4=-\tfrac{\sqrt3}4 .
\end{gather*}
By Lemma~\ref{lem:dziobekfn}(c) and the scalings of Section~\ref{sec:chart},
\begin{gather*}
m_1m_2=-b^{-3}\frac{\hA_1\hA_2}{\hw_{12}},\qquad m_3m_4=-b^3\frac{A_3A_4}{\hw_{34}},\qquad m_1m_3=-\frac{\hA_1A_3}{\hw_{13}},\\
m_1m_4=-\frac{\hA_1A_4}{\hw_{14}},\qquad m_2m_3=-\frac{\hA_2A_3}{\hw_{23}},\qquad m_2m_4=-\frac{\hA_2A_4}{\hw_{24}} .
\end{gather*}
Inserting the values at $b=0$ gives the six limits. The four ratios follow from
\[
\frac{m_1}{m_2}=\frac{m_1m_3}{m_2m_3},\qquad\frac{m_3}{m_4}=\frac{m_1m_3}{m_1m_4},\qquad\frac{m_3}{m_1}=\frac{m_2m_3}{m_1m_2},\qquad\frac{m_4}{m_2}=\frac{m_1m_4}{m_1m_2},
\]
together with $h_1h_2=h_3h_4$ and the explicit form of the $h_i^*$. Finally, $\gamma\mapsto(2\gamma-1)/(\gamma(2-\gamma))$ maps $(\frac12,2)$ onto $(0,\infty)$ and has the derivative $2(\gamma^2-\gamma+1)/(2\gamma-\gamma^2)^2>0$. For fixed $\gamma$, $\alpha\mapsto(\gamma+2\alpha)/(1-2\alpha)$ maps $(-\frac\gamma2,\frac12)$ onto $(0,\infty)$ and has the derivative $2(1+\gamma)/(1-2\alpha)^2>0$.

(c) We have $c_0=b^2\hat c_0$ and $b_0=b\,\hat b_0$, where $\hat c_0=\sum_eD_eT_e^2$ and $\hat b_0=\sum_eD_eT_e\beta_e$. Hence $G=\sum_eD_e\beta_e\beta_e^{\mathsf T}-\hat c_0^{-1}\hat b_0\hat b_0^{\mathsf T}$. At $b=0$, \hyperref[app:chart]{the Appendix} gives
\begin{gather*}
(D_{12},D_{13},D_{14},D_{23},D_{24},D_{34})=\Bigl(0,\,\tfrac83h_1,\,\tfrac83h_3,\,\tfrac{8h_4}{3\gamma},\,\tfrac{8h_2}{3\gamma},\,\tfrac{16}9\Bigr),\\
(T_{12},T_{13},T_{14},T_{23},T_{24},T_{34})=\Bigl(0,\,\tfrac3{16},\,-\tfrac3{16},\,-\tfrac{3\gamma}{16},\,\tfrac{3\gamma}{16},\,-\tfrac{3\rho}{16}\Bigr),
\end{gather*}
and, since $q_1=(1,0)$, $q_2=(\frac12,\frac{\sqrt3}2)$ and $q_3=q_4=0$ there,
\begin{gather*}
\beta_{12}=0,\qquad \beta_{13}=-\beta_{14}=\frac{\gamma-2}{4\gamma}\Bigl(\frac1{\sqrt3},1\Bigr),\\
\beta_{23}=-\beta_{24}=\Bigl(-\frac{2\gamma-1}{2\sqrt3},0\Bigr),\qquad \beta_{34}=\frac{\rho}{2\gamma}\Bigl(\frac{1-2\gamma}{\sqrt3},1\Bigr).
\end{gather*}
Using $h_1+h_3=2-\gamma$ and $h_2+h_4=2\gamma-1$ we obtain
\[
\hat c_0=\tfrac3{32}(h_1+h_3)+\tfrac{3\gamma}{32}(h_2+h_4)+\tfrac{\rho^2}{16}=\tfrac{\rho^2}4,\qquad \hat b_0=\tfrac12(2-\gamma)\beta_{13}-\tfrac12(2\gamma-1)\beta_{23}-\tfrac\rho3\beta_{34},
\]
and
\[
G_0=\tfrac83(2-\gamma)\,\beta_{13}\beta_{13}^{\mathsf T}+\tfrac{8(2\gamma-1)}{3\gamma}\,\beta_{23}\beta_{23}^{\mathsf T}+\tfrac{16}9\,\beta_{34}\beta_{34}^{\mathsf T}-\tfrac4{\rho^2}\,\hat b_0\hat b_0^{\mathsf T}.
\]
This depends only on $\gamma$. Explicitly,
\[
G_0=\frac1{48\rho^2}\begin{pmatrix}3(-8\gamma^3+21\gamma^2-12\gamma+4)&\sqrt3\,(8\gamma^3-33\gamma^2+42\gamma-16)\\[2pt] \sqrt3\,(8\gamma^3-33\gamma^2+42\gamma-16)&\gamma^{-1}(-8\gamma^4+61\gamma^3-120\gamma^2+112\gamma-32)\end{pmatrix},
\]
and $\tr G_0=\frac14+\mu(\gamma)$, $\det G_0=\frac14\mu(\gamma)$. For example, at $\gamma=1$ we have $\hat b_0=\frac7{24}(\frac1{\sqrt3},-1)$ and
\[
G_0=\begin{pmatrix}\frac5{16}&\frac{\sqrt3}{48}\\[2pt] \frac{\sqrt3}{48}&\frac{13}{48}\end{pmatrix},
\]
with the eigenvalues $\frac14$ and $\frac13$. These computations were also carried out in computer algebra, and compared with the chart formulas evaluated at $b=10^{-15}$. The eigenvalues depend continuously on the matrix, so $\lmax(G)\to\max(\frac14,\mu(\gamma))$. Finally $\mu(\gamma)\le\frac13$, with equality if and only if $\gamma=1$, and $\mu(1)=\frac13>\frac14$.

(d) By Proposition~\ref{prop:majorant}(c), the smallest generalized eigenvalue of $(Q,K)$ at $q(z_b)$ is $1-\lmax(G)$. If $\gamma=1$, it tends to $\frac23$ by (c), and hence $\theta^*\le\frac23$. For $\alpha=0$ and $\gamma=1$ we have $a=1$ and $c=b$, so $q_3=-b\,q_1$ and $q_4=-b\,q_2$ with $|q_1|=|q_2|=1$. As in the proof of Corollary~C(ii), the reflection in the perpendicular bisector of $q_1q_2$ exchanges $q_1\leftrightarrow q_2$ and $q_3\leftrightarrow q_4$, so $q(z_b)$ is an isosceles trapezoid. The reflected configuration with the labels $1\leftrightarrow2$ and $3\leftrightarrow4$ exchanged is again $q(z_b)$, and it is a CC for the masses $(m_2,m_1,m_4,m_3)$. By the uniqueness of the masses in Lemma~\ref{lem:dziobekfn}(c), $(m_2,m_1,m_4,m_3)=t\,m$ for some $t>0$. Then $t^2=1$, so $m_1=m_2$ and $m_3=m_4$. Finally $\rho=1$ and $h_3^*=\frac12$, so $m_3/(b^3m_1)\to\frac34$ by (b).
\end{proof}

\begin{conjecture}\label{conj:sharp}
$\lmax(G)<\frac13$ at every convex CC. Consequently $Q\ge\frac23K$ at every convex CC, and by Proposition~\ref{prop:corner}(d) the constant $\frac23$ is optimal.
\end{conjecture}

The following floating-point observations support the conjecture; they are not part of any proof. At the corner, we maximized $\lmax(G)$ over $\gamma$ at the CCs with chart coordinates $(b,\alpha,\gamma)$, for fixed $b$ and $\alpha$. For $b=3\cdot10^{-2}$, $10^{-2}$ and $10^{-3}$ the maximum is $\frac13-c\,b^2$, attained at $\gamma=1+O(b)$, where $c>0$ depends on $\alpha$ and appears to converge as $b\to0$: for $|\alpha|\le0.3$ the three values of $c$ differ by less than $5\%$, while for $\alpha=-0.45$ they are $0.106$, $0.097$ and $0.092$. At $b=10^{-3}$, $c$ is $0.250$ for $\alpha=0$, and $0.232$--$0.233$, $0.180$, $0.092$ and $0.055$--$0.056$ for $\alpha=\pm0.15$, $\pm0.3$, $\pm0.45$ and $\pm0.4999$. Away from the corner, the largest value of $\lmax(G)$ at $18\,990$ sampled CCs of the direct region was $0.33127$. In a sample of $17\,516$ CCs of the chart region, the values approach $\frac13$ only as $b\to0$; compare Figure~\ref{fig:lmax}. At fixed $b$, a local maximization of $\lmax(G)$ ended at the boundary of $\cC$, where one of the masses tends to zero, with the values $0.33320$ for $b=0.05$, $0.33285$ for $b=0.1$, $0.33164$ for $b=0.2$, $0.32847$ for $b=0.4$ and $0.32429$ for $b=0.8$. These are limits of values at convex CCs and lower estimates of the suprema at fixed $b$. They have the form $\frac13-c\,b^2$ with $c$ increasing from $0.014$ at $b=0.8$ to $0.052$ at $b=0.05$, close to the values $0.055$--$0.056$ found at the corner for $\alpha=\pm0.4999$. Since $\lmax(G_0)=\frac13$ along $\gamma_*=1$, a proof of the conjecture near the corner has to use the expansion of $\lmax(G)$ to second order in $b$.

\subsection{Open questions}
\begin{enumerate}[label=\textup{(\arabic*)},leftmargin=2em]
\item \emph{The optimal constant.} Prove Conjecture~\ref{conj:sharp}.
\item \emph{A proof without computer assistance.} By Proposition~\ref{prop:majorant}(c) and the argument of Section~\ref{sec:proofs}, uniqueness follows as soon as $1$ is not an eigenvalue of $G$ at any convex CC, while numerically $\lmax(G)<\frac13$. The margin is large, but we do not know a conceptual reason for it.
\item \emph{Dziobek configurations.} By Remark~\ref{rem:identity}(ii), $Q=K+\sigma|\sum_lA_lv_l|^2$ for Dziobek configurations of $d+2$ bodies in $\R^d$. Is there an analogue of Theorem~B for some class of Dziobek CCs?
\item \emph{Other homogeneous potentials.} For the potential $\sum_{i<j}m_im_jr_{ij}^{-a}$ with $a>0$, the same computation gives $\kappa_e=a(a+2)m_im_jr_e^{-a-2}$ and $w_e=a\,r_e^{-a-2}-\lambda'$. The proof of Lemma~\ref{lem:dziobek} again gives $\sigma<0$, and $D_e=|\sigma|/\kappa_e$ is again independent of the masses, so Propositions~\ref{prop:identity} and~\ref{prop:majorant} carry over. An analogue of Theorem~A would need a new computation, to prove an analogue of Theorem~B. Given that, Lemma~\ref{lem:collinear} and the base case with equal masses carry over: the computation in the proof of Lemma~\ref{lem:collinear} uses only that $r^{-a-2}$ is decreasing, and \cite[Theorem~1]{AFS08} holds for these potentials. We have not attempted this.
\item \emph{Concave CCs.} Proposition~\ref{prop:majorant}(c) applies to concave CCs as well: a concave CC is degenerate if and only if $1$ is an eigenvalue of its matrix $G$. Degenerate concave CCs exist \cite{Rob25,LX25,LPRX26}. It would be interesting to describe the set of masses for which they occur.
\end{enumerate}

\section*{Appendix. Formulas of the blow-up chart}\phantomsection\label{app:chart}
This appendix lists the formulas used in the chart $(b,\alpha,\gamma,\xi)$ of Section~\ref{sec:chart}. Recall that $a=1+b\alpha$, $c=b\gamma$, $x=\frac12+b\xi$ and $s=\sqrt{1-x^2}$, and let $h_i$, $d_e$, $\rho^2$, $\he_e$, $\hat\delta$ and $\hat P$ be as in Section~\ref{sec:chart}. For $e\ne34$ put $R_e=1+b\,d_e$, $r_e=\sqrt{R_e}$ and $s_e=1/(R_er_e)$. Put $\rho=\sqrt{\rho^2}$ and $R_{34}=b^2\rho^2$, so that $r_{34}=b\rho$ and $s_{34}=b^{-3}\rho^{-3}$ for $b>0$.

\subsection*{The quantities \texorpdfstring{$w_e$}{w\_e}}
Lemma~\ref{lem:dziobekfn}(a) gives $w_{12}=b^5\hw_{12}$, $w_e=b\,\hw_e$ for $e\in\{13,14,23,24\}$ and $w_{34}=b^{-3}\hw_{34}$, where
\begin{gather*}
\hw_{12}=\frac{\he_{13}\he_{24}}{\hat\delta},\qquad
\hw_{13}=-\frac{\he_{13}\,(\he_{34}+b^4\he_{13})}{\hat\delta},\qquad
\hw_{24}=-\frac{\he_{24}\,(\he_{34}+b^4\he_{24})}{\hat\delta},\\
\hw_{14}=\he_{14}+b^4\hw_{12},\qquad
\hw_{23}=\he_{23}+b^4\hw_{12},\qquad
\hw_{34}=\frac{(\he_{34}+b^4\he_{13})(\he_{34}+b^4\he_{24})}{\hat\delta}.
\end{gather*}

\subsection*{Areas}
We have $A_1=b\hA_1$ and $A_2=b\hA_2$, where
\[
\hA_1=\tfrac s2\,(a+c),\qquad \hA_2=-\tfrac s2\,\gamma\,(1+b),\qquad\text{and}\qquad A_3=\tfrac s2\,(a+c),\qquad A_4=-\tfrac s2\,a\,(1+b).
\]

\subsection*{The coefficients \texorpdfstring{$D_e$}{D\_e}}
Inserting these expressions into \eqref{eq:De}, and using $w_{34}/s_{34}=\rho^3\hw_{34}$, we get
\begin{gather*}
D_{12}=-\frac{b^3\hw_{12}}{3s_{12}\hA_1\hA_2},\qquad D_{13}=-\frac{\hw_{13}}{3s_{13}\hA_1A_3},\qquad D_{14}=-\frac{\hw_{14}}{3s_{14}\hA_1A_4},\\
D_{23}=-\frac{\hw_{23}}{3s_{23}\hA_2A_3},\qquad D_{24}=-\frac{\hw_{24}}{3s_{24}\hA_2A_4},\qquad D_{34}=-\frac{\rho^3\hw_{34}}{3A_3A_4}.
\end{gather*}

\subsection*{The self-stress}
The rescaled self-stress $T_e=\tau_e/b$, with $\tau_e=A_iA_jr_e$ as in Lemma~\ref{lem:rigid}, is
\begin{gather*}
T_{12}=b\,\hA_1\hA_2\,r_{12},\qquad T_{13}=\hA_1A_3\,r_{13},\qquad T_{14}=\hA_1A_4\,r_{14},\\
T_{23}=\hA_2A_3\,r_{23},\qquad T_{24}=\hA_2A_4\,r_{24},\qquad T_{34}=A_3A_4\,\rho.
\end{gather*}

\subsection*{The vectors \texorpdfstring{$\beta_e$}{beta\_e}}
The vectors $\beta_e$ of Lemma~\ref{lem:beta} are not rescaled. With $q_1=(1,0)$, $q_2=a\,(x,s)$, $q_3=(-b,0)$ and $q_4=-c\,(x,s)$, they are $\beta_e=-\sum_{l\notin e}C_{e,l}\,q_l$, where
\begin{align*}
C_{12,3}&=\frac{r_{12}\,(R_{14}+R_{24}-R_{12})}{8A_3}, & C_{12,4}&=\frac{r_{12}\,(R_{13}+R_{23}-R_{12})}{8A_4},\\
C_{13,2}&=\frac{r_{13}\,(d_{14}-d_{13}+b\rho^2)}{8\hA_2}, & C_{13,4}&=\frac{r_{13}\,(R_{12}+R_{23}-R_{13})}{8A_4},\\
C_{14,2}&=\frac{r_{14}\,(d_{13}-d_{14}+b\rho^2)}{8\hA_2}, & C_{14,3}&=\frac{r_{14}\,(R_{12}+R_{24}-R_{14})}{8A_3},\\
C_{23,1}&=\frac{r_{23}\,(d_{24}-d_{23}+b\rho^2)}{8\hA_1}, & C_{23,4}&=\frac{r_{23}\,(R_{12}+R_{13}-R_{23})}{8A_4},\\
C_{24,1}&=\frac{r_{24}\,(d_{23}-d_{24}+b\rho^2)}{8\hA_1}, & C_{24,3}&=\frac{r_{24}\,(R_{12}+R_{14}-R_{24})}{8A_3},\\
C_{34,1}&=\frac{\rho\,(R_{23}+R_{24}-R_{34})}{8\hA_1}, & C_{34,2}&=\frac{\rho\,(R_{13}+R_{14}-R_{34})}{8\hA_2}.
\end{align*}
These are the coefficients $C_{e,l}=r_eN_{e,l}/(8A_l)$ of Lemma~\ref{lem:beta}. For the four coefficients with $l\in\{1,2\}$ and $e\ne34$ we used
\[
N_{13,2}=R_{14}+R_{34}-R_{13}=b\,(d_{14}-d_{13}+b\rho^2),\qquad N_{14,2}=b\,(d_{13}-d_{14}+b\rho^2),
\]
\[
N_{23,1}=b\,(d_{24}-d_{23}+b\rho^2),\qquad N_{24,1}=b\,(d_{23}-d_{24}+b\rho^2),
\]
and for $e=34$ we used $r_{34}=b\rho$. In each case the factor $b$ cancels the factor $b$ of $A_1$ or $A_2$.

\subsection*{Analyticity at \texorpdfstring{$b=0$}{b=0}}
At a point with $b=0$ and $\gamma>0$ we have $x=\frac12$, $s=\frac{\sqrt3}2$, $R_e=1$ for $e\ne34$, $\rho^2=\gamma^2-\gamma+1>0$, $\hA_1=\frac{\sqrt3}4$, $\hA_2=-\frac{\sqrt3}4\gamma$, $A_3=-A_4=\frac{\sqrt3}4$ and $\hat\delta=\rho^{-3}$. So every denominator above is nonzero and every square root has a positive argument, and all these quantities are real-analytic in a neighbourhood of the point.

These are the formulas used in the chart computation, and check~(3) of Section~\ref{sec:results} compares them with the direct formulas in computer algebra.

\section*{Data and code availability}
The programs, the certificate files, the logs and the Lean formalization are available at
\begin{center}
\url{https://github.com/tejasvi/lean4body}\\
release \texttt{v1.0}, commit \texttt{58ac0fbf4d66},\\
archived at \url{https://doi.org/10.5281/zenodo.23004297}.
\end{center}
The repository contains the program of Section~\ref{sec:procedure} with the certificate files and the logs of the runs in Table~\ref{tab:runs}; the scripts of the checks of Section~\ref{sec:results}, of Figure~\ref{fig:lmax}, of the computer-algebra computation in the proof of Proposition~\ref{prop:corner} and of the observations after Conjecture~\ref{conj:sharp}; the second implementation of Section~\ref{sec:indep}, with its specification, the record of its development and its logs; and the Lean formalization of Section~\ref{sec:lean}. The computations use Python~3.14.4 with \texttt{python-flint}~0.9.0 (Arb), \texttt{mpmath}~1.3.0, \texttt{sympy}~1.14.0 and \texttt{numpy}~2.5.3, and the formalization uses Lean~4.35.0-rc3 and Mathlib at the tag \texttt{v4.35.0-rc3}. The file \texttt{README.md} of the repository describes the files, gives the SHA-256 hashes of the programs and the certificate files, and gives the commands that repeat the computations and check the formal proof.

\section*{Use of AI tools}
Generative AI tools were used to find and prove the results, to write both programs of Section~\ref{sec:cap} and the Lean formalization, and to draft the text.

\end{document}